
\documentclass[12pt]{article}
\usepackage[centertags,intlimits]{amsmath}                     
\usepackage{color}\usepackage{amsfonts}                     
\usepackage{amsthm}
\usepackage{graphics}
\allowdisplaybreaks[2]
\setlength{\textwidth}{6.5in}
\setlength{\textheight}{9in}
\setlength{\oddsidemargin}{0in}
\setlength{\topmargin}{-0.25in}
\setlength{\headheight}{0in}

\numberwithin{equation}{section}
\newtheorem{theorem}{Theorem}[section]
\newtheorem{lemma}{Lemma}[section]

\theoremstyle{remark}
\newtheorem{remark}{Remark}[section]

\providecommand{\abs}[1]{\lvert #1\rvert}
\providecommand{\norm}[1]{\lVert #1\rVert}

\newcommand{\nc}{\newcommand}
\nc{\vb}{\mathbf{v}}
\nc{\bx}{\mathbf{x}}
\nc{\by}{\mathbf{y}}
\nc{\bz}{\mathbf{z}}
\nc{\bu}{\mathbf{u}}
\nc{\bv}{\mathbf{v}}
\nc{\ba}{\mathbf{a}}
\nc{\bs}{\mathbf{s}}
\nc{\bq}{\mathbf{q}}
\nc{\bd}{\mathbf{d}}
\nc{\bb}{\mathbf{b}}
\nc{\bc}{\mathbf{c}}
\nc{\bi}{\mathbf{i}}
\nc{\bfr}{\mathbf{r}}
\nc{\bA}{\mathbf{A}}
\nc{\R}{\mathbb R}
\nc{\N}{\mathbb N}
\nc{\C}{\mathbb C}
\nc{\D}{\mathbb D}
\nc{\Z}{\mathbb Z}
\nc{\F}{\mathbf F}
\nc{\bbS}{\mathbb S}
\nc{\B}{\cal B}
\nc{\br}{\bigr}
\nc{\bl}{\bigl}
\nc{\Bl}{\Bigl}
\nc{\Br}{\Bigr}
\nc{\ind}{\mathbf{1}}
\nc{\bP}{\mathbf{P}}

\title{On long term investment optimality}
\author{Anatolii A. Puhalskii \footnote{Email: aapuhalski@aim.com}\\
 Institute for Problems in Information
Transmission}
\begin{document}

\maketitle
\sloppy
\begin{abstract}
We study the problem of optimal long term
investment with a view to beat a benchmark for a diffusion model of
asset prices.
Two kinds of objectives are considered. One criterion
 concerns the probability of
outperforming the benchmark and
 seeks  either to  minimise the decay rate of the
probability that a portfolio exceeds the benchmark or to maximise
the decay rate that the portfolio falls short. The other criterion concerns
the growth rate of the risk--sensitive  utility of wealth which has to
be either minimised, for a risk--averse investor, or maximised, for
a risk--seeking investor.
It is assumed that the
mean returns and volatilities of the securities are affected by an
economic factor,  possibly, in a nonlinear fashion. The economic factor
and the benchmark are modelled with  general It\^o differential
equations. The results identify optimal  portfolios and produce
the decay, or growth, rates. The portfolios have the form of
time--homogeneous  functions of the economic factor.
 Furthermore,  a uniform treatment is given to
 the out--
and under-- performance probability optimisation as well as
to  the risk--averse and risk--seeking
 portfolio optimisation.
It is shown that there exists
 a portfolio that optimises the decay
rates  of both the outperformance probability and the
underperformance probability. 
While earlier research on the subject has relied, for the most part, on the techniques of
  stochastic optimal control and dynamic programming,    
in this contribution the quantities of interest are studied
directly by employing  the methods of the large
deviation theory. 
The
key to the analysis
is to recognise  the setup in question as a case of coupled diffusions
with time scale separation, with the economic factor representing
''the fast motion''.
\end{abstract}

\section{Introduction}
\label{sec:introduction}

Recently, two approaches have emerged
 to constructing long--term optimal portfolios
for diffusion models of asset prices:
 optimising the risk--sensitive criterion and optimising
the probability of outperforming a benchmark. In the risk--sensitive
framework, one is concerned with the
 expected utility of wealth  $\mathbf Ee^{\lambda
 \ln Z_t}$\,, where $Z_t$ represents the portfolio's wealth at time $t$
and $\lambda$ is the risk--sensitivity parameter, also referred to as
a Hara parameter, which expresses the
investor's degree of risk aversion if $\lambda<0$ or of risk--seeking if
$\lambda>0$\,. When trying to beat the benchmark, $Y_t$, the expected
utility of wealth is given by 
$\mathbf Ee^{\lambda
 \ln( Z_t/Y_t)}$\,. 
Since typically those expectations grow, or decay, at an
exponential rate with $t$\,, one is   led to optimise that rate,
so an optimal portfolio for the risk--averse
investor (respectively, for the risk--seeking investor) is defined as
 the one that minimises (respectively, maximises) the limit, assuming
 it exists, of  
 $(1/t)\ln\mathbf Ee^{\lambda
 \ln( Z_t/Y_t)}$\,, as
 $t\to\infty$\,.
In a similar vein, there are two ways to define the criterion 
 when the objective is to outperform the benchmark. One
can either choose the limit of $(1/t)\ln \mathbf
P(\ln(Z_t/ Y_t)\le0)$\,, as $t\to\infty$\,, as the quantity to be minimised
or the limit of $(1/t)\ln \mathbf P(\ln(Z_t/ Y_t)\ge0)$
as the quantity to be maximised. Arguably, the former criterion is
favoured by the risk--averse investor and the latter, by the 
risk--seeking one.  
More generally, one may look at the limits of $(1/t)\ln \mathbf
P(\ln(Z_t/ Y_t)\le q)$ or of $(1/t)\ln \mathbf P(\ln(Z_t/ Y_t)\ge q)$\,, for
some threshold  $q$\,.

Risk--sensitive
optimisation  has received considerable attention in the literature and
 has been studied under various sets of hypotheses. Bielecki and Pliska
\cite{BiePli99}
 consider a setting with constant volatilitities and
with  mean returns of the 
securities being affine functions of an economic factor, which is
modelled as a
Gaussian process that satisfies
a  linear stochastic differential equation with constant diffusion
coefficients. For the risk--averse investor, they find an
asymptotically optimal
portfolio and the long term growth rate of the expected utility of
wealth.
Subsequent research has relaxed some of the assumptions made, such as the
independence of the diffusions driving the economic factor process and the
asset price process, see Kuroda and Nagai \cite{KurNag02}, Bielecki and
Pliska \cite{BiePli04}. Fleming
 and Sheu  \cite{FleShe00}, \cite{FleShe02}  analyse both the
 risk--averse and the risk--seeking setups.
A benchmarked setting is studied by Davis and Lleo \cite{DavLle08},
 \cite{DavLle11}, \cite{DavLle13}, the latter two papers
 being concerned with diffusions with jumps as driving processes.
Nagai \cite{Nag03}  
assumes general mean returns and volatilities
 and the factor process
being the solution to a general stochastic differential equation and
obtains an optimal portfolio for the risk--averse investor when there
is no benchmark involved. 
Special one--dimensional models are treated in Fleming
 and Sheu 
\cite{FleShe99} and Bielecki, Pliska, and Sheu \cite{BiePliShe05}.
The methods of the aforementioned papers rely on the tools of stochastic
optimal control. A Hamilton--Jacobi--Bellman equation 
is invoked in order to identify a
portfolio that minimises
 the expected
utility of wealth on a finite horizon. Afterwards, a limit
is taken as the
length of time goes to infinity. The optimal portfolio is expressed in
terms of a solution to a Riccati algebraic equation in the affine
case, and to an ergodic Bellman equation, in the general case.

The criterion of the probability of outperformance is
considered in Pham \cite{Pha03}, who studies a one--dimensional
benchmarked setup. The
minimisation of the underperformance probability 
 for   the Bielecki and
Pliska \cite{BiePli99} model is addressed 
in Hata, Nagai, and Sheu \cite{Hat10}, who look at a  no benchmark
setup.  Nagai \cite{Nag12} studies the 
general model with the riskless asset as the benchmark.  
Those authors build on the foundation laid by the work
on the risk--sensitive optimisation by applying stochastic control methods in
order to  identify an optimal
risk--sensitive portfolio, 
first, and, afterwards, use duality considerations 
to optimise the probabilities of out/under performance.
The  risk--sensitive optimal
portfolio for an appropriately chosen risk--sensitivity parameter is found
to be optimal for the out/under performance probability criterion,
although a proof of that fact
 is missing  for the general model in Nagai \cite{Nag12}. The parameter
is between zero and one for the outperformance case and is negative, for the
underperformance case.
Puhalskii \cite{Puh11}  analyses the out/under performance
probabilities 
directly and
obtains a portfolio that is
asymptotically optimal both for the outperformance and
underperformance probabilities, the limitation of their study being
that it  is confined to a
geometric Brownian motion model of the asset prices with no economic
factor involved. Puhalskii and Stutzer \cite{PuhStu16} study the
underperformance probability  for
  the model in Nagai \cite{Nag12} with a general benchmark 
by aplying direct methods.
Their results imply that the portfolio found in Nagai \cite{Nag12}
is optimal.

Whereas the cases of a negative Hara parameter for risk--sensitive
optimisation  and of the underperformance probability
minimisation 
seem to be fairly well understood, the setups of a positive Hara
parameter for risk--sensitive optimisation 
and of the outperformance probability optimisation
 are lacking
clarity. The reason seems to be twofold. Firstly,
 the expected utility of wealth
may  grow at an infinite exponential rate for certain $\lambda\in[0,1]$\,, see
  Fleming and Sheu \cite{FleShe02}. Secondly, the analysis of the ergodic
  Bellman equation presents difficulty because   no
  Lyapunov function is readily available, cf., condition (A3) in Kaise and Sheu
  \cite{KaiShe06}. 
 Although Pham \cite{Pha03} carries out a
  detailed study  and identifies the threshold value
  of $\lambda$ when ''the blow--up'' occurs for an affine
 model of one security and
  one factor,  for the
  multidimensional case, 
we are unaware of  results that produce asymptotically optimal portfolios 
either for the risk--sensitive criterion with a positive Hara parameter or
for maximising  the outperformance probability.

The purpose of this paper is to fill in the aforementioned gaps.
As in   Puhalskii and Stutzer \cite{PuhStu16},
we study  the benchmarked version of the general
  model introduced in Nagai \cite{Nag03},
\cite{Nag12}. Capitalising on the insights in Puhalskii and Stutzer
\cite{PuhStu16},
  we identify an optimal
portfolio for maximising the outperformance probability.
For the risk--sensitive setup, we prove that
 there
is a threshold value  $\overline\lambda\in(0,1]$ such that for all 
$\lambda<\overline\lambda$
 there exists an asymptotically optimal 
risk--seeking
portfolio.
It is 
arrived at   as an  optimal outperformance portfolio 
 for  certain  threshold
$q$\,.
 If $\lambda>\overline\lambda$\,, there is a portfolio such that 
the expected utility of
wealth grows at an infinite exponential rate.   
 Furthermore, we 
give a uniform treatment 
 to the out--
and under-- performance probability optimisation
as well as to the risk--averse and risk--seeking
 portfolio optimisation.
Not only is that of methodological value, but the proofs for the case of
a positive Hara parameter rely on the optimality 
 properties of a  portfolio with a negative Hara parameter. We
show  that the
 same portfolio optimises both the underperformance and outperformance
 probabilities,  in line with conclusions in Puhalskii
 \cite{Puh11}. Similarly, the same procedure can be used for finding
 optimal risk--sensitive portfolios both for the risk--averse investor
 and for the
 risk--seeking investor.
As in Nagai \cite{Nag03,Nag12} and Puhalskii and Stutzer \cite{PuhStu16},  the 
portfolios are expressed in terms of solutions to
  ergodic Bellman equations.

Since we use the methods of Puhalskii and Stutzer \cite{PuhStu16}, 
  no stochastic
control techniques are invoked
 and  standard tools of large deviation theory are employed,
such as a change of a probability measure and an exponential Markov inequality.
The key is to recognise that one deals with  a case of coupled diffusions
with time scale separation and introduce the empirical measure of the
factor process which is ''the fast motion''. 
 Another notable feature   is an extensive use of the
minimax theorem and a characterisation of the optimal portfolios in terms
of  saddle points. 
Being more direct than the one based on  the
 stochastic optimal control theory, this approach streamlines 
 considerations,  e.g., there is no need to contend
with  a  Hamilton--Jacobi--Bellman equation on finite time,  thereby
enabling us 
 both to obtain new results and 
  relax or drop altogether 
a number of assumptions present in the earlier  research on the subject.
 For instance, 
we do not restrict the class of portfolios under consideration to
 portfolios whose total wealth is a sublinear
function of  the economic factor, nor do we require that
   the limit growth rate of the expected
utility of wealth 
  be
 an essentially smooth (or ''steep'')
 function of the Hara parameter, which conditions are needed in 
Pham \cite{Pha03}
even for a one--dimensional model.
On the other hand, when  optimizing the underperformance
probability and  when 
optimizing the risk--sensitive criterion  with a negative Hara
parameter, we produce $\epsilon$--asymptotically optimal portfolios, rather
than asymptotically optimal portfolios as in Hata, Nagai, and Sheu
 \cite{Hat10} and in Nagai \cite{Nag12}, which distinction does not
 seem to be of  great  significance.
Besides, our conditions seem to be less restrictive.

 The proofs of certain saddle--point 
properties  for positive Hara parameters
 relying on the associated properties for negative Hara parameters,
 this paper includes a substantial portion of the
developments in Puhalskii and Stutzer \cite{PuhStu16}. 
The presentation, however, is self--contained and does not
 depend on any of
the results of Puhalskii and Stutzer \cite{PuhStu16}.
This is how this paper is organised. In Section \ref{sec:model}, we
define the model and state the main results.
In addition, more detail is given on the relation to earlier work. 
 The proofs are provided in Section \ref{sec:proof-bounds}
whereas Section \ref{sec:prelim} and the
 appendix are concerned with laying the groundwork and shedding
 additional light on  the model of Pham \cite{Pha03}.

\section{A model description and  main results}
\label{sec:model}

We start by recapitulating the setup of Puhalskii and Stutzer
\cite{PuhStu16}. 
One is concerned with a portfolio consisting of $n$ risky securities priced
$S^1_t,\ldots,S^n_t$ at time $t$ and a safe security of price $S^0_t$ at
time $t$\,.
We assume that, 
 for $i=1,2,\ldots,n$,
\begin{equation*}
  \dfrac{dS^i_t}{S^i_t}= a^i(X_t)\,dt+{b^i(X_t)}^T\,dW_t
\end{equation*}
and that 
\begin{equation*}
\frac{dS^0_t}{S^0_t}=r(X_t)\,dt\,,
\end{equation*}
where $X_t$ represents an economic factor.
It is governed by the  equation 
\begin{equation}
  \label{eq:14}
  dX_t=\theta(X_t)\,dt+\sigma(X_t)\,dW_t\,.
\end{equation}
In the equations above, the
$a^i(x)$ are real-valued functions, the $b^i(x)$ are 
$\R^k$-valued functions,
 $\theta(x)$ is  an $\R^l$-valued function,
$\sigma(x)$ is an 
$l\times k$-matrix,
$W_t$ is a $k$-dimensional standard Wiener process, and $S^i_0>0$\,,
${}^T$ is used to denote the transpose of a matrix or a vector.
Accordingly, the process $X=(X_t\,, t\ge0)$ is $l$-dimensional.

Benchmark $Y=(Y_t\,,t\ge0)$ follows an equation similar to
those for the risky securities:
\begin{equation*}
  \dfrac{dY_t}{Y_t}=\alpha(X_t)\,dt+\beta(X_t)^T\,dW_t,
\end{equation*}
where $\alpha(x)$ is an $\R$-valued function, $\beta(x)$ is an
$\R^k$-valued function, and $Y_0>0$\,.

 All processes
 are defined on a complete probability space
 $(\Omega,\mathcal{F},\mathbf{P})$\,. 
It is assumed, furthermore, 
that the processes $S^i=(S^i_t\,,t\ge0)$\,, $X$\,, and
$Y=(Y_t\,,t\ge0)$ are adapted 
 to (right--continuous)
 filtration $\mathbf{F}=(\mathcal{F}_t\,,t\ge0)$ and that
 $W=(W_t\,,t\ge0)$ is an   $\mathbf{F}$-Wiener process.

We let $a(x)$ denote the $n$-vector with entries  $a^1(x),\ldots,a^n(x)$,
let $b(x)$ denote the $n\times k$ matrix with rows
${b^1(x)}^T,\ldots,{b^n(x)}^T$ and let
$\mathbf{1}$ denote the $n$-vector with unit entries.  
 The matrix functions
 $b(x)b(x)^T$ and $\sigma(x)\sigma(x)^T$ are assumed to be
uniformly positive definite and bounded.
 The functions 
${a(x)}$\,,    ${r(x)}$\,,  ${\theta(x)}$\,,
  $\alpha(x)$\,,  $b(x)$\,, $\sigma(x)$\,,     and $\beta(x)$
are assumed to be continuously differentiable with bounded derivatives
  and the function $\sigma(x)\sigma(x)^T$ is assumed to be twice continuously
 differentiable.
In addition,  the following ''linear growth'' condition is assumed:
for some $K>0$ and all $x\in \R^l$\,,
\begin{equation*}
\abs{a(x)}+\abs{r(x)}+\abs{\alpha(x)}+
    \abs{\theta(x)}\le K(1+\abs{x})\,.
\end{equation*}
The function  $\abs{\beta(x)}^2$  is assumed to be  bounded and
 bounded
away from zero. 
(We will also indicate how the results change if the benchmark ''is not
volatile'' meaning  that $\beta(x)=0$\,.)
Under those hypotheses, the processes $S^i$\,, $X$\,, and $Y$ are well
defined, see, e.g., chapter 5 of Karatzas and Shreve \cite{KarShr88}.

For the factor process,
we assume that 
\begin{equation}
  \label{eq:45}
  \limsup_{\abs{x}\to\infty}\,
\theta(x)^T\,\frac{ x}{\abs{x}^2}<0\,.
\end{equation}
Thus, $X$ has a unique invariant measure, see, e.g., Bogachev,
Krylov, and R\"ockner \cite{BogKryRoc}. 
As for the initial condition, 
we will assume that
\begin{equation}
  \label{eq:77}
\mathbf Ee^{\gamma\abs{X_0}^2}<\infty\,, \text{ for some }\gamma>0\,.
\end{equation}
Sometimes it will be required that $\abs{X_0}$ be,  moreover,
bounded.

The investor holds $l^i_t$ shares of risky security $i$ and $l^0_t$ shares
 of the safe security at time $t$\,,
 so the total wealth
  is given by
$Z_t=\sum_{i=1}^nl^i_tS^i_t+l^0_tS^0_t$\,.
Portfolio
$  \pi_t=(\pi^1_t,\ldots,\pi^n_t)^T$
specifies the proportions of the total wealth invested in the risky
securities so that, for $i=1,2,\ldots,n$,
$l^i_tS^i_t=\pi^i_t Z_t$\,.
The processes $\pi^i=(\pi^i_t\,,t\ge0)$ are assumed to be
$(\mathcal{B}\otimes\mathcal{F}_t,\,t\ge0)$--progressively measurable, where $\mathcal{B}$
denotes the Borel $\sigma$--algebra on $\R_+$,
 and such that
$\int_0^t{\pi^i_s}^2\,ds<\infty$ a.s. We do not impose 
 any other restrictions on the magnitudes of the $\pi^i_t$ so that
unlimited borrowing and shortselling are allowed.

Let
\begin{equation*}
    L^\pi_t=\frac{1}{t}\,\ln\bl(\frac{Z_t}{Y_t}\br)\,.
\end{equation*}

Since
the amount of wealth invested in the safe
security is $(1-\sum_{i=1}^n \pi^i_t)Z_t$\,, in a standard fashion
by using the self--financing condition, one obtains that
\begin{equation*}
  \dfrac{dZ_t}{Z_t}=\sum_{i=1}^n\pi^i_t\,\dfrac{dS^i_t}{S^i_t}+
\bl(1-\sum_{i=1}^n\pi^i_t\br)\,\dfrac{dS^0_t}{S^0_t}\,.
\end{equation*}
Assuming that $Z_0=Y_0$ and letting $c(x)=b(x)b(x)^T$\,,
 we have
by It\^o's lemma that, cf. Pham \cite{Pha03},
\begin{multline}
\label{eq:1a}
  L^\pi_t=
\frac{1}{t}\,\int_0^t
\bl(\pi_s^T a(X_s)+(1-\pi_s^T \ind)r(X_s)
-\frac{1}{2}\,\pi_s^T c(X_s)\pi_s-\alpha(X_s)
+\frac{1}{2}\,\abs{\beta(X_s)}^2\br)\,ds\\+
\frac{1}{t}\,\int_0^t\bl(b(X_s)^T\pi_s -\beta(X_s)\br)^T
\,dW_s\,.
\end{multline}
One can see that $L^\pi_t$ is ''of order one'' for $t$
great. Therefore, if one embeds 
the probability of outperformance
$\mathbf P(\ln(Z_t/Y_t)\ge0)$ (respectively, the probability of
underperfomance
$\mathbf P(\ln(Z_t/Y_t)\le0)$) into  the parameterised family of
probabilities $\mathbf P(L^\pi_t\ge q)$ (respectively, 
$\mathbf P(L^\pi_t\le q)$)\,, one will concern themselves with large
deviation probabilities.

Let, for $u\in\R^n$ and $x\in\R^l$\,, 
\begin{subequations}
  \begin{align}
      \label{eq:4}
  M(u,x)&=u^T (a(x)- r(x)\ind )
-\frac{1}{2}\,u^T c(x)u+r(x)-\alpha(x)
+\frac{1}{2}\,\abs{\beta(x)}^2\intertext{ and }
  \label{eq:8}
N(u,x)&= b(x)^Tu-\beta(x)\,.
\end{align}
\end{subequations}
A change of variables brings    (\ref{eq:1a}) to the form
\begin{equation}
    \label{eq:5}
  L^\pi_t= 
\int_0^1 M(\pi_{ts},X_{ts})\,ds
+\frac{1}{\sqrt{t}}\,\int_0^1 N(\pi_{ts},X_{ts})^T\,dW_{s}^t\,,
\end{equation}
where 
$W^t_s=W_{ts}/\sqrt{t}$\,.
We note that $W^t=(W^t_s,\,s\in[0,1])$ is a Wiener process relative to
$\mathbf{F}^t=(\mathcal{F}_{ts},\,s\in[0,1])$\,. The righthand side of
\eqref{eq:5}  can
be viewed as a
diffusion process with a small diffusion coefficient which
lives in ''normal time'' represented by the variable $s$\,, 
whereas in $X_{ts}$ and $\pi_{ts}$ ''time'' is
 accelerated by a factor of $t$\,.
Furthermore,
on introducing $\pi^t_s=\pi_{ts}$\,, $X^t_s=X_{ts}$\,, assuming
 that, for suitable function $u(\cdot)$\,,
 $\pi^t_s=u(X^t_s)$\,,
defining \begin{equation}
  \label{eq:37}
  \Psi^t_s=\int_0^s M( u(X_{\tilde s}^t),X^t_{\tilde s})\,d\tilde s
+\frac{1}{\sqrt{t}}\,\int_0^s N(
 u(X_{\tilde s}^t),X_{\tilde s}^t)^T\,dW_{\tilde s}^t\,,
\end{equation}
so that $L^\pi_t=\Psi^t_1$\,,
and  writing \eqref{eq:14} as
\begin{equation}
  \label{eq:5'}
  X^t_s=X^t_0+t\int_0^s\theta(X^t_{\tilde s})\,d\tilde s+\sqrt{t}
\int_0^s\sigma(X^t_{\tilde s})\,dW^t_{\tilde s}\,,  
\end{equation}
one can see that
\eqref{eq:37} and \eqref{eq:5'} make up a similar system of equations
to those studied in Liptser \cite{Lip96} and in Puhalskii \cite{Puh16}.
The following heuristic derivation which is based on the Large Deviation
Principle in Theorem 2.1 in Puhalskii \cite{Puh16} provides insight
into our results below.

Let us introduce additional pieces of notation first.
 Let
  $\mathbb{C}^2$
represent the set of real--valued 
twice continuously differentiable
 functions on $\R^l$\,.
For $f\in\mathbb C^2$\,, we let $\nabla f(x)$ represent the gradient
of $f$ at $x$ which is regarded as a column $l$--vector and we let
$\nabla^2f(x)$ represent the $l\times l$--Hessian matrix of $f$ at $x$\,.
 Let $\mathbb C^1_0$ and $\mathbb C^2_0$
represent the sets of functions of compact support on $\R^l$ that are
 once and twice continuously differentiable, respectively.
Let  $\mathbb P$ denote
 the set of probability densities $m=(m(x)\,,x\in\R^l)$ on
$\R^l$ such that $\int_{\R^l}\abs{x}^2\,m(x)\,dx<\infty$ and let
 $\hat{\mathbb{P}}$ denote the set of probability densities
$m$ from  $\mathbb{P}$ such that
 $m\in\mathbb{W}^{1,1}_{\text{loc}}(\R^l)$ 
 and  $\sqrt{m}\in\mathbb{W}^{1,2}(\R^l)$\,, where $\mathbb W$
 is used for denoting a Sobolev space, see, e.g., Adams and Fournier
 \cite{MR56:9247}. 
Let $\mathbb C([0,1],\R)$ represent the set of continuous real--valued
functions on $[0,1]$ being endowed with the uniform topology and let 
$\mathbb C_\uparrow([0,1],\mathbb M(\R^l))$ represent the set of
 functions $\mu_t$ on $[0,1]$ with values in the set
$\mathbb M(\R^l)$ of (nonnegative)
 measures on $\R^l$ such that $\mu_t(\R^l)=t$ and $\mu_t-\mu_s$ is a
nonnegative measure when $t\ge s$\,. The space 
$\mathbb M(\R^l)$ is assumed to be equipped with the weak topology
and the space $\mathbb C_\uparrow([0,1],\mathbb M(\R^l))$\,, with
  the uniform topology.
Let  the empirical process of
$X^t=(X_{s}^t\,, s\in[0,1])$\,, which is denoted by
$\mu^t=(\mu^t(ds,dx))$\,, be 
defined by the equation
\begin{equation*}
  \mu^t([0,s],\Gamma)=\int_0^s\chi_{\Gamma}(X_{t\tilde s})\,d\tilde s\,,
\end{equation*}
with $\Gamma$ denoting a Borel subset of $\R^l$
and with
 $\chi_{\Gamma}(x)$ 
representing the indicator function of  $\Gamma$\,.
We note that both $X^t$ and $\pi^t=(\pi^t_s,s\in[0,1])$ are
$\mathbf{F}^t$-adapted.

If one were to apply  to the processes $\Psi^t=(\Psi^t_s\,, s\in[0,1])$
and $\mu^t$  Theorem 2.1 in Puhalskii \cite{Puh16}, then
 the pair
$(\Psi^t,\mu^t)$ would satisfy the Large Deviation Principle in 
$\mathbb C([0,1])\times \mathbb C_\uparrow([0,1],\mathbb
M_1(\R^l))$\,, as $t\to\infty$\,,
 with the  deviation function (usually referred to as a rate function)
\begin{multline}
  \label{eq:41''}
  \mathbf{ J}(\Psi,\mu)=
\int_0^1
\sup_{\lambda\in\R}\Bl(
\lambda\bl(\dot{\Psi}_s-
\int_{\R^l}M(u(x),x)\,m_s(x)\,dx\br)
-\frac{1}{2}\,\lambda^2 \int_{\R^l}\abs{N(u(x),x)}^2\,m_s(x)\,dx\\
+\sup_{f\in \mathbb{C}_0^1}\int_{\R^l}\Bl( \nabla  f(x)^T \bl(
\frac{1}{2}\,\text{div}\,\bl(\sigma(x)\sigma(x)^Tm_s(x)\br)-\bl(
\theta(x)+\lambda\sigma(x)^T N(u(x),x)\br)m_s(x)
\br)
\\-\frac{1}{2}\,
\abs{\sigma(x)^T\nabla   f(x)}^2\,m_s(x)\Br)\,dx\Br)\,ds\,,
\end{multline}
provided the  function 
$\Psi=(\Psi_s,\,s\in[0,1])$ 
 is absolutely continuous w.r.t.  Lebesgue measure on $\R$  and
the function $\mu=(\mu_s(\Gamma))$\,,
when considered as a measure on $[0,1]\times\R^l$\,, is
absolutely continuous w.r.t.  Lebesgue measure on $\R\times \R^l$\,, i.e.,
 $\mu(ds,dx)=m_s(x)\,dx\,ds$\,, where
   $m_s(x)$\,,   as a function of $x$\,, belongs to 
$\hat{\mathbb{P}}$
  for almost all $s$\,.
If those conditions do not hold, then 
$  \mathbf{ J}(\Psi,\mu)=\infty$\,. 
(We assume that the divergence of a square matrix 
is evaluated rowwise.)

Integration by parts yields an alternative form:
\begin{multline}
  \label{eq:116}
  \mathbf{ J}(\Psi,\mu)=
\int_0^1
\sup_{\lambda\in\R}\Bl(
\lambda\bl(\dot{\Psi}_s-
\int_{\R^l}M(u(x),x)\,m_s(x)\,dx\br)
-\frac{1}{2}\,\lambda^2 \int_{\R^l}\abs{N(u(x),x)}^2\,m_s(x)\,dx\\
+\sup_{f\in \mathbb{C}_0^2}\int_{\R^l}\Bl(
-\,\frac{1}{2}\,\text{tr}\,\bl(\sigma(x)\sigma(x)^T\nabla^2f(x)\br)-
\nabla f(x)^T(\theta(x)+\lambda\sigma(x)^T N(u(x),x))
\\-\frac{1}{2}\,
\abs{\sigma(x)^T\nabla   f(x)}^2\,\Br)m_s(x)\,dx\Br)\,ds\,,
\end{multline}
with $\text{tr}\, \Sigma$ standing for the trace of  square matrix 
$\Sigma$\,.
Since $L^{ \pi}_t=\Psi^t_1$\,,
by projection,  $L^{\pi}_t$ obeys the large deviation
principle in $\R$
 for rate $t$ with the deviation function 
$\mathbf{ I}(L)=\inf\{ \mathbf{ J}(\Psi,\mu):\;
\Psi_1=L\,\}$\,. 
Therefore, 
\begin{equation}
  \label{eq:111}
  \limsup_{t\to\infty}\frac{1}{t}\,\ln \mathbf P(L^\pi_t\ge q)\le
-\inf_{L\geq q}\mathbf I(L)\,.
\end{equation}
The integrand against $ds$ in \eqref{eq:116} being a convex function of
$\dot\Psi_s$ and of $m_s(x)$\,, along with the requirements that
$\int_0^1\dot\Psi_s\,ds=L$ and $\int_{\R^l}m_s(x)\,dx=1$
imply, by Jensen's inequality, that one may assume that $\dot\Psi_s=L$
and that $m_s(x)$ does not depend on $s$ either, so 
that $m_s(x)=m(x)$\,.
Hence,
\begin{multline*}
  \inf_{L\ge q}\mathbf I(L)=\inf_{L\ge q}
\inf_{m\in\hat{\mathbb P}}\sup_{\lambda\in\R}\Bl(
\lambda\bl(L-
\int_{\R^l}M(u(x),x)\,m(x)\,dx\br)
-\frac{1}{2}\,\lambda^2 \int_{\R^l}\abs{N(u(x),x)}^2\,m(x)\,dx\\
+\sup_{f\in \mathbb{C}_0^2}\int_{\R^l}\Bl(
-\,\frac{1}{2}\,\text{tr}\,\bl(\sigma(x)\sigma(x)^T\nabla^2f(x)\br)
-\nabla f(x)^T(\theta(x)+\lambda\sigma(x)^T N(u(x),x))
\\-\frac{1}{2}\,
\abs{\sigma(x)^T\nabla   f(x)}^2\Br)\,m(x)\,dx\Br)\,.
\end{multline*}
On noting that the expression on the righthand side is convex in
$(L,m)$ and is concave in $(\lambda,f)$\,, one  hopes to be able to
apply a minimax theorem to change the order of taking
$\inf $ and $\sup$ so that
\begin{multline}
\label{eq:118} 
   \inf_{L\ge q}\mathbf I(L)=\sup_{\lambda\in\R}\sup_{f\in \mathbb{C}_0^2}\inf_{L\ge q}
\inf_{m\in\hat{\mathbb P}}\Bl(
\lambda\bl(L-
\int_{\R^l}M(u(x),x)\,m(x)\,dx\br)
-\frac{1}{2}\,\lambda^2 \int_{\R^l}\abs{N(u(x),x)}^2\,m(x)\,dx\\
+\int_{\R^l}\Bl(
-\,\frac{1}{2}\,\text{tr}\,\bl(\sigma(x)\sigma(x)^T\nabla^2f(x)\br)
-\nabla f(x)^T(\theta(x)+\lambda\sigma(x)^T N(u(x),x))
\\-\frac{1}{2}\,
\abs{\sigma(x)^T\nabla   f(x)}^2\Br)\,m(x)\,dx\Br)\,.
\end{multline}
If $\lambda<0$\,, then the infimum over $L\ge q$ equals $-\infty$\,.
If $\lambda\ge0$\,, it
is attained at $L=q$  and $\inf_{m\in\hat{\mathbb P}}$ ''is 
attained at a $\delta$--density''
so that \eqref{eq:118} results in
\begin{multline}
  \label{eq:119}
      \inf_{L\ge q}\mathbf I(L)=\sup_{\lambda\in\R_+}\sup_{f\in \mathbb{C}_0^2}
\Bl(
\lambda q-\sup_{x\in\R^l}\bl(
\lambda M(u(x),x)
+\frac{1}{2}\,\lambda^2 \abs{N(u(x),x)}^2\,\\
+\frac{1}{2}\,\text{tr}\,\bl(\sigma(x)\sigma(x)^T\nabla^2f(x)\br)
+\nabla f(x)^T(\theta(x)+\lambda\sigma(x)^T N(u(x),x))
+\frac{1}{2}\,
\abs{\sigma(x)^T\nabla   f(x)}^2\br)\Br)\,.
\end{multline}
For an optimal outperforming portfolio, one wants to maximise the 
righthand side of
\eqref{eq:111} over functions $u(x)$\,, so the righthand side of
\eqref{eq:119} has to be minimised. Assuming one can apply
minimax considerations once again yields
\begin{multline*}
  \inf_{u(\cdot)}\inf_{L\ge q}\mathbf I(L)=
\sup_{\lambda\in\R_+}\sup_{f\in \mathbb{C}_0^2}
\Bl(
\lambda q-\sup_{x\in\R^l}\sup_{u\in\R^n}\bl(
\lambda M(u,x)
+\frac{1}{2}\,\lambda^2 \abs{N(u,x)}^2\,\\
+\nabla f(x)^T(\theta(x)+\lambda\sigma(x)^T N(u,x))\br)
+\frac{1}{2}\,\text{tr}\,\bl(\sigma(x)\sigma(x)^T\nabla^2f(x)\br)
+\frac{1}{2}\,
\abs{\sigma(x)^T\nabla   f(x)}^2\Br)\,.
\end{multline*}
 By \eqref{eq:4} and \eqref{eq:8}, the $\sup_{u\in\R^n}=\infty$ if
$\lambda>1$ so, on recalling \eqref{eq:111}, it is reasonable to
 conjecture that
\begin{multline}
  \label{eq:125}
\sup_{\pi}  \limsup_{t\to\infty}\frac{1}{t}\,\ln \mathbf P(L^\pi_t\ge q)=-
\sup_{\lambda\in[0,1]}\sup_{f\in \mathbb{C}_0^2}
\Bl(
\lambda q-\sup_{x\in\R^l}\sup_{u\in\R^n}\bl(
\lambda M(u,x)
+\frac{1}{2}\,\lambda^2 \abs{N(u,x)}^2\,\\
+\nabla f(x)^T(\theta(x)+\lambda\sigma(x)^T N(u,x))\br)
+\frac{1}{2}\,\text{tr}\,\bl(\sigma(x)\sigma(x)^T\nabla^2f(x)\br)
+\frac{1}{2}\,
\abs{\sigma(x)^T\nabla   f(x)}^2\Br)
\end{multline}
and an optimal portfolio is of the form $u(X_t)$\,, with $u(x)$
attaining the supremum with respect to $u$ on the righthand side of
 \eqref{eq:125} for $\lambda$ and $f$ that
deliver their respective suprema. Similar arguments may be applied  to
finding $\inf_{\pi}  \liminf_{t\to\infty}(1/t)\,\ln \mathbf P(L^\pi_t<
q)$\,. Unfortunately, we are unable to fill in the gaps in the above
deduction, e.g., in order for the results of Puhalskii \cite{Puh16} to
apply, the function $u(x)$ has to be bounded in $x$, while the optimal
portfolio  typically is not.
Besides, it is not at all obvious that the optimal portfolio should be
expressed as a function of the economic factor.
Nevertheless, the above line of reasoning is essentially correct,
as our main results  show. Besides,  there is 
a special case which we analyse at the final stages of our proofs that
allows a  direct application of Theorem 2.1 in Puhalskii \cite{Puh16}. 
We now proceed to stating the results.
That requires introducing more pieces of notation and 
providing background information.

The following nondegeneracy
condition is needed. (It was introduced in Puhalskii and Stutzer
\cite{PuhStu16}.)  Let $I_k$ denote the $k\times
  k$--identity matrix and let 
  \begin{equation*}
Q_1(x)=I_k-b(x)^Tc(x)^{-1}b(x)\,.
  \end{equation*}
The matrix $Q_1(x)$ represents the orthogonal 
projection operator onto the null space      of
$b(x)$ in $\R^k$\,.
 We will assume that
\begin{itemize}
\item[(N)]
  \begin{enumerate}
  \item 
 The matrix $\sigma(x)Q_1(x)\sigma(x)^T$ is uniformly
  positive definite.
\item The quantity
  $
  \beta(x)^TQ_2(x)\beta(x)
  $ is bounded away from zero,
where\\
\begin{equation}
  \label{eq:73}
  Q_2(x)=Q_1(x)
\bl(I_k-\sigma(x)^T(\sigma(x)Q_1(x)\sigma(x)^T)^{-1}\sigma(x)\br)
Q_1(x)\,.
\end{equation}

  \end{enumerate}
\end{itemize}
Condition (N) admits the following geometric interpretation.
\begin{lemma}
  \label{le:angle}
The matrix $\sigma(x)Q_1(x)\sigma(x)^T$ 
is uniformly positive definite 
 if and only if
 arbitrary nonzero
vectors from the ranges of $\sigma(x)^T$ and  $b(x)^T$\,, respectively,
 are
 at angles bounded away from zero
if and only if 
the matrix
$c(x)-b(x)\sigma(x)^T(\sigma(x)\sigma(x)^T)^{-1}\sigma(x)  b(x)^T$
is uniformly positive definite. 
Also,      $\beta(x)^TQ_2(x)\beta(x)$  is bounded away from zero
if and only if the projection of $\beta(x)$ onto the null space
 of $b(x)$ is of  length bounded away from zero 
and is at angles bounded away from zero to all 
projections onto that null space of nonzero vectors from
 the range of $\sigma(x)^T$\,.
\end{lemma}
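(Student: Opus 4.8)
The plan is to reduce every condition in the statement to one and the same thing: a uniform-in-$x$ lower bound on the minimal angle $\vartheta(x)$ between the subspaces $\mathrm{range}\,\sigma(x)^T$ and $\mathrm{range}\,b(x)^T$ of $\R^k$, where $\cos\vartheta(x)$ is by definition the supremum of $w^Tz$ over unit vectors $w\in\mathrm{range}\,\sigma(x)^T$ and $z\in\mathrm{range}\,b(x)^T$; such a bound is exactly the assertion that nonzero vectors from the two ranges are at angles bounded away from $0$. Two preliminary identifications set this up. First, since $c(x)=b(x)b(x)^T$ is invertible, $b(x)^Tc(x)^{-1}b(x)$ is symmetric and idempotent with range equal to $\mathrm{range}\,b(x)^T$, so $Q_1(x)=I_k-b(x)^Tc(x)^{-1}b(x)$ is the orthogonal projection of $\R^k$ onto the orthogonal complement of $\mathrm{range}\,b(x)^T$, i.e. onto the null space of $b(x)$, as the text observes. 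Second, $\sigma(x)^T(\sigma(x)\sigma(x)^T)^{-1}\sigma(x)$ is the orthogonal projection of $\R^k$ onto $\mathrm{range}\,\sigma(x)^T$, and hence the third matrix in the lemma equals $b(x)Q_\sigma(x)b(x)^T$, where $Q_\sigma(x):=I_k-\sigma(x)^T(\sigma(x)\sigma(x)^T)^{-1}\sigma(x)$ is the orthogonal projection of $\R^k$ onto the null space of $\sigma(x)$.

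I would then prove the first triple equivalence by computing, for $v\in\R^l$, $v^T\sigma(x)Q_1(x)\sigma(x)^Tv=\abs{Q_1(x)\sigma(x)^Tv}^2=\abs{\sigma(x)^Tv}^2-\abs{b(x)^Tc(x)^{-1}b(x)\sigma(x)^Tv}^2$, the last equality because $Q_1(x)$ is an orthogonal projection. Using that $\sigma(x)\sigma(x)^T$ is uniformly positive definite and bounded, so that $\abs{\sigma(x)^Tv}$ and $\abs v$ are comparable uniformly in $x$, this shows that $\sigma(x)Q_1(x)\sigma(x)^T$ is uniformly positive definite if and only if there is $\delta>0$, independent of $x$, with $\abs{b(x)^Tc(x)^{-1}b(x)\,w}^2\le(1-\delta)\abs w^2$ for all nonzero $w\in\mathrm{range}\,\sigma(x)^T$; since the left side is the squared length of the orthogonal projection of $w$ onto $\mathrm{range}\,b(x)^T$, this is precisely the statement that $\cos\vartheta(x)$ is bounded away from $1$, i.e. $\vartheta(x)$ is bounded away from $0$. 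This angle condition is symmetric in $\sigma^T$ and $b^T$, so running the same computation with the two matrices interchanged — now with $u^Tb(x)Q_\sigma(x)b(x)^Tu=\abs{b(x)^Tu}^2-\abs{\sigma(x)^T(\sigma(x)\sigma(x)^T)^{-1}\sigma(x)b(x)^Tu}^2$ and the uniform ellipticity of $b(x)b(x)^T$ — shows that $b(x)Q_\sigma(x)b(x)^T$ is uniformly positive definite if and only if $\vartheta(x)$ is bounded away from $0$. Combined with the second identification above, this gives the equivalence of the three statements in the first sentence of the lemma.

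For the assertion about $\beta(x)^TQ_2(x)\beta(x)$, I would work under the hypothesis that $\sigma(x)Q_1(x)\sigma(x)^T$ is invertible — which, by the first part, is exactly the condition just characterised, and is what makes $Q_2(x)$ well defined. The crux is the algebraic fact that $P(x):=Q_1(x)\sigma(x)^T(\sigma(x)Q_1(x)\sigma(x)^T)^{-1}\sigma(x)Q_1(x)$ is the orthogonal projection of $\R^k$ onto $\mathrm{range}(Q_1(x)\sigma(x)^T)=Q_1(x)(\mathrm{range}\,\sigma(x)^T)$; this is checked by verifying that $P(x)$ is symmetric, that $P(x)^2=P(x)$, and that $P(x)$ fixes that range. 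Since the range lies inside the null space of $b(x)$ (which equals $\mathrm{range}\,Q_1(x)$), it follows that $Q_2(x)=Q_1(x)-P(x)$ is the orthogonal projection onto the orthogonal complement, within the null space of $b(x)$, of $Q_1(x)(\mathrm{range}\,\sigma(x)^T)$, and that $P(x)Q_1(x)=P(x)$ and $Q_2(x)Q_1(x)=Q_2(x)$. Hence $\beta(x)^TQ_2(x)\beta(x)=\abs{Q_2(x)\beta(x)}^2$ is the squared length of the component of $Q_1(x)\beta(x)$ — the projection of $\beta(x)$ onto the null space of $b(x)$ — orthogonal to $Q_1(x)(\mathrm{range}\,\sigma(x)^T)$, and the orthogonal decomposition $\abs{Q_1(x)\beta(x)}^2=\abs{Q_2(x)\beta(x)}^2+\abs{P(x)\beta(x)}^2$ shows, just as above and using that $\abs{\beta(x)}$ is bounded, that $\beta(x)^TQ_2(x)\beta(x)$ is bounded away from $0$ if and only if both $\abs{Q_1(x)\beta(x)}$ is bounded away from $0$ and $\abs{P(x)\beta(x)}^2\le(1-\delta)\abs{Q_1(x)\beta(x)}^2$ for some $\delta>0$. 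Since $\abs{P(x)\beta(x)}/\abs{Q_1(x)\beta(x)}$ is the cosine of the angle between $Q_1(x)\beta(x)$ and the subspace $Q_1(x)(\mathrm{range}\,\sigma(x)^T)$, and that angle is the infimum of the angles between $Q_1(x)\beta(x)$ and the individual nonzero projections $Q_1(x)w$ of vectors $w\in\mathrm{range}\,\sigma(x)^T$, the pair of conditions is exactly the geometric condition stated in the lemma.

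I do not expect a genuine obstacle here — the lemma is pure linear algebra — but two points need care. The first is keeping all quantifiers uniform in $x$: this is precisely what the standing hypotheses that $b(x)b(x)^T$ and $\sigma(x)\sigma(x)^T$ are uniformly positive definite and bounded and that $\abs{\beta(x)}$ is bounded are used for, since they are what let one pass freely between ``the matrix is uniformly positive definite'' and ``the angle is bounded away from $0$''. The second is the slightly fiddly verification that $P(x)$ above is the orthogonal projection onto $Q_1(x)(\mathrm{range}\,\sigma(x)^T)$ and that $Q_2(x)$ is therefore the complementary projection inside the null space of $b(x)$; it is also worth noting explicitly that the second assertion of the lemma tacitly presumes $\sigma(x)Q_1(x)\sigma(x)^T$ invertible, so that $Q_2(x)$ is defined.
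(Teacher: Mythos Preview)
Your proposal is correct and follows essentially the same route as the paper's proof: both identify $Q_1(x)$ and $I_k-Q_1(x)$ (respectively $Q_\sigma(x)$) as orthogonal projections, use the Pythagorean decomposition $\abs{w}^2=\abs{Pw}^2+\abs{(I-P)w}^2$ together with the uniform ellipticity and boundedness of $\sigma\sigma^T$ and $bb^T$ to pass between uniform positive definiteness and a uniform angle bound, and then invoke the symmetry of the angle condition to obtain the third equivalence; for the second assertion, both recognise $P(x)=Q_1(x)\sigma(x)^T(\sigma(x)Q_1(x)\sigma(x)^T)^{-1}\sigma(x)Q_1(x)$ as the orthogonal projection onto $Q_1(x)(\mathrm{range}\,\sigma(x)^T)$ and read off the geometric characterisation from $Q_2(x)=Q_1(x)-P(x)$. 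Your write-up is somewhat more explicit in naming and verifying the projection identities, but the argument is the same.
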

The proof of the lemma is provided in the appendix.
Under part 1 of condition (N), we have that $k\ge n+l$
and the rows of the matrices $\sigma(x)$ and $b(x)$ are linearly
independent. 
Part 2 of condition (N) implies that $\beta(x)$ does not belong to the
sum of the ranges of $b(x)^T$ and of $\sigma(x)^T$\,. (Indeed, if that
were the case, then $Q_1(x)\beta(x)$\,, which is the projection of
$\beta(x)$ onto the null space of $b(x)$\,,  would also be the projection of a
vector from the range of $\sigma(x)^T$ onto the null space of $b(x)$\,.)
Thus, $k>n+l$\,.

The righthand side of \eqref{eq:125} motivates the
following definitions.
Let, given $x\in\R^l$\,, $\lambda\in\R$\,, and $p\in\R^l$\,, 
\begin{equation}
  \label{eq:40}
\breve  H(x;\lambda,p)= \lambda\sup_{u\in\R^n}\bl(  M(u,x)
+\frac{1}{2}\,\lambda
\abs{N(u,x)}^2+ p^T\sigma(x) N(u,x)\br)+
p^T\theta(x)+\frac{1}{2}\,\abs{{\sigma(x)}^Tp}^2\,.
  \end{equation}
By \eqref{eq:4} and \eqref{eq:8}, 
the latter righthand side is  finite if
$\lambda<1$\,, 
with the supremum being attained at
\begin{equation}
  \label{eq:39}
           u(x)=\frac{1}{1-\lambda}\,c(x)^{-1}\bl(a(x)-r(x)\mathbf1
-\lambda b(x)\beta(x)+b(x)\sigma(x)^Tp\br)\,.
\end{equation}
Furthermore,  
 \begin{multline}
    \label{eq:64}
\sup_{u\in\R^n}\bl( M(u,x)
+\frac{1}{2}\,\lambda
\abs{N(u,x)}^2+p^T\sigma(x) N(u,x)\br)
\\=    \frac{1}{2}\,\frac{1}{1-\lambda}\,
\norm{a(x)-r(x)\mathbf1-\lambda
  b(x)\beta(x)+b(x)\sigma(x)^Tp}^2_{c(x)^{-1}}
\\+\frac{1}{2}\,\lambda\abs{\beta(x)}^2+
r(x)-\alpha(x)+\frac{1}{2}\,\abs{\beta(x)}^2-\beta(x)^T\sigma(x)^Tp\,,
  \end{multline}
where, for $y\in\R^n$ and positive definite symmetric 
$n\times n$--matrix 
$\Sigma$\,, we denote $\norm{y}^2_\Sigma=y^T\Sigma y$\,.

Therefore,
on introducing
\begin{subequations}
  \begin{align}
  \label{eq:84}
  T_\lambda(x)&=\sigma(x)\sigma(x)^T+\frac{\lambda}{1-\lambda}\,\sigma(x)
  b(x)^Tc(x)^{-1}b(x)\sigma(x)^T,\\
\label{eq:84a}
S_\lambda(x)&=\frac{\lambda}{1-\lambda}\,(a(x)-r(x)\mathbf1-
\lambda
b(x)\beta(x))^Tc(x)^{-1}b(x)\sigma(x)^T-\lambda\beta(x)^T\sigma(x)^T+
\theta(x)^T\,,
\intertext{and}
\label{eq:84b}R_\lambda(x)&=\frac{\lambda}{2(1-\lambda)}\,\norm{a(x)-r(x)\ind-
\lambda b(x)\beta(x)}^2_{c(x)^{-1}}
+\lambda(r(x)-\alpha(x)+\frac{1}{2}\,\abs{\beta(x)}^2)\notag
\\&+\frac{1}{2}\,\lambda^2\abs{\beta(x)}^2\,,
\end{align}
\end{subequations}
we have that 
\begin{equation}
  \label{eq:80}
\breve   H(x;\lambda,p)=
\frac{1}{2}\,p^TT_\lambda(x)p
+S_\lambda(x)p+
R_\lambda(x)\,.
\end{equation}
Let us note that, by condition (N), $T_\lambda(x)$ is a uniformly
positive definite matrix.

If $\lambda=1$\,, then, on noting that
\begin{multline}
  \label{eq:96}
    M(u,x)
+\frac{1}{2}\,
\abs{N(u,x)}^2+ p^T\sigma(x) N(u,x)=
u^T(a(x)-r(x)\mathbf 1-b(x)\beta(x)+b(x)\sigma(x)^Tp)\\+
r(x)-\alpha(x)+\abs{\beta(x)}^2-p^T\sigma(x)\beta(x)\,,
\end{multline}
we have that $\breve H(x;1,p)<\infty$ if and only if 
\begin{equation}
  \label{eq:135}
a(x)-r(x)\mathbf 1-b(x)\beta(x)+b(x)\sigma(x)^Tp=0\,,
\end{equation}
in which case
\begin{equation}
  \label{eq:61}
  \breve H(x;1,p)=r(x)-\alpha(x)+\abs{\beta(x)}^2-p^T\sigma(x)\beta(x)+
p^T\theta(x)+\frac{1}{2}\,\abs{{\sigma(x)}^Tp}^2\,.\end{equation}
As mentioned, if  $\lambda>1$\,, then the righthand side of \eqref{eq:40} equals infinity.
Consequently, $\breve H(x;\lambda,p)$ is a lower semicontinuous
function of $(\lambda,p)$ with values in $\R\cup\{+\infty\}$\,.
By Lemma \ref{le:conc} below, $\breve H(x;\lambda,p)$ is convex in
$(\lambda,p)$\,.

 We define, given  $f\in\mathbb C^2$\,,
\begin{equation}
  \label{eq:59} 
H(x;\lambda,f)= \breve H(x;\lambda,\nabla f(x))
+\frac{1}{2}\, \text{tr}\,\bl({\sigma(x)}{\sigma(x)}^T\nabla^2 f(x)\br)\,.
\end{equation}
By the convexity of $\breve H$\,, the function
$H(x;\lambda,f)$ is convex in $(\lambda,f)$\,. 

Let 
\begin{equation}
  \label{eq:29}
  F(\lambda)=
\inf_{f\in\mathbb C^2}
\sup_{x\in\R^l}H(x;\lambda,f)\text{ if }\lambda<1\,,
\end{equation}
$F(1)=\lim_{\lambda\uparrow 1}F(\lambda)$\,, $F(\lambda)=\infty$ if $\lambda>1$\,,
and
\begin{equation*}
\overline\lambda=\sup\{\lambda\in\R:\,F(\lambda)<\infty\}\,.
\end{equation*}
 By $H(x;\lambda,f)$ being
 convex in $(\lambda,f)$\,, $F(\lambda)$ is convex for $\lambda<1$\,, 
so $F(1)$ is well
 defined, see, e.g., Theorem 7.5 on p.57 in Rockafellar \cite{Rock}.
The function  $F(\lambda)$ is  seen to be convex as a
function on $\R$\,. It is finite
 when $\lambda<\lambda_0$\,, for
 some $\lambda_0\in(0,1]$\,,
which is obtained by taking $f(x)=\kappa\abs{x}^2$\,, $\kappa>0$ being
small enough (see Lemma \ref{le:sup-comp} for more detail). 
Therefore
$\overline\lambda\in(0,1]$\,. 
  Lemma \ref{le:minmax} below establishes
that   $F(0)=0$\,, that $F(\lambda)$ is  lower semicontinuous on $\R$
and that  if $F(\lambda)$ is finite, with $\lambda<1$\,,
then the infimum in \eqref{eq:29} is attained at
 function $f^\lambda$ which
satisfies  the  equation 
\begin{equation}
  \label{eq:86}
H(x;\lambda,f^\lambda)=F(\lambda)\,, \text{ for all }
x\in\R^l\,.  
\end{equation}
Furthermore, $f^\lambda\in\mathbb C_\ell^1$\,, 
with  $\mathbb{C}^1_\ell$   representing 
 the set of real--valued 
 continuously differentiable
 functions on $\R^l$ whose gradients satisfy the linear growth
 condition. Thus, the infimum in \eqref{eq:29} can be taken over
 $\mathbb C^2\cap \mathbb C^1_\ell$ when $\lambda<1$\,.
Equation \eqref{eq:86} is dubbed  an ergodic Bellman equation,
see, e.g., Fleming and Sheu \cite{FleShe02}, Kaise and Sheu \cite{KaiShe06},
Hata, Nagai, and Sheu \cite{Hat10},
Ichihara \cite{Ich11}.

Let $\mathcal{P}$ represent the set of probability measures $\nu$
on $\R^l$ such that $\int_{\R^l}\abs{x}^2\,\nu(dx)<\infty$\,.
For $\nu\in\mathcal{P}$\,,
we let $\mathbb L^{2}(\R^l,\R^l,\nu(dx))$ 
represent the Hilbert space (of the equivalence classes)
 of $\R^l$-valued functions $h(x)$ on
$\R^l$ that are square integrable with respect to $\nu(dx)$ equipped
with the norm $\bl(\int_{\R^l}\abs{h(x)}^2\,\nu(dx)\br)^{1/2}$ and we let
 $\mathbb L^{1,2}_0(\R^l,\R^l,\nu(dx))$ 
represent the closure in 
$\mathbb L^{2}(\R^l,\R^l,\nu(dx))$ 
of the set of gradients of $\mathbb
C_0^1$-functions.
 We will retain the notation 
$\nabla f$ for the elements of $\mathbb
L^{1,2}_0(\R^l,\R^l,\nu(dx))$\,, although those functions might not
be proper gradients.
Let 
$\mathcal{U}_\lambda$ denote the  set of functions
$f\in \mathbb C^2\cap \mathbb C^1_\ell$ such
that
$\sup_{x\in\R^l}H(x;\lambda,f)<\infty$\,.
The set $\mathcal{U}_\lambda$ is nonempty if and only if $F(\lambda)<\infty$\,. 
It is convenient to write \eqref{eq:29} in the form, cf. \eqref{eq:118},
\begin{equation}
  \label{eq:136}
    F(\lambda)=\inf_{f\in\mathcal{U}_\lambda}
\sup_{\nu\in\mathcal{P}}\int_{\R^l}H(x;\lambda,f)\,\nu(dx)\,,\quad\text{if }\lambda<1,
\end{equation}
the latter integral possibly being equal to $-\infty$\,.
We  adopt the convention that  $\inf_\emptyset=\infty$\,, so that
\eqref{eq:136} holds when $\mathcal{U}_\lambda=\emptyset$ too.
Let  $\mathbb C_b^2$ represent the subset of $\mathbb C^2$ 
of functions with bounded second derivatives.
Let, for  $f\in\mathbb  C_b^2$ and $m\in\mathbb P$\,, 
\begin{align}
  \label{eq:62}
  G(\lambda,f,m)=&\int_{\R^l}H(x;\lambda,f)\,m(x)\,dx\,.
\end{align}
This function is well defined, 
is convex in $(\lambda,f)$ and is concave in $m$\,.
By Lemma \ref{le:conc} and Lemma \ref{le:saddle_3} below,
for $\lambda<\overline\lambda$\,, 
$F(\lambda)=\sup_{m\in\hat{\mathbb P}}
\inf_{f\in\mathbb C_0^2}G(\lambda,f,m)$\,. One can  replace
$\hat{\mathbb P}$ with $\mathbb P$ in the preceding $\sup$ and replace
$\mathbb C_0^2$ with $\mathbb C_b^2$ in the preceding $\inf$.
If  $m\in\hat{\mathbb P}$\,, then  integration by parts in
\eqref{eq:62}    obtains that, for $f\in\mathbb C_b^2$\,,
\begin{equation}
  \label{eq:12}
 G(\lambda,f,m)=\breve G(\lambda,\nabla f,m)\,,
\end{equation}
where
\begin{align}
  \label{eq:11}
  \breve G(\lambda,\nabla f,m)
=&\int_{\R^l}\Bl(\breve H(x;\lambda,\nabla f(x))
-\frac{1}{2}\, \nabla f(x)^T
\,\frac{\text{div}\,({\sigma(x)}{\sigma(x)}^T\,
 m(x))}{m(x)}\,
\Br)\,m(x)\,dx\,.
\end{align}
(Unless specifically mentioned otherwise,
it is assumed throughout that $0/0=0$\,. More detail 
on  the integration by parts is given
  in the proof of Lemma \ref{le:minmax}.)
The function $\breve G(\lambda,\nabla f,m)$ is convex in
$(\lambda,f)$ and is concave in $m$\,. 
The righthand side of \eqref{eq:11} being well defined 
for $\nabla f\in\mathbb L_0^{1,2}(\R^l,\R^l,m(x)\,dx)$\,, we adopt \eqref{eq:11}
as the definition of $\breve G(\lambda,\nabla f,m)$ for 
$(\lambda,\nabla f,m)\in \R\times \mathbb
L_0^{1,2}(\R^l,\R^l,m(x)\,dx)\times
\hat{\mathbb P}$\,.


Let, for $m\in\hat{\mathbb P}$\,,
\begin{equation}
  \label{eq:13}
  \breve F(\lambda,m)=\inf_{\nabla f\in\mathbb L^{1,2}_0(\R^l,\R^l,m(x)\,dx)}
\breve G(\lambda,\nabla f,m)\,,
\end{equation}
when $\lambda\le1$ and let $\breve F(\lambda,m)=\infty$\,, for
$\lambda>1$\,. By Lemma \ref{le:conc} below,
the infimum in \eqref{eq:13} is 
attained uniquely, if finite, the latter always being the case for
$\lambda<1$\,.  Furthermore,  if $\lambda<1$\,, then
$\breve F(\lambda,m)
=\inf_{f\in\mathbb C_0^2}
G(\lambda,f,m)\,$.
By \eqref{eq:11}, the function $\breve F(\lambda,m)$
is convex in
$\lambda$ and is concave in $m$\,. It is lower semicontinuous in
$\lambda$ and is strictly convex on $(-\infty,1)$
  by Lemma \ref{le:conc},  so, by convexity, see
Corollary 7.5.1 on p.57 in Rockafellar \cite{Rock},
$  \breve F(1,m)=\lim_{\lambda\uparrow1}\inf_{f\in\mathbb C_0^2}
G(\lambda,f,m)$\,.
By Lemma \ref{le:saddle_3} below,
$\lambda q-\breve F(\lambda,m)$  has saddle point 
$(\hat\lambda,\hat m)$ 
in $(-\infty,\overline\lambda]\times\hat{\mathbb P}$\,, with $\hat\lambda$ being specified
uniquely, and
the supremum of $\lambda q-F(\lambda)$ over $\R$ is attained at
$\hat\lambda$\,.

If $\hat\lambda<1$\,, which is ''the regular case'', then
$\hat m$ is specified uniquely and there exists $\hat f\in \mathbb
C^2\cap \mathbb C^1_\ell$ such that 
$(\hat\lambda,\hat f,\hat m)$ 
is  a  saddle point
of the function $\lambda q-\breve G(\lambda,\nabla f,m)$
in $\R\times
(\mathbb C^2\cap\mathbb C^1_\ell)\times \hat{\mathbb{P}}$\,,
with $\nabla \hat f$  being specified uniquely.
As a matter of fact, $\hat f=f^{\hat \lambda}$\,, 
so the function $\hat f$  satisfies the
ergodic Bellman equation
\begin{equation}
  \label{eq:103'}
  H(x;\hat\lambda,\hat f)=F(\hat\lambda)\,,
\text{ for all $x\in\R^l$\,.}
\end{equation}
The density $\hat m$ is the invariant density of a diffusion
process in that
  \begin{multline}
  \label{eq:104'}
                 \int_{\R^l}
\bl(\nabla  h(x)^T(\hat\lambda \sigma(x) N(\hat u(x),x)+\theta(x)
+\sigma(x)\sigma(x)^T\nabla\hat  f(x))
+\frac{1}{2}\, \text{tr}\,(\sigma(x)\sigma(x)^T
\,\nabla^2  h(x))
\br)\\ \hat m(x)\,dx=0\,,
\end{multline}
for all $h\in\mathbb C_0^2$\,.
Essentially, equations \eqref{eq:103'} and \eqref{eq:104'} represent
Euler--Lagrange equations for $\breve G(\hat\lambda,\nabla f,m)$
at $(\hat f,\hat m)$\,. They specify  $\nabla\hat f$ and $\hat m$ 
uniquely and  imply that $(\hat f,\hat m)$ is a
saddle point of $\breve G(\hat\lambda,\nabla f,m)$\,, cf., Proposition 1.6 on
p.169 in Ekeland and Temam \cite{EkeTem76}.
We  define  $\hat u(x)$ 
as the $u$ that attains supremum in
\eqref{eq:40} for $\lambda=\hat\lambda$ and $p=\nabla\hat f(x)$ 
so that, by \eqref{eq:39},
\begin{equation}
  \label{eq:69}
        \hat u(x)=\frac{1}{1-\hat\lambda}\,c(x)^{-1}\bl(a(x)-r(x)\mathbf1
-\hat\lambda b(x)\beta(x)+b(x)\sigma(x)^T\nabla\hat
  f(x)\br)\,.
\end{equation}

Suppose that   $\hat\lambda=1$\,, which is ''the degenerate case''.
Necessarily, $\overline\lambda=1$\,,
so, the infimum on the righthand side of
 \eqref{eq:13} for $\lambda=1$ and $m=\hat m$ is finite and  is
 attained at unique $\nabla \hat f$\,(see Lemma \ref{le:conc}).
Consequently, $F(1)<\infty$\,.
According to Lemma \ref{le:saddle_3} below, cf., \eqref{eq:135} and \eqref{eq:104'},
\begin{equation}
  \label{eq:134}
a(x)-r(x)\mathbf 1-b(x)\beta(x)+b(x)\sigma(x)^T\nabla\hat f(x)=0
\quad\hat
m(x)dx\text{--a.e.}  \end{equation}
  and
  \begin{equation}
    \label{eq:137}
        \int_{\R^l}\bl(
\nabla h(x)^T\bl(
-\sigma(x)\beta(x)+\theta(x)+\sigma(x)\sigma(x)^T\nabla\hat f(x)\br)
+\frac{1}{2}\,
\text{tr}\,\bl(\sigma(x)\sigma(x)^T\nabla^2 h(x)\br)\br)\hat m(x)\,dx=0\,,
\end{equation}
provided  that 
$h\in\mathbb C_0^2$ and $b(x)\sigma(x)^T\nabla h(x)=0$
$\hat m(x)\,dx$--a.e.
By \eqref{eq:96}, the value of 
the expression in the supremum in \eqref{eq:40} 
 does not depend on the
choice of $ u$ when $\lambda=1$ and $p=\nabla \hat f(x)$\,, so,
there is some leeway as to the choice of  an optimal control.
As the concave function $\lambda q-
   \breve F(\lambda,\hat m)$ 
attains maximum at $\lambda=1$\,, 
 $d/d\lambda\,\breve F(\lambda,\hat m)\Big|_{1-}\le q$\,, 
with 
$d/d\lambda\,\breve F(\lambda,\hat m)\Big|_{1-}$ standing for the lefthand derivative of 
$\breve F(\lambda,\hat m)$
 at $\lambda=1$\,.
Hence,
there exists  bounded continuous function
 $\hat v(x)$  with values in
 the range
of $b(x)^T$  such that
$\abs{\hat v(x)}^2/2=q-d/d\lambda\,\breve F(\lambda,\hat m)\Big|_{1-}$\,.
(For instance, one can take
$  \hat v(x)=b(x)^Tc(x)^{-1/2}\,z\,
\sqrt{2(q-d/d\lambda\,\breve F(\lambda,\hat m)\Big|_{1-})}\,,
$ where $z$ represents an   element of $\R^n$ of  length one.)
We  let
$\hat u(x)=
c(x)^{-1}b(x)(\beta(x)+\hat v(x))$\,.

In either case, we define 
$  \hat\pi_t=\hat u(X_t)$ and, given $\rho>0$\,,
$  \hat\pi^\rho_t=\hat u^\rho(X_t)$\,, where 
$\hat u^\rho(x)=\hat u (x)\chi_{[0,\rho]}(\abs{x})$\,.
We introduce, given $\lambda\in\R$\,, 
$f\in \mathbb C^2$\,, 
 $m\in \mathbb P$\,, and  measurable $\R^n$--valued 
function $v=(v(x)\,,x\in\R^l)$\,,
\begin{multline}
  \label{eq:53}
    \overline H(x;\lambda,f,v)= 
\lambda  M(v(x),x)
+\frac{1}{2}\,\abs{\lambda
N(v(x),x)+{\sigma(x)}^T\nabla f(x)}^2
+\nabla f(x)^T\,\theta(x)\\
+\frac{1}{2}\, \text{tr}\,\bl({\sigma(x)}{\sigma(x)}^T\nabla^2 f(x)\br)\,.
\end{multline}
By \eqref{eq:40},  \eqref{eq:59},
\eqref{eq:62},  \eqref{eq:69}, and  \eqref{eq:53}, if
$\hat\lambda<1$\,, then
\begin{equation}
  \label{eq:47}
F(\hat\lambda)= H(x;\hat\lambda,\hat f)=\overline
H(x;\hat\lambda,\hat f,\hat u)=\inf_{f\in\mathbb C^2}\sup_{x\in\R^l}\overline
H(x;\hat\lambda, f,\hat u)\,.
\end{equation}
Let
\begin{subequations}
  \begin{align}
  \label{eq:30}
  J_q=&\sup_{\lambda\le1}(\lambda q-F(\lambda))\,,\\
  \label{eq:38}
   J^{\text{o}}_q=& 
 \sup_{\lambda\in[0,1]}
(\lambda q-F(\lambda))\,,
\intertext{and}
  \label{eq:36}
   J^{\text{s}}_q=&  \sup_{\lambda\le0}(\lambda q-F(\lambda))\,.
\end{align}
\end{subequations}
 It is  noteworthy that if
$\hat\lambda<0$\,, then $J^{\text{s}}_q>0$
and $J^{\text{0}}_q=0$\,, while if $\hat\lambda>0$\,,
  then $J^{\text{o}}_q>0$ and $J^{\text{s}}_q=0$\,.

We are in a position to state the first limit theorem.
\begin{theorem}
  \label{the:bounds}
  \begin{enumerate}
  \item 
For arbitrary portfolio $\pi=(\pi_t,\,t\ge0)$\,,
\begin{equation}
  \label{eq:58}
    \liminf_{t\to\infty}\frac{1}{t}\ln
\mathbf{P}(L^\pi_t< q)\ge -J^{\text{s}}_{q}\,.
\end{equation}
If, in addition, $\abs{X_0}$ is bounded and $ f^\lambda(x)$ is bounded below
by an affine function of $x$ when
$0<\lambda<\overline\lambda$\,, then
 \begin{equation}
   \label{eq:60}
     \limsup_{t\to\infty}\frac{1}{t}\ln
\mathbf{P}(L^\pi_t\ge q)\le - J^{\text{o}}_{q}\,.
\end{equation}
\item
 The following asymptotic bound holds:
\begin{equation}
        \label{eq:27}
      \liminf_{t\to\infty}\frac{1}{t}\ln
\mathbf{P}(L^{\hat\pi}_t> q)\ge -J^{\text{o}}_{q}\,.
\end{equation}
If, in addition,
\begin{equation}
  \label{eq:97}
  \limsup_{\rho\to\infty}
\inf_{f\in \mathbb C^2}\sup_{x\in \R^l}
\overline H(x;\hat\lambda,f,\hat u^\rho)\le F(\hat\lambda)
\end{equation}
when $\hat\lambda<0$\,,
then
\begin{equation}
    \label{eq:9}
\limsup_{\rho\to\infty}\limsup_{t\to\infty}\frac{1}{t}\ln
\mathbf{P}(L^{\hat\pi^\rho}_t\le q)\le -J^{\text{s}}_{q}\,.
\end{equation}
  \end{enumerate}
\end{theorem}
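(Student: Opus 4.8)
The plan is to prove the four asymptotic bounds of Theorem \ref{the:bounds} by the probabilistic-change-of-measure and exponential-Markov-inequality technique sketched in Section \ref{sec:model}, exploiting the saddle-point structure of $\lambda q-\breve F(\lambda,m)$ established in Lemma \ref{le:saddle_3}, rather than through any control-theoretic argument. The unifying device is the following: for a fixed $\lambda$ and a fixed ''test function'' $f\in\mathbb C^2\cap\mathbb C^1_\ell$, apply It\^o's lemma to $\exp(\lambda t L^\pi_t+f(X_t))$ and observe, using \eqref{eq:1a}, \eqref{eq:53} and the definition of $\overline H$, that
\begin{equation*}
\mathcal E^{\lambda,f,\pi}_t=\exp\Bl(\lambda t L^\pi_t+f(X_t)-f(X_0)-\int_0^t\overline H(X_s;\lambda,f,\pi_s)\,ds\Br)
\end{equation*}
is a nonnegative local martingale, hence a supermartingale, so $\mathbf E\,\mathcal E^{\lambda,f,\pi}_t\le 1$. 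Whenever $\sup_x\overline H(x;\lambda,f,v(\cdot))\le F(\lambda)$ for the relevant control this yields $\mathbf E\exp(\lambda t L^\pi_t+f(X_t))\le e^{-f(X_0)}e^{tF(\lambda)}$, and then Chebyshev on $\{L^\pi_t\ge q\}$ (for $\lambda\ge0$) or $\{L^\pi_t\le q\}$ (for $\lambda\le0$), after discarding the $f(X_t)$ term via its lower bound and controlling $e^{-f(X_0)}$ through \eqref{eq:77} or the boundedness of $|X_0|$, produces the exponential bound $-(\lambda q-F(\lambda))$; optimising over the admissible range of $\lambda$ gives $-J^{\mathrm o}_q$, $-J^{\mathrm s}_q$, or $-J_q$.

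For the upper bounds \eqref{eq:60} and \eqref{eq:9}, which hold for \emph{every} portfolio, one faces the obstacle that an arbitrary $\pi_s$ need not satisfy $\overline H(X_s;\lambda,f,\pi_s)\le F(\lambda)$. This is resolved by passing from $\overline H$ to $H$: since $H(x;\lambda,f)=\sup_u\overline H(x;\lambda,f,u)$ when $\lambda\ge0$ and $\lambda<1$ (by \eqref{eq:40}, \eqref{eq:59} and the concavity of $\overline H$ in $u$ for $\lambda\ge 0$ — here is where $\lambda\ge 0$ is essential and where the case $\lambda>1$ is excluded), one has $\overline H(X_s;\lambda,f,\pi_s)\le H(X_s;\lambda,f)\le\sup_x H(x;\lambda,f)$, and choosing $f=f^\lambda$ from \eqref{eq:86} makes the last quantity exactly $F(\lambda)$. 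The hypothesis that $f^\lambda$ is bounded below by an affine function is precisely what keeps the discarded term $f^\lambda(X_t)$ from spoiling the inequality and what makes $e^{-f^\lambda(X_0)}$ integrable given the boundedness of $|X_0|$. For \eqref{eq:9} with $\hat\lambda<0$ one uses the truncated control $\hat u^\rho$ together with the hypothesis \eqref{eq:97}, which guarantees the existence, for each $\rho$ large, of some $f=f_\rho$ with $\sup_x\overline H(x;\hat\lambda,f_\rho,\hat u^\rho)$ close to $F(\hat\lambda)$; the same supermartingale bound then gives the decay rate $-(\hat\lambda q-F(\hat\lambda))=-J^{\mathrm s}_q$ up to an error that vanishes as $\rho\to\infty$.

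For the lower bounds \eqref{eq:58} and \eqref{eq:27} the strategy is the classical large-deviation lower bound via a tilted measure. One defines a new probability $\widetilde{\mathbf P}$ on $\mathcal F_t$ by the density $\mathcal E^{\hat\lambda,\hat f,\hat\pi}_t$ (using the Euler--Lagrange relations \eqref{eq:103'}--\eqref{eq:104'}, or \eqref{eq:134}--\eqref{eq:137} in the degenerate case, to see that under $\widetilde{\mathbf P}$ the factor $X$ becomes an ergodic diffusion with invariant density $\hat m$ and $L^{\hat\pi}_t\to \int M(\hat u(x),x)\hat m(x)\,dx$); a law-of-large-numbers/ergodic argument under $\widetilde{\mathbf P}$ shows $\widetilde{\mathbf P}(L^{\hat\pi}_t>q)\to 1$ (or stays bounded away from $0$) when the drift of $L^{\hat\pi}$ exceeds $q$, and the entropy estimate $(1/t)\ln\mathbf E_{\widetilde{\mathbf P}}[\text{density}^{-1};L^{\hat\pi}_t>q]\to -(\hat\lambda q-F(\hat\lambda))$ converts this into \eqref{eq:27}; for \eqref{eq:58} the bound is universal because choosing $\hat\lambda\le 0$ only makes $\{L^\pi_t<q\}$ the relevant event and the tilting argument, applied with $\lambda\le 0$ and $f^\lambda$, needs no optimality of $\pi$. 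I expect the main obstacle to be the justification that $\mathcal E^{\lambda,f,\pi}_t$ is a genuine martingale (needed for the change of measure in the lower bounds) and the verification that the ergodic behaviour of the tilted factor process is strong enough — uniformly in $t$ — to control $L^{\hat\pi}_t$; this is where the linear-growth and nondegeneracy conditions, the recurrence hypothesis \eqref{eq:45}, the exponential moment \eqref{eq:77}, and the regularity $\hat f\in\mathbb C^2\cap\mathbb C^1_\ell$ must all be brought to bear, most delicately in the degenerate case $\hat\lambda=1$ where $\hat u$ is only defined through the auxiliary function $\hat v$ and the identity $\breve F(1,\hat m)=\lim_{\lambda\uparrow1}\breve F(\lambda,\hat m)$.
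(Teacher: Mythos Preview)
Your overall architecture matches the paper's: supermartingale plus exponential Chebyshev for the upper bounds \eqref{eq:60} and \eqref{eq:9}, and Girsanov tilting plus an ergodic theorem for the lower bounds \eqref{eq:58} and \eqref{eq:27}. Two points need sharpening, however.

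\textbf{Handling the test function in the upper bounds.} You cannot simply ``discard the $f(X_t)$ term via its lower bound'': the affine lower bound on $f^\lambda$ gives $f^\lambda(X_t)\ge -C_1|X_t|-C_2$, which goes the wrong way inside the expectation for an upper bound on $\mathbf P(L^\pi_t\ge q)$. The paper's device is the \emph{reverse H\"older inequality}
\[
\mathbf E\bigl[\chi_{\{L^\pi_t\ge q\}}e^{f(X_t)-f(X_0)}\bigr]\ge \mathbf P(L^\pi_t\ge q)^{1+\epsilon}\bigl(\mathbf E e^{-(f(X_t)-f(X_0))/\epsilon}\bigr)^{-\epsilon},
\]
after which the affine lower bound on $f^\lambda$ together with Lemma~\ref{le:exp_moment} (uniform exponential moments of $|X_t|$) controls the last factor. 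The same trick is used for \eqref{eq:9}, where in addition one must take $f\in\mathcal A_\kappa$ (so that $f$ grows at most quadratically and \eqref{eq:77} applies) and invoke Lemma~\ref{le:saddle_2} to pass from $\inf_{f\in\mathcal A_\kappa}$ back to $\inf_{f\in\mathbb C^2}$ in \eqref{eq:97}.

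\textbf{The universal lower bound \eqref{eq:58}.} Your sentence ``the tilting argument, applied with $\lambda\le0$ and $f^\lambda$, needs no optimality of $\pi$'' misses the actual mechanism. Under the measure $\hat{\mathbf P}^t$ with density \eqref{eq:35'} (which is built from $\hat\lambda,\hat f,\hat u$, not from $\pi$), one has the identity
\[
L^\pi_t=\frac{1}{t}\ln\mathcal E^t_1+\bigl[\text{terms depending only on }X^t,\hat u,\hat f,\hat\lambda\bigr],
\]
where $\mathcal E^t_s=\exp\bigl(\sqrt t\int_0^s(\pi^t_{\tilde s}-\hat u(X^t_{\tilde s}))^Tb(X^t_{\tilde s})\,d\hat W^t_{\tilde s}-\frac{t}{2}\int_0^s\|\pi^t_{\tilde s}-\hat u(X^t_{\tilde s})\|^2_{c(X^t_{\tilde s})}d\tilde s\bigr)$ is a $\hat{\mathbf P}^t$-supermartingale; see \eqref{eq:6}. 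This decomposition relies on the specific form \eqref{eq:69} of $\hat u$ and is what allows an arbitrary $\pi$ to be compared to $\hat\pi$: Markov's inequality gives $\hat{\mathbf P}^t((1/t)\ln\mathcal E^t_1<\delta)\to1$, so the $\pi$-dependent part contributes nothing to the rate. Without this step you have no control over $L^\pi_t$ under the tilted measure for a general $\pi$.

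Finally, for \eqref{eq:27} in the degenerate case $\hat\lambda=1$ the paper does not carry the Girsanov argument through; instead it invokes the full LDP for $(\hat\Psi^t,\mu^t)$ from Puhalskii~\cite{Puh16}, which is applicable because $\hat u$ is bounded in that case, and then evaluates the deviation function via the saddle-point relation \eqref{eq:89}. Your remark that this is ``most delicate'' is apt, but the route is genuinely different there.
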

\begin{remark}
  The upper bounds in \eqref{eq:60} and in \eqref{eq:9} are of
  interest only if $\hat\lambda>0$ and $\hat\lambda<0$\,, respectively.
\end{remark}
\begin{remark}
   The assertions of
  Theorem
 \ref{the:bounds}  hold in the case where
$\beta(x)=0$ too, provided $\inf_{x\in\R^l}r(x)<q$\,.
If $\inf_{x\in\R^l}r(x)\ge q$\,, then investing  in the safe security only
is obviously optimal.
\end{remark}
\begin{remark}
  The requirement that $ f^\lambda(x)$ be bounded below 
by an affine function
when $0<\lambda<\overline\lambda$  is fulfilled 
for the Gaussian model, as we discuss below.
\end{remark}
A sufficient condition for  \eqref{eq:97}  to hold is given by the
next lemma which features a condition introduced by Nagai
\cite{Nag12}, see also Puhalskii and Stutzer \cite{PuhStu16}.
The proof is relegated to the appendix.
 \begin{lemma}
  \label{le:condition}
Suppose that 
there exist $\varrho>0$\,, $C_1>0$ and $C_2>0$ such that, for all $x\in\R^l$\,,
\begin{equation}
  \label{eq:31} (1+\varrho)   \norm{b(x)\sigma(x)^T\nabla\hat
  f(x)}^2_{c(x)^{-1}}
-\norm{a(x)-r(x)\mathbf1}^2
_{c(x)^{-1}}\le C_1\abs{x}+C_2\,.
\end{equation}
Then \eqref{eq:97} holds for $\hat\lambda<0$\,.
\end{lemma}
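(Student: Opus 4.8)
The plan is to establish \eqref{eq:97} by producing, for every $\rho>0$, a test function $f_\rho\in\mathbb C^2$ with $\sup_{x\in\R^l}\overline H(x;\hat\lambda,f_\rho,\hat u^\rho)\le F(\hat\lambda)+\epsilon(\rho)$, where $\epsilon(\rho)\to0$ as $\rho\to\infty$; passing to the infimum over $f$ and then letting $\rho\to\infty$ yields \eqref{eq:97}. Since $\hat\lambda<0$ we are in the regular case, so $\hat f$, $\hat u$, $\hat m$ are as in \eqref{eq:103'}, \eqref{eq:104'}, \eqref{eq:69}. The starting point is the identity obtained from \eqref{eq:40}, \eqref{eq:53}, \eqref{eq:59} and the maximiser \eqref{eq:39}: for $f\in\mathbb C^2$ and $v\in\R^n$,
\[
\overline H(x;\hat\lambda,f,v)=H(x;\hat\lambda,f)+\frac{\hat\lambda(\hat\lambda-1)}{2}\,\bl(v-u_f(x)\br)^Tc(x)\bl(v-u_f(x)\br)\,,
\]
where $u_f(x)$ is \eqref{eq:39} evaluated at $p=\nabla f(x)$; the correction is nonnegative because $\hat\lambda(\hat\lambda-1)>0$. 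Specialising to $f=\hat f$, so that $u_{\hat f}=\hat u$ and $\overline H(x;\hat\lambda,\hat f,\hat u)=F(\hat\lambda)$ by \eqref{eq:47}, and inserting $v=0$, one finds, using \eqref{eq:69},
\[
\overline H(x;\hat\lambda,\hat f,0)=F(\hat\lambda)+\frac{-\hat\lambda}{2(1-\hat\lambda)}\,\norm{a(x)-r(x)\ind-\hat\lambda b(x)\beta(x)+b(x)\sigma(x)^T\nabla\hat f(x)}^2_{c(x)^{-1}}\,.
\]
As $\hat u^\rho$ equals $\hat u$ on $\{\abs x\le\rho\}$ and $0$ on $\{\abs x>\rho\}$, the naive choice $f=\hat f$ fails solely through this extra term, which generically grows quadratically in $x$.

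To remedy this I would glue $\hat f$ to a small quadratic. Fix $\kappa>0$ small and, with $\rho_1=\rho_1(\rho)\to\infty$ slower than $\rho$, let $f_\rho$ equal $\hat f$ on $\{\abs x\le\rho_1\}$, equal $\kappa\abs x^2$ plus a matching constant on $\{\abs x\ge\rho\}$, and, on the annulus in between, a smooth regularisation of $\min(\hat f+c_\rho,\kappa\abs x^2)$ for the appropriate constant $c_\rho$. On the inner ball $\overline H(x;\hat\lambda,f_\rho,\hat u^\rho)=\overline H(x;\hat\lambda,\hat f,\hat u)=F(\hat\lambda)$. On $\{\abs x>\rho\}$, where $\hat u^\rho=0$, $\nabla f_\rho(x)=2\kappa x$ and $\nabla^2 f_\rho(x)=2\kappa I_l$, so by \eqref{eq:53}
\[
\overline H(x;\hat\lambda,f_\rho,0)=\hat\lambda\bl(r(x)-\alpha(x)+\tfrac12\abs{\beta(x)}^2\br)+\tfrac12\abs{-\hat\lambda\beta(x)+2\kappa\sigma(x)^Tx}^2+2\kappa\,x^T\theta(x)+\kappa\,\mathrm{tr}\,\bl(\sigma(x)\sigma(x)^T\br)\,,
\]
and by the mean-reversion hypothesis \eqref{eq:45} the term $2\kappa x^T\theta(x)$ is at most $-2\kappa c_0\abs x^2$ for $\abs x$ large, dominating the $O(\kappa^2\abs x^2)$ term once $\kappa$ is small, the rest being $O(\abs x)$; hence $\sup_{\abs x>\rho}\overline H(x;\hat\lambda,f_\rho,0)\to-\infty$. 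The case where $a-r\ind$ genuinely grows unboundedly needs more care: expanding $\overline H(x;\hat\lambda,\kappa\abs x^2,\hat u^\rho)$ via the gap identity, the net coefficient of $\norm{a(x)-r(x)\ind}^2_{c(x)^{-1}}$ produced by combining the gap term with the $R_{\hat\lambda}$--part of $H(x;\hat\lambda,\kappa\abs x^2)$, after a Cauchy--Schwarz split with a free parameter $\eta$, is $\tfrac{-\hat\lambda}{2(1-\hat\lambda)}\bl(\tfrac{1+\eta}{1+\varrho}-1\br)$, which is negative whenever $0<\eta<\varrho$; this is precisely where \eqref{eq:31} and the strict positivity of $\varrho$ enter, and the $O(\abs x)$ leftover is again absorbed by $2\kappa x^T\theta(x)$.

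The real work is the transition annulus $\{\rho_1<\abs x<\rho\}$, on which $\hat u^\rho=\hat u$ but $f_\rho\ne\hat f$. Setting $g_\rho=f_\rho-\hat f$ and using $\overline H(x;\hat\lambda,\hat f,\hat u)=F(\hat\lambda)$, \eqref{eq:53} gives
\[
\overline H(x;\hat\lambda,f_\rho,\hat u)=F(\hat\lambda)+\tfrac12\abs{\sigma(x)^T\nabla g_\rho(x)}^2+\nabla g_\rho(x)^T\bl(\hat\lambda\sigma(x)N(\hat u(x),x)+\theta(x)+\sigma(x)\sigma(x)^T\nabla\hat f(x)\br)+\tfrac12\,\mathrm{tr}\,\bl(\sigma(x)\sigma(x)^T\nabla^2 g_\rho(x)\br)\,,
\]
the bracketed vector being the drift of the invariant diffusion \eqref{eq:104'}. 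The regularised-minimum construction makes $\nabla g_\rho$ a convex combination of $\nabla\hat f$ and $2\kappa x$ there, hence of order $\abs x$ by $\hat f\in\mathbb C^1_\ell$, and makes its contribution to $\mathrm{tr}\,(\sigma(x)\sigma(x)^T\nabla^2 g_\rho(x))$ of the favourable sign; the cross and square terms are bounded once more by \eqref{eq:31} and \eqref{eq:45}, and, with the width of the annulus chosen to grow with $\rho$, one obtains $\overline H(x;\hat\lambda,f_\rho,\hat u)\le F(\hat\lambda)+\epsilon(\rho)$ uniformly there. Taking the supremum over the three regions gives the required bound. I expect the annular estimate — interpolating between ``$f_\rho=\hat f$'' near the inner boundary and ``$f_\rho=\kappa\abs x^2$'' near the outer boundary while keeping $\overline H$ below $F(\hat\lambda)+o(1)$ — to be the main obstacle; everything else reduces to the identities above together with routine completing-the-square, Cauchy--Schwarz and mean-reversion estimates.
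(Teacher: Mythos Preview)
Your approach is genuinely different from the paper's and, as you yourself flag, the annulus estimate is where it breaks down. On the shell $\{\rho_1<|x|<\rho\}$ you write
\[
\overline H(x;\hat\lambda,f_\rho,\hat u)=F(\hat\lambda)+\tfrac12|\sigma(x)^T\nabla g_\rho(x)|^2+\nabla g_\rho(x)^T D(x)+\tfrac12\,\mathrm{tr}\bl(\sigma(x)\sigma(x)^T\nabla^2 g_\rho(x)\br),
\]
with $D(x)=\hat\lambda\sigma(x)N(\hat u(x),x)+\theta(x)+\sigma(x)\sigma(x)^T\nabla\hat f(x)=T_{\hat\lambda}(x)\nabla\hat f(x)+S_{\hat\lambda}(x)^T$. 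You then say the cross and square terms are controlled by \eqref{eq:31} and \eqref{eq:45}. But \eqref{eq:45} is mean reversion for $\theta$, not for $D$; the invariant drift $D$ picks up the extra pieces $\hat\lambda\sigma N(\hat u,\cdot)+\sigma\sigma^T\nabla\hat f$, and nothing in the hypotheses forces $x^TD(x)\le -c|x|^2$. Since $\nabla g_\rho$ is generically of order $|x|$ on the outer part of the annulus (it has to move from $0$ to $2\kappa x-\nabla\hat f(x)$), both the square term and the cross term are of order $|x|^2$ there, and you have not exhibited a mechanism that makes their sum $\le o(1)$. Your placement of \eqref{eq:31} is also off: on the true outer region $\{|x|>\rho\}$ you have $\hat u^\rho=0$ and your first direct computation of $\overline H(x;\hat\lambda,\kappa|x|^2,0)$ already gives $\to-\infty$ via $2\kappa x^T\theta(x)$; \eqref{eq:31} is not needed there, and the subsequent ``more care'' paragraph is addressing the wrong region.

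The paper bypasses all of this by switching to the measure side. By Lemma~\ref{le:saddle_2},
\[
\inf_{f\in\mathbb C^2}\sup_{x}\overline H(x;\hat\lambda,f,\hat u^\rho)=\sup_{\nu\in\mathcal P}\inf_{f\in\mathbb C_0^2}\int\overline H(x;\hat\lambda,f,\hat u^\rho)\,\nu(dx),
\]
and the single test function $f_{\overline\kappa}(x)=\overline\kappa|x|^2/2$, together with \eqref{eq:31}, gives $\overline H(x;\hat\lambda,f_{\overline\kappa},\hat u^\rho)\le \overline K_1-\overline K_2|x|^2$ \emph{uniformly in $\rho$}. Hence the maximising measures $\nu_\rho$ have uniformly bounded second moments and are relatively compact in $\mathcal P$. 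For any fixed $\tilde f\in\mathbb C_0^2$, \eqref{eq:31} also yields the $\rho$--uniform linear bound $\overline H(x;\hat\lambda,\tilde f,\hat u^\rho)\le\tilde C_1|x|+\tilde C_2$, so along a convergent subsequence $\nu_\rho\to\tilde\nu$ one passes to the limit by Fatou/dominated convergence and obtains $\limsup_\rho\inf_f\int\overline H(\cdot,\hat u^\rho)\,d\nu_\rho\le\inf_f\int\overline H(\cdot,\hat u)\,d\tilde\nu\le F(\hat\lambda)$ by \eqref{eq:47}. No $\rho$--dependent test function is ever constructed, and the annulus never appears.
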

\begin{remark}
As the proof  shows, an upper bound on  the righthand side of \eqref{eq:31} can be
  allowed to grow at a subquadratic rate.
\end{remark}
\begin{remark}
  The inequality in \eqref{eq:31} holds provided
  \begin{equation}
    \label{eq:92}
        \limsup_{\abs{x}\to\infty}
\frac{1}{\abs{x}^2}\bl(\norm{b(x)\sigma(x)^T\nabla\hat
  f(x)}^2_{c(x)^{-1}}
-\norm{a(x)-r(x)\mathbf1}^2
_{c(x)^{-1}}\br)<0\,.
  \end{equation}
It holds also if $b(x)\sigma(x)^T=0$ which means that the Wiener
processes effectively driving the security prices and the economic
factor process are independent.
\end{remark}

The following theorem shows that the portfolio 
$\hat\pi=(\hat\pi_t,\,t\ge0)$ is 
 risk--sensitive optimal for suitable $q$\,.
If 
 $F$ is subdifferentiable at $\lambda$\,, we let
$u^\lambda(x)$ represent the function $\hat u(x)$
for a value of
 $q$ that is a subgradient of $F$ at $\lambda$\,.
We also let
$u^{\lambda,\rho}(x)=u^\lambda(x)\chi_{[0,\rho]}(\abs{x})$\,,
$\pi^\lambda_t=u^\lambda(X_t)$\,, $\pi^{\lambda,\rho}_t=u^{\lambda,\rho}(X_t)$\,,
 $\pi^\lambda=(\pi^\lambda_t,\,t\ge0)$\,,
and $\pi^{\lambda,\rho}=(\pi^{\lambda,\rho}_t,\,t\ge0)$\,.
The function $F$ is subdifferentiable at $\lambda<\overline\lambda$\,.
It might not be subdifferentiable at
$\overline\lambda$\,.

\begin{theorem}
  \label{the:risk-sens}
  \begin{enumerate}
\item 
If  $0<\lambda<\overline\lambda$\,, 
if the function
$f^{\lambda(1+\epsilon)}(x)$  is bounded below by an affine function of $x$ when 
$\epsilon$ is small enough,
 and if $\abs{X_0}$ is bounded, 
 then,
  for any portfolio $\pi=(\pi_t\,,t\ge0)$\,, 
\begin{equation*}
  \limsup_{t\to\infty}\frac{1}{t}\,
\ln \mathbf Ee^{\lambda t L^\pi_t}\le  F(\lambda)\,.
\end{equation*}
If either  $0<\lambda<\overline\lambda$
or $\lambda=\overline\lambda$ and $F$ is subdifferentiable at
$\overline\lambda$\,, then 
\begin{equation*}
  \liminf_{t\to\infty}\frac{1}{t}\,
\ln \mathbf Ee^{\lambda t L^{\pi^\lambda}_t}\ge F(\lambda)\,.
\end{equation*}
If either $\lambda=\overline\lambda$ and
  $F$ is not subdifferentiable at
$\overline\lambda$ or $\lambda>\overline\lambda$\,, 
then there exists portfolio $\pi^\lambda$ such that 
\begin{equation*}
  \liminf_{t\to\infty}\frac{1}{t}\,
\ln \mathbf Ee^{\lambda t L^{\pi^\lambda}_t}\ge F(\lambda)\,.
\end{equation*}
  \item 
If  $\lambda<0$\,, then, for any portfolio $\pi=(\pi_t\,,t\in\R_+)$\,,
\begin{equation*}
  \liminf_{t\to\infty}\frac{1}{t}\,
\ln \mathbf Ee^{\lambda t L^\pi_t}\ge F(\lambda)
\end{equation*}
and, provided \eqref{eq:97} holds with $\hat\lambda=\lambda$ and
$\hat u^\rho=u^{\lambda,\rho}$ and $\abs{X_0}$ is bounded,
\begin{equation*}
\lim_{\rho\to\infty}  \liminf_{t\to\infty}\frac{1}{t}\,
\ln \mathbf Ee^{\lambda t L^{\pi^{\lambda,\rho}}_t}=
\lim_{\rho\to\infty}  \limsup_{t\to\infty}\frac{1}{t}\,
\ln \mathbf Ee^{\lambda t L^{\pi^{\lambda,\rho}}_t}= F(\lambda)\,.
\end{equation*}
  \end{enumerate}
\end{theorem}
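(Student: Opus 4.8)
My plan is to derive the whole statement from two ingredients, used together with the identity $\mathbf E e^{\lambda tL^\pi_t}=\mathbf E(Z_t/Y_t)^\lambda$ (valid since $Z_0=Y_0$): an exponential supermartingale estimate, which yields all the \emph{upper} bounds, and the large deviation \emph{lower} bounds \eqref{eq:27} and \eqref{eq:58} of Theorem~\ref{the:bounds} combined with an elementary exponential Chebyshev inequality, which yield all the lower bounds.  The supermartingale estimate I would establish first: for $0\le\mu<1$ and $f\in\mathcal U_\mu$, apply It\^o's formula to $\Phi_t=\mu\ln(Z_t/Y_t)+f(X_t)-f(X_0)$ and then to $e^{\Phi_t}$; the drift of $e^{\Phi_t}$ comes out as $e^{\Phi_t}\,\overline H(X_t;\mu,f,\pi_t)$, with $\overline H$ as in \eqref{eq:53}, and since $\mu\ge0$ a comparison of \eqref{eq:40}, \eqref{eq:59} and \eqref{eq:53} gives $\overline H(x;\mu,f,v)\le H(x;\mu,f)$ for all $v$.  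Hence $e^{\Phi_t}\exp(-t\sup_yH(y;\mu,f))$ is a nonnegative local supermartingale, therefore a supermartingale, so $\mathbf E[(Z_t/Y_t)^\mu e^{f(X_t)-f(X_0)}]\le\exp(t\sup_yH(y;\mu,f))$; taking $f=f^\mu$ and using the ergodic Bellman equation \eqref{eq:86} (Lemma~\ref{le:minmax}) produces, for every portfolio and every $0\le\mu<\overline\lambda$, the key inequality $\mathbf E[(Z_t/Y_t)^\mu e^{f^\mu(X_t)-f^\mu(X_0)}]\le e^{tF(\mu)}$.

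\textbf{Part 1.}  For the universal upper bound with $0<\lambda<\overline\lambda$ I would pick $\epsilon>0$ so small that $\mu:=\lambda(1+\epsilon)<\overline\lambda$, factor $(Z_t/Y_t)^\lambda=[(Z_t/Y_t)^\mu e^{f^\mu(X_t)}]^{1/(1+\epsilon)}[e^{-f^\mu(X_t)/\epsilon}]^{\epsilon/(1+\epsilon)}$, and apply H\"older's inequality with exponents $1+\epsilon$ and $(1+\epsilon)/\epsilon$.  The key inequality (with $|X_0|$ bounded and $f^\mu$ continuous absorbing the $X_0$ term) bounds the first factor by $C^{1/(1+\epsilon)}e^{tF(\mu)/(1+\epsilon)}$; the second factor, $\mathbf E[e^{-f^\mu(X_t)/\epsilon}]^{\epsilon/(1+\epsilon)}$, I would control using the hypothesis that $f^{\lambda(1+\epsilon)}$ is bounded below by an affine function together with exponential moment bounds for $X_t$ (from \eqref{eq:77} and the linear growth of $\theta$), so that it grows at most exponentially at a rate that is $o(1)$ as $\epsilon\to0$.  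Letting $\epsilon\to0$ and using continuity of $F$ at $\lambda$ then gives $\limsup_t(1/t)\ln\mathbf E e^{\lambda tL^\pi_t}\le F(\lambda)$.  For the lower bound at $\pi^\lambda$, I would take $q$ to be a subgradient of $F$ at $\lambda$; then the unique maximiser of $\lambda'q-F(\lambda')$ is $\hat\lambda=\lambda$ (Lemma~\ref{le:saddle_3}), so $\pi^\lambda$ is the portfolio $\hat\pi$ attached to this $q$ and $J^{\text{o}}_q=\lambda q-F(\lambda)$ by \eqref{eq:38}; since $\lambda>0$, $\mathbf E e^{\lambda tL^{\pi^\lambda}_t}\ge e^{\lambda tq}\mathbf P(L^{\pi^\lambda}_t>q)$, so \eqref{eq:27} yields $\liminf_t(1/t)\ln\mathbf E e^{\lambda tL^{\pi^\lambda}_t}\ge\lambda q-J^{\text{o}}_q=F(\lambda)$.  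The same choice of $q$ works when $\lambda=\overline\lambda$ and $F$ is subdifferentiable there, \eqref{eq:27} being valid in the degenerate case too.

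\textbf{Part 1, blow-up.}  When $\lambda=\overline\lambda$ with $F$ not subdifferentiable at $\overline\lambda$, or $\lambda>\overline\lambda$, the left derivative of $F$ at $\overline\lambda$ is $+\infty$ (or $F\equiv+\infty$ past $\overline\lambda$), so for $q$ large the maximiser $\hat\lambda(q)$ of $\lambda'q-F(\lambda')$ satisfies $\hat\lambda(q)<\overline\lambda$ and $\hat\lambda(q)\uparrow\overline\lambda$.  Applying \eqref{eq:27} to the feedback portfolio $\hat\pi$ attached to such a $q$ gives $\liminf_t(1/t)\ln\mathbf E e^{\lambda tL^{\hat\pi}_t}\ge\lambda q-J^{\text{o}}_q=(\lambda-\hat\lambda(q))q+F(\hat\lambda(q))$, and I would check that this tends to $F(\lambda)$ as $q\to\infty$: to $+\infty$ when $\lambda>\overline\lambda$, since $J^{\text{o}}_q\le\overline\lambda q$ forces $\lambda q-J^{\text{o}}_q\ge(\lambda-\overline\lambda)q$; and to $F(\overline\lambda)$ when $\lambda=\overline\lambda$, since the affine minorants $F(\hat\lambda(q))+q(\,\cdot\,-\hat\lambda(q))$ of $F$, evaluated at $\overline\lambda$, converge to $F(\overline\lambda)$ by convexity and lower semicontinuity.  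To turn this into a single portfolio $\pi^\lambda$ rather than a family, I would take the feedback portfolio $\hat\pi$ for $q$ large enough, for which one checks that its finiteness domain $\{\mu:\lim_t(1/t)\ln\mathbf E(Z_t/Y_t)^\mu<\infty\}$ is an interval shrinking to $(-\infty,\overline\lambda)$ as $q\to\infty$, so a large $q$ already makes $\mathbf E(Z_t/Y_t)^\lambda$ grow at an infinite exponential rate when $\lambda>\overline\lambda$; alternatively, for $\lambda>1$ a portfolio whose position grows in the driving noise works, since then the quadratic form $u\mapsto\frac12\lambda(\lambda-1)u^Tc(x)u$ governing $\lambda M(u,x)+\frac12\lambda^2\abs{N(u,x)}^2$ is positive definite.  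This blow-up step -- producing a single portfolio and controlling its exponential moments when the leverage is unbounded -- is the part I expect to be the main obstacle.

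\textbf{Part 2.}  For $\lambda<0$ and arbitrary $\pi$, since $\lambda<0$ one has $\mathbf E e^{\lambda tL^\pi_t}\ge e^{\lambda tq}\mathbf P(L^\pi_t<q)$, so \eqref{eq:58} gives $\liminf_t(1/t)\ln\mathbf E e^{\lambda tL^\pi_t}\ge\lambda q-J^{\text{s}}_q$; taking the supremum over $q$ and using that, by \eqref{eq:36}, $q\mapsto J^{\text{s}}_q$ is the convex conjugate of the closed proper convex function equal to $F$ on $(-\infty,0]$ (where $F$ is finite) and to $+\infty$ elsewhere, Fenchel--Moreau gives $\sup_q(\lambda q-J^{\text{s}}_q)=F(\lambda)$, hence $\liminf_t(1/t)\ln\mathbf E e^{\lambda tL^\pi_t}\ge F(\lambda)$; this applies in particular to each $\pi^{\lambda,\rho}$.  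For the matching upper bound I would write $\mathbf E e^{\lambda tL^{\pi^{\lambda,\rho}}_t}=\abs{\lambda}\,t\int_{\R}e^{\lambda tq}\mathbf P(L^{\pi^{\lambda,\rho}}_t<q)\,dq$ (legitimate because $\abs{u^{\lambda,\rho}}$ is bounded, so all exponential moments of $L^{\pi^{\lambda,\rho}}_t$ are finite) and split the range of $q$: on a bounded interval \eqref{eq:9} -- which holds by hypothesis with $\hat u^\rho=u^{\lambda,\rho}$ -- bounds the integrand by $\exp(-t(J^{\text{s}}_q-o(1)))$; for $q\to+\infty$, $e^{\lambda tq}$ decays exponentially; for $q\to-\infty$, $J^{\text{s}}_q$ grows superlinearly in $\abs{q}$ because $F$ is finite on all of $(-\infty,0]$.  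This yields $\limsup_\rho\limsup_t(1/t)\ln\mathbf E e^{\lambda tL^{\pi^{\lambda,\rho}}_t}\le\sup_q(\lambda q-J^{\text{s}}_q)=F(\lambda)$, and combining with the lower bound gives the two displayed equalities.  (The $\epsilon\to0$ and $\rho\to\infty$ passages both require uniform exponential-moment control of $X_t$, a routine but non-vacuous point.)
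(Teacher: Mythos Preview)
Your plan for the Part~1 upper bound, the Part~1 lower bound in the subdifferentiable case, and the Part~2 lower bound is exactly the paper's argument: the H\"older step with $\mu=\lambda(1+\epsilon)$ together with the Bellman equation and Lemma~\ref{le:exp_moment}, and exponential Chebyshev combined with \eqref{eq:27} (respectively \eqref{eq:58}) plus Fenchel--Moreau.

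Your Part~2 upper bound, however, has a real gap and is also unnecessarily indirect. The bound \eqref{eq:9} is a statement about the portfolio $\hat\pi^\rho$ built from the saddle data attached to the \emph{specific} threshold $q$; it does not give, for the single portfolio $\pi^{\lambda,\rho}$ (which corresponds to $q=F'(\lambda)$ only), the estimate $\mathbf P(L^{\pi^{\lambda,\rho}}_t\le q')\le e^{-t(J^{\text s}_{q'}+o(1))}$ uniformly in $q'$ that your Laplace integral needs. With the fixed control $u^{\lambda,\rho}$, exponential Chebyshev only delivers the linear-in-$q'$ exponent $\lambda q'-C_\rho$, and the $q'$-integral diverges. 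The paper avoids all of this by noticing that your own supermartingale estimate requires neither $\mu\ge0$ nor the comparison $\overline H\le H$ if one does not optimise over the control: applied directly with $\mu=\lambda$, $v=u^{\lambda,\rho}$ and $f\in\mathcal A_\kappa$, it gives
\[
\mathbf E\bigl[e^{\lambda tL^{\pi^{\lambda,\rho}}_t}e^{f(X_t)-f(X_0)}\bigr]\le \exp\Bigl(t\sup_{x}\overline H(x;\lambda,f,u^{\lambda,\rho})\Bigr),
\]
and since every $f\in\mathcal A_\kappa$ is bounded below and $|X_0|$ is bounded, one reads off $\limsup_t t^{-1}\ln\mathbf E e^{\lambda tL^{\pi^{\lambda,\rho}}_t}\le\inf_{f\in\mathcal A_\kappa}\sup_x\overline H(x;\lambda,f,u^{\lambda,\rho})$. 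Lemma~\ref{le:saddle_2} replaces $\mathcal A_\kappa$ by $\mathbb C^2$, and condition~\eqref{eq:97} (with $\hat\lambda=\lambda$) then sends $\rho\to\infty$ to $F(\lambda)$. No integration over $q$ is needed.

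For the blow-up in Part~1 your ``single large-$q$ portfolio'' heuristic is not enough in general: for one fixed $q$ you only obtain the finite bound $(\lambda-\hat\lambda(q))q+F(\hat\lambda(q))$, and the leverage idea covers only $\lambda>1$. The paper's device is different. When $F$ is subdifferentiable at $\overline\lambda$ and $\lambda>\overline\lambda$, the subdifferential is a half-line $[q_0,\infty)$ and for every such $q$ the saddle is $\hat\lambda=\overline\lambda$, $\hat f=f^{\overline\lambda}$, so by \eqref{eq:69} the portfolio $\pi^{\overline\lambda}$ is the \emph{same} for all of them; applying \eqref{eq:27} for each $q\ge q_0$ with this one portfolio already gives rate $+\infty$. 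In the non-subdifferentiable cases the paper \emph{pieces together in time}: take $\lambda_i\uparrow\overline\lambda$ with subgradients $q_i\uparrow\infty$, choose $t_i\uparrow\infty$, and set $\pi^\lambda_t=\pi^{\lambda_i}_t$ on $[t_i,t_{i+1})$; the $\liminf$ then dominates each $(\lambda-\lambda_i)q_i+F(\lambda_i)$, hence $F(\lambda)$.
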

\begin{remark}
  We recall that $F(\lambda)=\infty$ if $\lambda>\overline\lambda$\,. 
For a one--dimensional model, $\overline\lambda$ is found explicitly in
  Pham \cite{Pha03}, also, see the appendix below. We conjecture  that $F$ is 
  differentiable and strictly convex 
for $\lambda<\overline \lambda$\,, which would imply
  that $\pi^\lambda$ is specified uniquely. This is provably the case
  for the model of Pham \cite{Pha03} and provided $\lambda<0$\,, see
  Pham \cite{Pha03} and Puhalskii and Stutzer \cite{PuhStu16}, respectively.
\end{remark}

If we   assume that
the functions $a(x)$\,, $r(x)$\,,  
$\alpha(x)$\, and $\theta(x)$ are affine functions of $x$ and that the diffusion
coefficients are constant, then fairly  explicit formulas are available. 
More specifically, let
\begin{subequations}
  \begin{align}
    \label{eq:85}
a(x)=A_1x+a_2\,, \\
 \label{eq:85a}
r(x)=r_1^Tx+r_2\,,\\\label{eq:85b}
\alpha(x)=\alpha_1^Tx+\alpha_2\,, \\
\label{eq:85d}
\theta(x)=\Theta_1 x+\theta_2\,,
\intertext{and}
\label{eq:85c}
b(x)=b,\;\beta(x)=\beta,\;\sigma(x)=\sigma\,,
  \end{align}
\end{subequations}
where $A_1\in \R^{n\times l}$\,, $a_2\in\R^n$\,, 
$r_1\in\R^l$\,, $r_2\in \R$\,, $\alpha_1\in \R^l$\,,
$\alpha_2\in\R$\,, $\Theta_1$ is a negative definite $l\times
l$-matrix,
 $\theta_1\in \R^l$\,, $b$ is an $n\times k$-matrix such that
the matrix $bb^T$ is positive definite, $\beta$ is a non-zero
$k$-vector,
 and
$\sigma$ is an $l\times k$-matrix such that the matrix $\sigma\sigma^T$ is
positive definite.
Condition (N)
expresses the requirement 
 that the ranges of $\sigma^T$ and $b^T$ have the trivial
 intersection  and that $\beta$ is not an element of the sum of
 those ranges. 

Finding the 
optimal portfolio
$\hat\pi_t$ may be reduced to
solving an algebraic Riccati equation.
We introduce, for $\lambda<1$\,,
\begin{align*}
  A(\lambda)&=\Theta_1+\frac{\lambda}{1-\lambda}\,\sigma b^Tc^{-1}(A_1-\mathbf 1
  r_1^T),\\
B(\lambda)&=T_\lambda(x)=\sigma\sigma^T+\frac{\lambda}{1-\lambda}\,\sigma
  b^Tc^{-1}b\sigma^T\,, \intertext{and}
C&=
\norm{A_1-\mathbf1 r_1^T}^2_{c^{-1}}\,.
\end{align*}
Let us  suppose that there exists
symmetric  $l\times l$--matrix $ P_1(\lambda)$ 
that satisfies the algebraic Riccati  equation
\begin{equation}
  \label{eq:78}
   P_1(\lambda)B(\lambda) P_1(\lambda)+A(\lambda)^T P_1(\lambda)
+ P_1(\lambda)A(\lambda)+
    \frac{\lambda}{1-\lambda}\, C=0\,.
\end{equation}
  Conditions
for the existence of solutions can be found
in Fleming and Sheu \cite{FleShe02}, see also
  Willems \cite{MR0308890}
 and  Wonham \cite{MR0239161}.
 According to Lemma 3.3 in Fleming and Sheu \cite{FleShe02},
provided that $\lambda< 0$\,,  there exists 
unique $ P_1(\lambda)$ solving \eqref{eq:78} such that $ P_1(\lambda)$ is
 negative semidefinite.
Furthermore, 
  the matrix 
  \begin{equation}
    \label{eq:76}
    D(\lambda)=A(\lambda)+B(\lambda)P_1(\lambda)
\end{equation}
is stable.
If $0<\lambda<1$ and $F(\lambda)<\infty$\,, 
then, by Lemma 4.3 in Fleming and Sheu
\cite{FleShe02}, there exists  unique $ P_1(\lambda)$
solving \eqref{eq:78} such that $ P_1(\lambda)$ is 
positive semidefinite and $D(\lambda)$ is semistable.
By Theorem 4.6
in Fleming and Sheu \cite{FleShe02}, 
the matrix $D(\lambda)$ is stable if $\lambda$ is small enough.

With $D(\lambda)$ being stable, the equation
\begin{equation}
  \label{eq:79}
D(\lambda)^Tp_2(\lambda)
+ 
 E(\lambda)=0
\end{equation}
has a unique solution for $ p_2(\lambda)$\,, where
\begin{equation}
  \label{eq:139a}
E(\lambda)=   \frac{ \lambda}{1-\lambda}\,
(A_1-\mathbf
1r_1^T+b\sigma^T P_1(\lambda))^Tc^{-1}(a_2-r_2\mathbf1-\lambda
b\beta)
\\+\lambda(r_1-\alpha_1- P_1(\lambda)\sigma\beta)+
 P_1(\lambda)\theta_2\,.
\end{equation}
 Substitution shows that 
$H(x;\lambda,\tilde f^\lambda)$\,, with $\tilde f^\lambda(x)=  x^T P_1(\lambda)x/2+ p_2(\lambda)^T x$\,, does not depend on $x$\,.
Let $m^\lambda$ denote the invariant distribution of
 the linear diffusion
\begin{multline}
  \label{eq:75}
    dY_t=D(\lambda)Y_t\,dt
+\bl(\frac{\lambda}{1-\lambda}\,\sigma b^Tc^{-1}
(a_2-r_2\mathbf1-\lambda b\beta
+b\sigma^T p_2(\lambda))-\lambda\sigma\beta
+\sigma\sigma^T p_2(\lambda)+\theta_2\br)\,dt\\+\sigma\,dW_t\,.
\end{multline}
Then the pair $(\tilde f^\lambda,m^\lambda)$ is a saddle point of 
$\breve G(\lambda,\nabla f,m)$ as well as of $G(\lambda,f,m)$
 considered as  functions of $(f,m)\in\mathcal{U}_\lambda
\times\hat{ \mathbb P}$\,.
Hence, 
\begin{multline*}
    H(x;\lambda,f^\lambda)=\breve G(\lambda,\nabla f^\lambda,m^\lambda)=
\inf_{f\in\mathcal{U}_\lambda}\sup_{m\in\hat{\mathbb
    P}}\breve G(\lambda,\nabla f,m)
=\inf_{f\in\mathcal{U}_\lambda}\sup_{m\in\mathbb     P}
G(\lambda,f,m)\\=\inf_{f\in\mathcal{U}_\lambda}\sup_{x\in\R^l}
H(x;\lambda,f)=F(\lambda)\,,
\end{multline*}
so $\tilde f^\lambda$ satisfies the Bellman equation \eqref{eq:86}.
As a result, under the hypotheses of Fleming and Sheu
\cite{FleShe02},
 $\tilde f^\lambda$ is bounded below by an affine function
when $\hat\lambda\in(0,1)$\,. Condition \eqref{eq:92} is implied by
the condition that the matrix
$(b\sigma^TP_1(\hat\lambda))^Tc^{-1}b\sigma^TP_1(\hat\lambda)-
(A_1-\mathbf 1r_1^T)^Tc^{-1}(A_1-\mathbf 1r_1^T)$ is negative definite.

Furthermore, one can see that
\begin{multline}
  \label{eq:155}
    F(\lambda)=\frac{1}{2}\,\norm{p_2(\lambda)}^2_{\sigma\sigma^T}
+\frac{1}{2}\,
\frac{\lambda}{1-\lambda}\,
\norm{a_2-r_2\mathbf 1-\lambda
  b\beta+b\sigma^T p_2(\lambda)
}^2_{c^{-1}}\\+
(
-\lambda\beta^T\sigma^T+\theta_2^T)p_2(\lambda)
+\lambda(r_2-\alpha_2+\frac{1}{2}\,\abs{\beta}^2)+
\frac{1}{2}\,\lambda^2\abs{\beta}^2+
\frac{1}{2}\,\text{tr}\,(\sigma\sigma^TP_1(\lambda))\,.
\end{multline}
If $\hat\lambda<1$\,, equation \eqref{eq:69}  is as follows
\begin{equation*}
     \hat    u(x)=\frac{1}{1-\hat\lambda}\,c^{-1}\bl(A_1-\mathbf 1r_1^T
+b\sigma^T  P_1(\hat\lambda))x+
\frac{1}{1-\hat\lambda}\,c^{-1}\bl(a_2-r_2\mathbf1-
\hat\lambda b\beta+b\sigma^T p_2(\hat\lambda)\br)\,.
\end{equation*}
and 
$J_q=F(\hat\lambda)$\,.
If $\hat\lambda=1$\,, then one may
   look, once again, for $\hat f(x)=x^TP_1(1)x/2+p_2(1)^Tx$\,.  
Substitution in \eqref{eq:134} yields
\begin{subequations}
  \begin{align}
  \label{eq:130}
A_1-\mathbf 1 r_1^T+b\sigma^T P_1(1)=0\,,\\
  \label{eq:133}
a_2-r_2\mathbf 1-b\beta+b\sigma^T p_2(1)=0\,.
\end{align}
\end{subequations}
(One can also obtain \eqref{eq:130} by multiplying \eqref{eq:78}
through with $1-\lambda$ and taking a formal limit as
$\lambda\uparrow1$.)
If those conditions hold, choosing $\hat f(x)$ quadratic is justified. 
An optimal control is $\hat u(x)=c^{-1}(b\beta+\hat v)$\,, with $\hat
v$ coming from the range of $b^T$ and with $\abs{\hat v}^2/2=
q-d/d\lambda\, \breve F(\lambda,\hat m)\Big|_{1-}$\,. 

With  $\tilde\lambda$ representing the supremum of $\lambda$ such that
$P_1(\lambda)$ exists and  $D(\lambda)$ is stable, one  has that 
$\tilde \lambda \le\overline\lambda$\,. 
 Pham \cite{Pha03} shows that,
in the one--dimensional case, under broad assumptions,
$\tilde\lambda=\overline\lambda$ and
$F(\lambda)$ is differentiable on $(-\infty,\overline\lambda)$\,, 
 both cases that $\overline\lambda<1$ and
$\overline \lambda=1$ being realisable. 
The hypotheses in Pham \cite{Pha03}, however, rule out the possibility that
$\hat\lambda=1$\,.
In the appendix, we complete  the analysis of Pham \cite{Pha03} so that
 the case where $\hat\lambda=1$ is realised too.

Bounds \eqref{eq:58}
  and \eqref{eq:9}    of Theorem \ref{the:bounds}
are available in Puhalskii and Stutzer \cite{PuhStu16} who use a
different definition of $H(x;\lambda,f)$\,. They also assume a
more general stability condition than in \eqref{eq:45} for \eqref{eq:58} and
provide more detail on the relation to earlier results for the
underperformance probability optimisation.
Theorem \ref{the:bounds}  improves on the results in Puhalskii
\cite{Puh11} by doing away with a certain growth requirement on 
$\abs{\pi_t}$ (see (2.12) in Puhalskii \cite{Puh11}).
 Maximising the probability of
outperformance for a one-dimensional model
 is studied in Pham \cite{Pha03}, who, however, stops short of proving
  the asymptotic optimality of
 $\hat\pi$ and produces nearly optimal portfolios instead. 
Besides, the requirements  in Pham \cite{Pha03}  amount to
 $F(\lambda)$ being essentially smooth,  the
portfolio's wealth   growing no 
faster  than linearly with the economic factor
(see 
condition in (2.5) in Pham \cite{Pha03}) and 
 $\theta_2=0$\,. On the other hand, 
it is not assumed  in  Pham \cite{Pha03} that
$\beta$ does not belong to the sum of the ranges of $b^T$ and $\sigma^T$\,,
which property is required by our  condition (N).

Most of the results on the  risk--sensitive optimisation concern the
case of a negative Hara parameter.  
Theorem 4.1 in Nagai \cite{Nag03}  obtains asymptotic optimality of
$\pi(\lambda)$\,, rather than asymptotic $\epsilon$--optimality,
 for a nonbenchmarked setup under a number of additional
conditions, e.g.,   the interest rate is bounded and the following
version of \eqref{eq:31} is required:
$\norm{b(x)\sigma(x)^T\nabla\hat
  f(x)}^2_{c(x)^{-1}}
-\norm{a(x)-r(x)\mathbf1}^2
_{c(x)^{-1}}\to -\infty$\,, as $\abs{x}\to\infty$\,. (Unfortunately,
there are pieces of undefined notation such as $u(0,x;T)$\,.)
Affine models are considered in
  Bielecki and Pliska \cite{BiePli99},
\cite{BiePli04}, 
Kuroda and Nagai \cite{KurNag02}, for the nonbenchmarked case,
 and  Davis and Lleo \cite{DavLle08}, for
the benchmarked case.
 Fleming and Sheu  \cite{FleShe00}, \cite{FleShe02}
allow $\lambda$ to assume either sign. Although the latter authors correctly
 identify the limit quantity in Theorem
\ref{the:risk-sens}  as the righthand side of
an ergodic Bellman equation,
they  prove  neither that $F(\lambda)$ is  the limit of $(1/t)\ln \mathbf
Ee^{\lambda t L^{\pi^\lambda}_t}$ nor that $F(\lambda)$ is an asymptotic
bound for an arbitrary portfolio. Rather, they prove that
$F(\lambda)$  can be obtained as the limit of the 
optimal growth rates associated with bounded portfolios as the bound
constraint is being relaxed. They also require that $\lambda$ be
sufficiently small, if positive.
The assertion of part 1 of  Theorem
\ref{the:risk-sens} has not been available in this generality even for
the affine model, Theorem 4.1 in Pham
\cite{Pha03}  
 tackling a  case of one security.

There is another notable distinction of our results.
 It concerns the stability condition \eqref{eq:45}
 on the economic factor process. In some of the literature, 
similar conditions involve both the parameters of the factor process
and of the security price process.
For the general model in Nagai \cite{Nag12},
it is      of the form
  $\limsup_{\abs{x}\to\infty}\,
\bl(\theta(x)-\sigma(x)b(x)^Tc(x)^{-1}(a(x)-r(x)\mathbf1
)\br)^T/\abs{x}^2<0$\,, 
 for the
Gaussian model in Hata, Nagai, and Sheu \cite{Hat10},
 it is required that
that the matrix $\Theta_1-\sigma b^Tc^{-1}A_1$ be stable.
 It appears as though that imposing a stability condition on the
factor process only is more in line with the logic of the model.
A similar form of the stability condition to ours appears in Fleming
and Sheu \cite{FleShe02}. 
\section{Technical preliminaries}
\label{sec:prelim}
In this section, we lay the groundwork for the proofs of the main results.
Drawing on  Bonnans and Shapiro \cite{BonSha00}
 (see  p.14 there), we will say that 
function $h:\,\mathbb T\to\R$\,, with $\mathbb T$ representing a topological space, is 
$\inf$--compact (respectively, $\sup$--compact) if the sets
$\{x\in \mathbb T:\,h(x)\le \delta\}$ (respectively, 
the sets $\{x\in \mathbb T:\,h(x)\ge \delta\}$) are compact for all
$\delta\in\R$\,. (It is worth noting that
Aubin \cite{Aub93} and  Aubin and
Ekeland \cite{AubEke84} adopt a slightly different terminology by
requiring only that the sets $\{x\in \mathbb T:\,h(x)\le \delta\}$ be
relatively compact in order for $h$ to be  $\inf$--compact.
Both definitions are  equivalent if $h$ is, in addition, lower semicontinuous.)

We endow 
the set $\mathcal{P}$  of probability measures $\nu$ on $\R^l$
such that $\int_{\R^l}\abs{x}^2\,\nu(dx)<\infty$
 with the Kantorovich--Rubinstein distance 
\begin{equation*}
  d_1(\mu,\nu)=\sup\{\abs{\int_{\R^l}g(x)\,\mu(dx)-\int_{\R^l}g(x)\,\nu(dx)}:\;
\frac{\abs{g(x)-g(y)}}{\abs{x-y}}\le 1\text{ for all }x\not=y\}\,.
\end{equation*}
Convergence with respect to $d_1$ is equivalent to weak
convergence coupled with  convergence of first moments, see, e.g.,
Villani \cite{Vil09}. 
For $\kappa>0$\,, let
  $f_\kappa(x)=\kappa\abs{x}^2/2$\,, where $\kappa>0$ and
$x\in\R^l$\,, and 
 let  $\mathcal{A}_\kappa$ represent the convex hull of
$\mathbb C_0^2$ and of the function 
${f}_\kappa$\,. 
\begin{lemma}
  \label{le:sup-comp}
There exist $\kappa_0>0$ and $\lambda_0>0$ such that
if $\kappa\le\kappa_0$ and $\lambda\le\lambda_0$\,, then
 the functions
$\int_{\R^l}H(x;\lambda, f_\kappa)\,\nu(dx)$ 
and 
$\inf_{f\in\mathcal{A}_\kappa}\int_{\R^l}H(x;\lambda, f)\,\nu(dx)$
are $\sup$--compact 
in $\nu\in \mathcal{P}$ for the Kantorovich--Rubinstein distance $d_1$\,.
\end{lemma}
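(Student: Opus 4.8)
The plan is to reduce both assertions to one pointwise estimate: for $\kappa_0$ and $\lambda_0$ chosen small enough, whenever $\kappa\le\kappa_0$ and $\lambda\le\lambda_0$ one has $H(x;\lambda,f_\kappa)\le -c_1\abs{x}^2+c_2$ for all $x\in\R^l$, with $c_1>0$ and $c_2\in\R$ depending on $\kappa$ and $\lambda$. Granting this, if $\int_{\R^l}H(x;\lambda,f_\kappa)\,\nu(dx)\ge\delta$ then $\int_{\R^l}\abs{x}^2\,\nu(dx)\le(c_2-\delta)/c_1$, so on the superlevel set $\{\nu:\int H(\cdot;\lambda,f_\kappa)\,d\nu\ge\delta\}$ the second moments are uniformly bounded; hence $\abs{x}$ is uniformly integrable over it and the set is relatively compact in $(\mathcal P,d_1)$, while every $d_1$--limit of its members again has finite second moment (Fatou). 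Closedness follows since $H(\cdot;\lambda,f_\kappa)$ is continuous and bounded above, so $\nu\mapsto\int H(\cdot;\lambda,f_\kappa)\,d\nu$ is the infimum over $M$ of the weakly continuous maps $\nu\mapsto\int\bl(H(\cdot;\lambda,f_\kappa)\vee(-M)\br)\,d\nu$, hence upper semicontinuous on $(\mathcal P,d_1)$. Thus the superlevel set is compact, i.e.\ the first function is $\sup$--compact.

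To obtain the quadratic bound, I would substitute $\nabla f_\kappa(x)=\kappa x$ and $\nabla^2 f_\kappa(x)=\kappa I_l$ in \eqref{eq:59}, \eqref{eq:80} (using \eqref{eq:64} to evaluate the supremum in \eqref{eq:40}); after regrouping terms this gives
\[
H(x;\lambda,f_\kappa)=\tfrac12\,\kappa^2\abs{\sigma(x)^Tx}^2+\kappa\,\theta(x)^Tx
+\tfrac{\lambda}{2(1-\lambda)}\,\norm{a(x)-r(x)\ind-\lambda b(x)\beta(x)+\kappa b(x)\sigma(x)^Tx}^2_{c(x)^{-1}}+\rho(x),
\]
where the remainder $\rho(x)=-\kappa\lambda\,\beta(x)^T\sigma(x)^Tx+\lambda(r(x)-\alpha(x))+O(1)$ is at most linear in $\abs{x}$, by the linear--growth hypothesis and the boundedness of $b,\sigma,c^{-1},\beta$. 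Now $\abs{\sigma(x)^Tx}^2\le C_\sigma\abs{x}^2$ with $C_\sigma=\sup_x\norm{\sigma(x)\sigma(x)^T}$; the stability condition \eqref{eq:45} supplies $c_0>0$ and $R_0$ with $\theta(x)^Tx\le -c_0\abs{x}^2$ for $\abs{x}\ge R_0$; and $\abs{a(x)-r(x)\ind-\lambda b(x)\beta(x)+\kappa b(x)\sigma(x)^Tx}\le C(1+\abs{x})$ with $C$ bounded uniformly for $\lambda\le\lambda_0<1$. Hence, for $\abs{x}\ge R_0$, the quadratic part of $H(x;\lambda,f_\kappa)$ is at most $\bl(\tfrac12\kappa^2C_\sigma-\kappa c_0+\tfrac{\lambda}{2(1-\lambda)}C^2\br)\abs{x}^2$. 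Taking $\kappa_0$ so small that $\tfrac12\kappa_0C_\sigma<\tfrac12 c_0$ already settles $\lambda\le0$, where the last term is nonpositive; choosing $\lambda_0$ small enough (relative to $\kappa_0$) that the positive quadratic contribution $\tfrac{\lambda}{2(1-\lambda)}C^2$ is dominated by the margin left in $\kappa c_0-\tfrac12\kappa^2C_\sigma$ for $0<\lambda\le\lambda_0$, the coefficient becomes strictly negative, and absorbing the $O(1+\abs{x})$ remainder for $\abs{x}$ large yields the bound. Reconciling the quadratic blow--up of $R_\lambda$ for positive $\lambda$ with the quadratic decay furnished by \eqref{eq:45} — i.e.\ pinning down how small $\lambda_0$ must be relative to $\kappa_0$ — is the one genuinely constraining step, and is the heart of the lemma.

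For the second function, $f_\kappa\in\mathcal A_\kappa$ gives $\inf_{f\in\mathcal A_\kappa}\int H(\cdot;\lambda,f)\,d\nu\le\int H(\cdot;\lambda,f_\kappa)\,d\nu$, so the superlevel set $\{\nu:\inf_{f\in\mathcal A_\kappa}\int H(\cdot;\lambda,f)\,d\nu\ge\delta\}$ is contained in $\{\nu:\int H(\cdot;\lambda,f_\kappa)\,d\nu\ge\delta\}$, which by the first part is compact and carries a uniform second--moment bound $\int\abs{x}^2\,d\nu\le C(\delta)$; in particular the former set is relatively compact in $(\mathcal P,d_1)$ and its $d_1$--limits remain in $\mathcal P$. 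To see it is closed one uses that each $f\in\mathcal A_\kappa$ has the form $f=(1-t)g+tf_\kappa$ with $g\in\mathbb C_0^2$, $t\in[0,1]$, so that $H(x;\lambda,f)=H(x;\lambda,f_{t\kappa})$ for $x$ outside the (compact) support of $g$; thus the only part of $H(\cdot;\lambda,f)$ that fails to decay is controlled by the estimate of the second paragraph applied with $t\kappa$ in place of $\kappa$, and, restricted to the enveloping compact set (on which $d_1$--convergence forces convergence of $\int h\,d\nu$ for continuous $h$ of growth $o(\abs{x}^2)$ by uniform integrability of $\abs{x}^p$, $p<2$), one checks that the intersection over $f\in\mathcal A_\kappa$ of the sets $\{\nu:\int H(\cdot;\lambda,f)\,d\nu\ge\delta\}$ is closed, hence compact. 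Therefore $\nu\mapsto\inf_{f\in\mathcal A_\kappa}\int H(\cdot;\lambda,f)\,d\nu$ is $\sup$--compact as well.
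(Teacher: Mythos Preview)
Your proof is correct and follows essentially the same approach as the paper: derive the pointwise bound $H(x;\lambda,f_\kappa)\le K_1-K_2\abs{x}^2$ by balancing the $\kappa^2$, $\kappa\,\theta(x)^Tx$, and $\lambda/(1-\lambda)$ contributions, then deduce a uniform second--moment bound on superlevel sets and combine relative compactness with upper semicontinuity.

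One simplification for the last paragraph: once you have observed that each $f\in\mathcal A_\kappa$ agrees with $f_{t\kappa}$ outside a compact set (so $H(\cdot;\lambda,f)$ is continuous and bounded above), the truncation argument you already gave for $f_\kappa$ applies verbatim --- $\nu\mapsto\int H(\cdot;\lambda,f)\,d\nu$ is the decreasing limit of the $d_1$--continuous maps $\nu\mapsto\int\bl(H(\cdot;\lambda,f)\vee(-M)\br)\,d\nu$, hence upper semicontinuous, and the infimum over $f\in\mathcal A_\kappa$ inherits this. This is exactly what the paper does (via Fatou's lemma). Your detour through ``$d_1$--convergence forces convergence of $\int h\,d\nu$ for $h$ of growth $o(\abs{x}^2)$'' is unnecessary and in fact does not apply as stated, since $H(\cdot;\lambda,f)$ is of order $-\abs{x}^2$, not $o(\abs{x}^2)$; but only upper semicontinuity is needed, and the simpler argument delivers it.
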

\begin{proof}
By \eqref{eq:80} and \eqref{eq:59}, for $\lambda<1$\,,
\begin{equation*}
    H(x;\lambda,  f_\kappa)=
\frac{\kappa^2}{2}\,x^T
T_\lambda(x) x
+\kappa S_\lambda(x)x+R_\lambda(x)+\text{tr}(\sigma(x)\sigma(x)^T)\,.
%
\end{equation*}
By \eqref{eq:45}, \eqref{eq:84}, \eqref{eq:84a}, and
\eqref{eq:84b},  as $\abs{x}\to\infty$\,, if $\kappa$ is small, then
the dominating term in $(\kappa^2/2)\,x^T
T_\lambda(x) x$ is of order $\kappa^2\abs{x}^2$\,,
 the dominating terms in 
$\kappa S_\lambda(x)x$ are of orders
$(\lambda/(1-\lambda))\,\kappa\abs{x}^2$ and $-\kappa\abs{x}^2$\,, and the
dominating term in $R_\lambda(x)$ is of order 
$(\lambda/(1-\lambda))\,\abs{x}^2$\,. If $\kappa$ is small
enough, then $-\kappa\abs{x}^2$ dominates  $\kappa^2\abs{x}^2$\,.
For those $\kappa$\,, $(\lambda/(1-\lambda))\,\abs{x}^2$ is dominated
by $-\kappa\abs{x}^2$ if $\lambda$ is small relative to $\kappa$\,.
We conclude that, provided $\kappa$ is small enough, 
there exist $\lambda_0>0$\,,
$K_1$\,, and $K_2>0$\,, such that
\begin{equation}
  \label{eq:23a}
 H(x;\lambda,f_\kappa)\le K_1-K_2\abs{x}^2\,,
\end{equation}
 for all
$\lambda\le \lambda_0$\,.
Therefore,  given $\delta\in\R$\,,
$\sup_{\nu\in\Gamma_\delta}\int_{\R^l}\abs{x}^2\,\nu(dx)<\infty$\,,
where 
$\Gamma_\delta=  \big\{\nu:\,
 \int_{\R^l} H(x;\lambda,f_\kappa)\,\nu(dx)\ge \delta\big\}
$\,. 
In addition, by $H(x;\lambda, f_\kappa)$ being  continuous
in $x$ and Fatou's lemma,
$\int_{\R^l}H(x;\lambda, f_\kappa)\,\nu(dx)$ is an upper semicontinuous
function of $\nu$\,, so $\Gamma_\delta$ is a closed set.
Thus, by Prohorov's theorem,
  $\Gamma_\delta$   is compact.
If $f\in\mathcal{A}_\kappa$\,, then, in view of Fatou's lemma,
\eqref{eq:80}, \eqref{eq:59},
 and \eqref{eq:23a},
  the function $\int_{\R^l}H(x;\lambda, f)\,\nu(dx)$
is upper semicontinuous in $\nu$\,.
Since $ f_\kappa\in\mathcal{A}_\kappa$\,,
we obtain that $\inf_{f\in\mathcal{A}_\kappa}\int_{\R^l}H(x;\lambda, f)\,\nu(dx)$
is $\sup$--compact. 

\end{proof}

\begin{lemma}
  \label{le:minmax}
If $\lambda<1$ and  $F(\lambda)<\infty$\,,  then the infimum 
in \eqref{eq:29} is attained
at  $\mathbb C^2$--function $f^\lambda$ that  satisfies
the Bellman equation \eqref{eq:86} and belongs to $\mathbb C^1_\ell$\,. 
In addition,
the function $F(\lambda)$ is  lower semicontinuous 
and  $F(0)=0$\,.
\end{lemma}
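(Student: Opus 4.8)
The identity $F(0)=0$ I would establish directly. Taking $f\equiv0$ in \eqref{eq:29} gives $H(x;0,f)=0$ for all $x$ (by \eqref{eq:84b}, $R_0\equiv0$, and the remaining terms of \eqref{eq:80} and \eqref{eq:59} vanish), so $F(0)\le0$. For the reverse inequality I would show that no $f\in\mathbb C^2$ has $\sup_xH(x;0,f)<0$: if $c:=-\sup_xH(x;0,f)>0$, then, by \eqref{eq:80} and \eqref{eq:59}, the generator $\mathcal L f=\theta^T\nabla f+\tfrac12\,\mathrm{tr}(\sigma\sigma^T\nabla^2f)$ of \eqref{eq:14} satisfies $\mathcal L f\le H(x;0,f)\le-c$, whence $g:=e^{f}>0$ obeys $\mathcal L g=e^{f}H(x;0,f)\le-cg<0$; localising with the exit times of balls, non-explosion, and monotone convergence give $\mathbf E_0\int_0^\infty g(X_s)\,ds\le g(0)/c<\infty$, which contradicts the fact that, by \eqref{eq:45} and the uniform ellipticity of $\sigma\sigma^T$, $X$ is positive recurrent and hence spends an infinite amount of time in any neighbourhood of $0$, on which $g$ is bounded away from $0$. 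Thus $F(0)=0$.

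For the existence of the minimiser and the Bellman equation \eqref{eq:86}, fix $\lambda<1$ with $F(\lambda)<\infty$. The plan is to pass from the primal formulation \eqref{eq:136} to a dual one by a minimax theorem: the $\sup$--compactness supplied by Lemma \ref{le:sup-comp} (for the range of $\lambda$ it covers, the general case $\lambda<\overline\lambda$ being reached by a continuation argument in $\lambda$ exploiting the convexity of $F$, or, alternatively, by approximating the ergodic problem on balls $B_R$ and letting $R\to\infty$), together with the convexity of $H(x;\lambda,\cdot)$ and linearity in $\nu$, yields $F(\lambda)=\sup_{\nu\in\mathcal P}\inf_{f}\int_{\R^l}H(x;\lambda,f)\,\nu(dx)$ with the outer supremum attained at some $\nu^\lambda$. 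Finiteness of the value forces $\nu^\lambda$ to have a density $m^\lambda\in\hat{\mathbb P}$ and permits the integration by parts carrying \eqref{eq:62} into \eqref{eq:11}; justifying this integration by parts is the first delicate point. Then $\breve F(\lambda,m^\lambda)$ in \eqref{eq:13} is a strictly convex, coercive minimisation over $\nabla f\in\mathbb L^{1,2}_0(\R^l,\R^l,m^\lambda(x)\,dx)$ -- coercivity and strict convexity coming from the uniform positive definiteness of $T_\lambda$ granted by condition (N) -- so it has a unique minimiser $\nabla\hat f$. Its Euler--Lagrange equation in $\nabla f$ is the Fokker--Planck equation \eqref{eq:104'} for $m^\lambda$; De Giorgi--Nash--Moser theory (coefficients locally bounded, $\sigma\sigma^T$ uniformly elliptic and twice differentiable) makes $m^\lambda$ locally H\"older continuous and, by Harnack, strictly positive on $\R^l$. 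The Euler--Lagrange equation in $m$ at the saddle forces $\breve H(x;\lambda,\nabla\hat f(x))+\tfrac12\,\mathrm{tr}(\sigma\sigma^T\nabla^2\hat f)=F(\lambda)$ in the weak sense; an elliptic bootstrap ($\mathbb W^{2,p}_{\mathrm{loc}}$ estimates, then Schauder, the nonlinearity being smooth and quadratic in the gradient) upgrades $\hat f$ to a $\mathbb C^2$ function, and the positivity of $m^\lambda$ promotes the identity to hold at every $x$. Setting $f^\lambda=\hat f$ gives a minimiser of \eqref{eq:29} satisfying \eqref{eq:86}.

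The claim $f^\lambda\in\mathbb C^1_\ell$, i.e.\ $\abs{\nabla f^\lambda(x)}\le C(1+\abs x)$, is what I expect to be the main obstacle. I would first record two-sided quadratic bounds $\abs{f^\lambda(x)}\le C_1\abs x^2+C_2$: since $T_\lambda$, $S_\lambda$, $R_\lambda$ grow at most like $\abs x^2$ (by the linear--growth hypotheses on the coefficients) and $\theta(x)^Tx\le-c\abs x^2$ for large $\abs x$ by \eqref{eq:45}, suitable multiples of $f_\kappa=\kappa\abs x^2/2$ are, respectively, a sub- and a super-solution of \eqref{eq:86}, and comparison pins $f^\lambda$ between them. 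A localised Bernstein-type estimate then gives the gradient bound: on the unit ball centred at $x_0$, \eqref{eq:86} is uniformly elliptic with first- and zeroth-order data of size $O(\abs{x_0})$ and $O(\abs{x_0}^2)$ and with $\mathrm{osc}\,f^\lambda=O(\abs{x_0}^2)$, so the interior gradient bound for such quasilinear equations yields $\abs{\nabla f^\lambda(x_0)}\le C(1+\abs{x_0})$; equivalently, one differentiates \eqref{eq:86}, derives the equation for $\abs{\nabla f^\lambda}^2$, and closes the estimate via the maximum principle using the favourable sign of the $-\abs{\nabla^2f^\lambda}^2$ term. The same argument shows that every $f\in\mathbb C^2$ with $\sup_xH(x;\lambda,f)<\infty$ and $\lambda<1$ lies in $\mathbb C^1_\ell$, so the infimum in \eqref{eq:29} is unchanged if taken over $\mathbb C^2\cap\mathbb C^1_\ell$.

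Finally, lower semicontinuity of $F$. It is convex on $\R$ and finite on $(-\infty,\overline\lambda)$, hence continuous there; on $(\overline\lambda,\infty)$ it is identically $+\infty$, so lower semicontinuity can fail only at $\overline\lambda$. There it follows by taking near-minimisers $f_n$ for $\lambda_n\uparrow\overline\lambda$ (if $\lim_{\lambda\uparrow\overline\lambda}F(\lambda)=+\infty$ there is nothing to prove), using the uniform a priori bounds of the previous paragraph to extract a subsequence with $f_n\to f$ in $\mathbb C^2_{\mathrm{loc}}$, and passing to the limit in \eqref{eq:86} with the help of the lower semicontinuity of $\breve H(x;\,\cdot\,,\,\cdot\,)$ to get $\sup_xH(x;\overline\lambda,f)\le\lim_{\lambda\uparrow\overline\lambda}F(\lambda)$, whence $F(\overline\lambda)\le\liminf_{\lambda\to\overline\lambda}F(\lambda)$. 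Together with the convexity-based continuity on the interior of the domain, this gives lower semicontinuity of $F$ on all of $\R$.
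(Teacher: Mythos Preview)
Your argument for $F(0)=0$ is a clean alternative to the paper's and is correct: the identity $\mathcal L(e^f)=e^fH(\cdot;0,f)$ combined with positive recurrence of $X$ under \eqref{eq:45} does the job. The paper instead integrates $H(\cdot;0,f)$ against the invariant density of \eqref{eq:14} and reaches a contradiction by the integration by parts \eqref{eq:10}; your route is arguably more probabilistic and equally short.

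The existence part, however, has a genuine gap. You propose to obtain $f^\lambda$ from the saddle point $(\nabla\hat f,m^\lambda)$ of $\breve G(\lambda,\cdot,\cdot)$ and then bootstrap to $\mathbb C^2$. Two problems arise. First, the minimax step rests on the $\sup$--compactness of Lemma~\ref{le:sup-comp}, which is only available for $\lambda\le\lambda_0$; your ``continuation in $\lambda$ via convexity of $F$'' is not a substitute (convexity of $F$ gives no compactness for the maximisation in $\nu$), and your alternative ``approximate on balls and let $R\to\infty$'' is precisely the direct PDE approach and no longer a duality argument. Second, even granting a saddle point, the minimiser $\nabla\hat f$ is a priori only an element of $\mathbb L^{1,2}_0(\R^l,\R^l,m^\lambda\,dx)$, so the ``weak Bellman equation'' you write has a quadratic nonlinearity in a gradient that is merely $L^2_{\text{loc}}$; an elliptic bootstrap for $-\text{tr}(\sigma\sigma^T\nabla^2f)=g(x,\nabla f)$ with natural (quadratic) growth in $\nabla f$ does not start from $H^1_{\text{loc}}$ data alone and needs an a priori $L^\infty$ or H\"older input you have not supplied. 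It is also worth noting that in this paper the saddle--point Lemma~\ref{le:saddle_3} \emph{uses} Lemma~\ref{le:minmax} as input (through the function $f^{\hat\lambda}$), so reversing the order of deduction would be circular here.

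The paper takes the direct PDE route, following Kaise--Sheu and Ichihara: for each $\epsilon>0$ one manufactures a supersolution $f^{(1)}_\epsilon$ with $H(\cdot;\lambda,f^{(1)}_\epsilon)<F(\lambda)+\epsilon$ and a subsolution $f^{(2)}_\epsilon$ on any ball via a linear Dirichlet problem, solves the quasilinear Dirichlet problem $H(\cdot;\lambda,f)=F(\lambda)+\epsilon$ on balls $S'$ with boundary data $f_\kappa$ (Ladyzhenskaya--Uraltseva), and lets the radius tend to infinity using the interior gradient bound of Lemma~2.4 in Kaise--Sheu to get $f_\epsilon$; then $\epsilon\downarrow0$ with the same gradient bound yields $f^\lambda\in\mathbb C^2$ solving \eqref{eq:86}. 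The linear--growth of $\nabla f^\lambda$ is read off from that same Bernstein estimate (Remark~2.5 in Kaise--Sheu), not from a comparison with $\kappa|x|^2$; your comparison argument is unclear precisely when $\lambda\in(\lambda_0,\overline\lambda)$, because $H(\cdot;\lambda,f_\kappa)$ need not be bounded above then. Lower semicontinuity is obtained in the paper just as you outline---extracting limits of Bellman solutions along $\lambda_i\to\lambda$---but the compactness comes from those uniform PDE gradient estimates rather than from the comparison bounds you propose.
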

\begin{proof}
Let us assume that 
 $F(\lambda)>-\infty$\,. Applying the reasoning on
 pp.289--294 in Kaise and Sheu \cite{KaiShe06}, one can see that, for
 arbitrary $\epsilon>0$\,,
 there exists $\mathbb C^2$--function $ f_\epsilon$ 
such that, for all $x\in\R^l$\,,
$    H(x;{\lambda},  f_\epsilon)
=F(\lambda)+\epsilon$\,.
Considering that some details are omitted 
in Kaise and Sheu \cite{KaiShe06}, we give 
an outline of the proof, following the lead of Ichihara \cite{Ich11}.
As $F(\lambda)<\infty$\,, by \eqref{eq:29},
 there exists function $ f^{(1)}_\epsilon\in \mathbb C^2$ such that
$ H(x;\lambda,  f^{(1)}_\epsilon)<
F(\lambda)+\epsilon$ 
for all $x$\,. 
Given  open ball $S$\,, centred at the origin,
 by Theorem 6.14 on p.107 in Gilbarg and
Trudinger \cite{GilTru83}, there exists 
$\mathbb C^2$--solution $ f^{(2)}_\epsilon$ to the linear
elliptic boundary value problem
$ H(x;\lambda, f) -(1/2)\nabla f(x)^TT_\lambda(x)\nabla f(x)
=F(\lambda)+2\epsilon$ when $x\in S$ and
  $f(x)= f_\kappa(x)$
when $x\in\partial S$\,, with $\partial S$ standing for
the boundary of $S$\,. 
Therefore, 
$ H(x;\lambda, f_\epsilon^{(2)})>
F(\lambda)+\epsilon$ in $S$\,.
By Theorem 8.4 on p.302 of Chapter 4 in 
Ladyzhenskaya and Uraltseva \cite{LadUra68}, for any ball $S'$
contained in $S$ and centred at the origin, there exists 
$\mathbb C^{2}$--solution $f^{(3)}_{\epsilon,S'}$ to
the boundary value problem $ H(x;\lambda, f)=
F(\lambda)+\epsilon$ in $S'$ and $f(x)= f_\kappa(x)$ on $\partial S'$\,.
Since $f_{\epsilon,S'}^{(3)}$ solves the boundary value problem 
$(1/2)\text{tr}\,(\sigma(x)\sigma(x)^T\nabla^2f(x))=
-\breve H(x;\lambda,\nabla f^{(3)}_{\epsilon,S'}(x))+F(\lambda)+\epsilon$ when $x\in
S'$ and
$f(x)=f_\kappa(x)$ when $x\in \partial S'$\,, we have by Theorem 6.17 on
p.109 of Gilbarg and Trudinger \cite{GilTru83} that 
$f_{\epsilon,S'}^{(3)}(x)$ is thrice continuously differentiable.
Letting the radius of $S'$ (and that of
$S$) go to infinity, we have, by p.294 in Kaise and Sheu
\cite{KaiShe06}, see also Proposition 3.2 in Ichihara \cite{Ich11},
  that the $f^{(3)}_{\epsilon,S'}$ converge locally uniformly and 
 in $\mathbb
W^{1,2}_{\text{loc}}(\R^l)$ to $f_\epsilon$ which is a weak solution to
$H(x;\lambda,f)=F(\lambda)+\epsilon$\,. 
Furthermore, by Lemma 2.4 in Kaise and Sheu \cite{KaiShe06}, the 
$\mathbb W^{1,\infty}(S'')$--norms of the $f^{(3)}_{\epsilon,S'}$ are uniformly bounded over
balls $S'$ for any fixed ball $S''$ contained in the
$S'$\,. Therefore, $f_\epsilon$ belongs to $\mathbb
W^{1,\infty}_{\text{loc}}(\R^l)$\,.
By Theorem 6.4 on p.284 in Ladyzhenskaya and Uraltseva
\cite{LadUra68},
$f_\epsilon$ is thrice continuously differentiable.

As in Theorem 4.2 in Kaise and Sheu
 \cite{KaiShe06}, by using the gradient bound in Lemma 2.4 there
(which proof does require
 $f_\epsilon$ to be thrice continuously differentiable), we
 have that the  $f_\epsilon$ converge along a
 subsequence  uniformly on compact
 sets  as $\epsilon\to0$ to
 a $\mathbb C^2$--solution of $    H(x;\lambda,  f)
=F(\lambda)$\,.
 That solution, which we denote by $f^\lambda$\,, 
delivers the infimum in
 \eqref{eq:29}
and satisfies the Bellman equation, with 
 $\nabla f^\lambda(x)$ obeying the linear growth condition,
 see Remark 2.5 in Kaise and Sheu \cite{KaiShe06}.
If we assume that $F(\lambda)=-\infty$\,, then the above reasoning
shows that there exists a solution to $    H(x;\lambda,  f)
=-K$\,, for all great enough  $K$ which leads to a
contradiction by the argument of the proof of Theorem 2.6 in Kaise and
Sheu \cite{KaiShe06}.


 We prove that $F$  is a lower semicontinuous  function.
   Let
$\lambda_i\to\lambda<1$\,, as $i\to\infty$\,, and 
 let the $F(\lambda_i)$ converge to a finite quantity. 
By the part just proved, there exist $\tilde f_i\in\mathbb C^2$ such that
$H(x;\lambda_i, \tilde f_i)=F(\lambda_i)$\,, for all $x$\,.
Furthermore, by a similar reasoning to the one used above
the sequence $\tilde f_i$ is
relatively compact in $\mathbb L^{\infty}_{\text{loc}}(\R^l)\cap \mathbb
W^{1,2}_{\text{loc}}(\R^l)$\, with limit points being in 
$\mathbb W^{1,\infty}_{\text{loc}}(\R^l)$ as well. Subsequential limit
 $\tilde f$ 
is a $\mathbb C^2$-function such that
$ H(x;\lambda,\tilde  f)=\lim_{i\to\infty}F(\lambda_i)$\,.
By \eqref{eq:29}, 
$F(\lambda)$
is the smallest $\Lambda$ such that there exists $\mathbb
C^2$--function $f$
 that satisfies
 the equation 
$ H(x;\lambda, f)=\Lambda  
$\,, for all $x\in\R^l$\,.
Hence,
$\lim_{i\to\infty}F(\lambda_i)\ge F(\lambda)$\,. 
The function $F(\lambda)$ is lower semicontinuous at $\lambda=1$ by definition.

We prove that $F(0)=0$\,. 
Taking $f(x)=0$ in \eqref{eq:29} yields $F(0)\le0$\,. 
Suppose that $F(0)<0$ and let $f\in\mathbb C^2\cap \mathbb C^1_\ell$
be such that, for all $x\in\R^l$\,,
\begin{equation}
  \label{eq:7}
  \nabla f(x)^T\,\theta(x)
+\frac{1}{2}\,\abs{\sigma(x)^T\nabla f(x)}^2
+\frac{1}{2}\, \text{tr}\bl({\sigma(x)}{\sigma(x)}^T\nabla^2 f(x)<0\,.
\end{equation}
By
\eqref{eq:45}, there exists density $m\in\hat{\mathbb P}$ such that
\begin{equation}
  \label{eq:83}
    \int_{\R^l}\bl(\nabla h(x)^T\,\theta(x)
+\frac{1}{2}\, \text{tr}\,\bl({\sigma(x)}{\sigma(x)}^T\nabla^2
h(x)\br)\br)
\,m(x)\,dx=0\,,
\end{equation}
for all $h\in\mathbb C_0^2$\,, see, e.g., Corollary 1.4.2 in
 Bogachev, Krylov, and  R\"eckner \cite{BogKryRoc}. 
By \eqref{eq:7}, $\int_{\R^l}\bl(\nabla f(x)^T\,\theta(x)
+(1/2) \text{tr}\,\bl({\sigma(x)}{\sigma(x)}^T\nabla^2
f(x)\br)\br)
\,m(x)\,dx$ is well defined, being possibly equal to $-\infty$
 and, by monotone convergence,
 \begin{multline*}
   \int_{\R^l}\bl(\nabla f(x)^T\,\theta(x)
+\frac{1}{2}\, \text{tr}\,\bl({\sigma(x)}{\sigma(x)}^T\nabla^2
f(x)\br)\br)
\,m(x)\,dx\\
=\lim_{R\to\infty}
\int_{x\in\R^l:\,\abs{x}\le R}\bl(\nabla f(x)^T\,\theta(x)
+\frac{1}{2}\, \text{tr}\,\bl({\sigma(x)}{\sigma(x)}^T\nabla^2
f(x)\br)\br)
\,m(x)\,dx\,.
 \end{multline*}
By integration by parts,
\begin{multline*}
  \int_{x\in\R^l:\,\abs{x}\le R}\bl(\nabla f(x)^T\,\theta(x)
+\frac{1}{2}\, \text{tr}\,\bl({\sigma(x)}{\sigma(x)}^T\nabla^2
f(x)\br)\br)
\,m(x)\,dx\\=
\int_{x\in\R^l:\,\abs{x}\le R}\bl(\nabla f(x)^T\,\theta(x)
-\frac{1}{2}\,\nabla f(x)^T\,
\frac{ \text{div}\,\bl({\sigma(x)}{\sigma(x)}^Tm(x)\br)}{m(x)}
\br)
\,m(x)\,dx\\+\frac{1}{2}
\,\int_{x\in\R^l:\,\abs{x}=R}\nabla f(x)^T
{\sigma(x)}{\sigma(x)}^Td(x)m(x)\,d\tau,
\end{multline*}
with $d(x)$ denoting the unit outward normal to the sphere
$\{x\in\R^l:\,\abs{x}=R\}$ at point $x$ and with the latter integral being a
surface integral.
As $\int_{\R^l}\abs{\nabla f(x)}m(x)\,dx<\infty$\,, 
\begin{equation*}
  \liminf_{R\to\infty}\int_{x\in\R^l:\,\abs{x}=R}\abs{\nabla f(x)^T
{\sigma(x)}{\sigma(x)}^Td(x)}m(x)\,d\tau=0\,,
\end{equation*}
so letting $R\to\infty$ appropriately yields the identity
\begin{multline}
  \label{eq:10}
      \int_{\R^l}\bl(\nabla f(x)^T\,\theta(x)
+\frac{1}{2}\, \text{tr}\,\bl({\sigma(x)}{\sigma(x)}^T\nabla^2
f(x)\br)\br)
\,m(x)\,dx\\=
\int_{\R^l}\bl(\nabla f(x)^T\,\theta(x)
-\frac{1}{2}\,\nabla f(x)^T\,
\frac{ \text{div}\,\bl({\sigma(x)}{\sigma(x)}^Tm(x)\br)}{m(x)}
\br)
\,m(x)\,dx\,,
\end{multline}
implying that the lefthand side is finite.
A similar integration by parts in \eqref{eq:83} yields
\begin{equation*}
\int_{\R^l}\bl(\nabla h(x)^T\,\theta(x)
-\frac{1}{2}\,\nabla h(x)^T\,
\frac{ \text{div}\,\bl({\sigma(x)}{\sigma(x)}^Tm(x)\br)}{m(x)}
\br)
\,m(x)\,dx=0\,.
\end{equation*}
Since $m\in\hat{\mathbb P}$\,,
this identity extends to $h\in \mathbb C^2\cap \mathbb C^1_\ell$\,, so
  the righthand side of  \eqref{eq:10} equals zero, which
contradicts \eqref{eq:7}. Thus, $F(0)=0$\,.

\end{proof}
\begin{remark} 
  As a byproduct of the proof, for $\lambda<1$\,,
  \begin{equation*}
    \inf_{f\in\mathbb C^2}
\sup_{x\in\R^l}H(x;\lambda,f)=
\inf_{f\in\mathbb C^2\cap \mathbb C^1_\ell}
\sup_{x\in\R^l}H(x;\lambda,f)\,.
  \end{equation*}
\end{remark}

\begin{lemma}
  \label{le:approx}
If $\lambda<1$ and
 $\mathcal{U}_\lambda\not=\emptyset$\,, then, for $\nu\in\mathcal{P}$\,,
\begin{equation}
  \label{eq:94}
    \inf_{f\in\mathcal{U}_\lambda}
\int_{\R^l} H(x;\lambda,f)\,\nu(dx)=
\inf_{f\in\mathbb C^2_0}
\int_{\R^l} H(x;\lambda,f)\,\nu(dx)\,.
\end{equation}
\end{lemma}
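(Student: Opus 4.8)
The plan is to prove \eqref{eq:94} by two complementary truncation arguments. One should note at the outset that $\mathbb C_0^2$ is \emph{not} a subset of $\mathcal{U}_\lambda$: for compactly supported $g$ one has $H(x;\lambda,g)=R_\lambda(x)$ outside the support of $g$, and $R_\lambda$ need not be bounded above; nor is $\mathbb C_0^2$ dense in $\mathcal{U}_\lambda$ in any obvious topology. The point is that one can still pass from a function in either class to a function in the other at the price of an integral over a far--away spherical shell, and that integral is negligible because $\nu$ has a finite second moment.

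First I would record the growth estimates that make the scheme work. By \eqref{eq:80}, \eqref{eq:59}, the linear growth hypotheses, and condition (N): the matrix $T_\lambda(x)$ is bounded and uniformly positive definite, $S_\lambda(x)$ has at most linear growth, and $R_\lambda(x)$ has at most quadratic growth. Hence $\int_{\R^l}\abs{R_\lambda(x)}\,\nu(dx)<\infty$, and, since $H(x;\lambda,g)-R_\lambda(x)$ is bounded with compact support, $\int_{\R^l}H(x;\lambda,g)\,\nu(dx)$ is finite for every $g\in\mathbb C_0^2$. If $f\in\mathbb C^2\cap\mathbb C^1_\ell$, then $\abs{\nabla f(x)}\le C(1+\abs{x})$ and $\abs{f(x)}\le C(1+\abs{x}^2)$; since $p^TT_\lambda(x)p\ge0$, this gives $\breve H(x;\lambda,\nabla f(x))\ge -C(1+\abs{x}^2)$, so the defining inequality $\sup_{x\in\R^l}H(x;\lambda,f)<\infty$ of $\mathcal{U}_\lambda$ yields the one--sided bound $\text{tr}\,\bl(\sigma(x)\sigma(x)^T\nabla^2 f(x)\br)\le C(1+\abs{x}^2)$; for $f=f^\lambda$, which by Lemma \ref{le:minmax} lies in $\mathbb C^2\cap\mathbb C^1_\ell$ and satisfies $H(x;\lambda,f^\lambda)=F(\lambda)$, the same computation produces the two--sided bound $\abs{\text{tr}\,\bl(\sigma(x)\sigma(x)^T\nabla^2 f^\lambda(x)\br)}\le C(1+\abs{x}^2)$. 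Fix $\phi\in\mathbb C_0^2(\R^l)$ with $\phi\equiv1$ on $\{\abs{x}\le1\}$ and support in $\{\abs{x}\le2\}$, and set $\phi_n(x)=\phi(x/n)$, so $\abs{\nabla\phi_n}\le C/n$ and $\abs{\nabla^2\phi_n}\le C/n^2$. Note that $\nu(\{n\le\abs{x}\le2n\})\le n^{-2}\int_{\abs{x}>n}\abs{x}^2\,\nu(dx)$, which is $o(n^{-2})$ since $\int_{\R^l}\abs{x}^2\,\nu(dx)<\infty$.

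To show $\inf_{f\in\mathbb C_0^2}\int_{\R^l}H(x;\lambda,f)\,\nu(dx)\le\inf_{f\in\mathcal{U}_\lambda}\int_{\R^l}H(x;\lambda,f)\,\nu(dx)$ I would take $f\in\mathcal{U}_\lambda$ and put $g_n=\phi_n f\in\mathbb C_0^2$. On $\{\abs{x}\le n\}$ one has $H(x;\lambda,g_n)=H(x;\lambda,f)$; on $\{\abs{x}>2n\}$ one has $H(x;\lambda,g_n)=R_\lambda(x)$; on the shell $\{n\le\abs{x}\le2n\}$ one has $\abs{\nabla g_n(x)}\le Cn$ and, using $0\le\phi_n\le1$ together with the one--sided Hessian bound, $H(x;\lambda,g_n)\le Cn^2$. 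Since $\int_{\R^l}H(x;\lambda,g_n)^+\,\nu(dx)<\infty$, the integral $\int_{\R^l}H(x;\lambda,g_n)\,\nu(dx)$ is well defined in $[-\infty,\infty)$ and splits over the three regions: as $n\to\infty$ the first contribution tends to $\int_{\R^l}H(x;\lambda,f)\,\nu(dx)$ (by monotone convergence applied to the positive and negative parts, using $H(x;\lambda,f)\le\sup_{x\in\R^l}H(x;\lambda,f)<\infty$), the third tends to $0$ because $\abs{R_\lambda}$ is $\nu$--integrable, and the shell contribution is at most $Cn^2\,\nu(\{n\le\abs{x}\le2n\})=o(1)$. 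Hence $\limsup_n\int_{\R^l}H(x;\lambda,g_n)\,\nu(dx)\le\int_{\R^l}H(x;\lambda,f)\,\nu(dx)$, and taking the infimum over $f\in\mathcal{U}_\lambda$ gives the inequality.

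For the reverse inequality I would take $g\in\mathbb C_0^2$ and, for $n$ large enough that the support of $g$ lies in $\{\abs{x}\le n\}$, put $h_n=g+(1-\phi_n)f^\lambda$. Then $h_n\in\mathbb C^2\cap\mathbb C^1_\ell$, its gradient having linear growth because that of $f^\lambda$ does while $f^\lambda\nabla\phi_n$ is supported on the shell; moreover $h_n=g$ on $\{\abs{x}\le n\}$, $h_n=f^\lambda$ on $\{\abs{x}>2n\}$, and $h_n=(1-\phi_n)f^\lambda$ on the shell, where the growth estimates give $\abs{H(x;\lambda,h_n)}\le Cn^2$; hence $\sup_{x\in\R^l}H(x;\lambda,h_n)<\infty$, i.e.\ $h_n\in\mathcal{U}_\lambda$. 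Splitting $\int_{\R^l}H(x;\lambda,h_n)\,\nu(dx)$ over the same three regions, the first contribution tends to $\int_{\R^l}H(x;\lambda,g)\,\nu(dx)$ (since $H(x;\lambda,g)=R_\lambda(x)$ off the support of $g$ and $\abs{R_\lambda}$ is $\nu$--integrable), the shell contribution is $o(1)$ by $\abs{H(x;\lambda,h_n)}\le Cn^2$ there and $\nu(\{n\le\abs{x}\le2n\})=o(n^{-2})$, and the third equals $F(\lambda)\,\nu(\{\abs{x}>2n\})\to0$. Thus $\int_{\R^l}H(x;\lambda,h_n)\,\nu(dx)\to\int_{\R^l}H(x;\lambda,g)\,\nu(dx)$, and the infimum over $g\in\mathbb C_0^2$ yields \eqref{eq:94}. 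The main obstacle throughout is the Hessian term $\frac{1}{2}\,\text{tr}(\sigma\sigma^T\nabla^2 f)$ in $H(x;\lambda,f)$: for a general $\nu\in\mathcal{P}$ it cannot be removed by integration by parts, so the approximation must be carried out on the integrand itself; the argument closes only because the a priori bounds keep this term at most quadratic on the shell, which is exactly outpaced by the mass $o(n^{-2})$ that $\nu$ assigns there.
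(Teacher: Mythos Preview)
Your proof is correct and follows the same cutoff--interpolation strategy as the paper's. There are two minor execution differences worth noting. First, in the direction $\inf_{\mathcal{U}_\lambda}\le\inf_{\mathbb C_0^2}$ you glue $g\in\mathbb C_0^2$ to the Bellman solution $f^\lambda$ from Lemma~\ref{le:minmax}, exploiting that $H(x;\lambda,f^\lambda)\equiv F(\lambda)$ gives a two--sided Hessian bound; the paper instead glues to an arbitrary $\varphi\in\mathcal{U}_\lambda$ and needs only $\sup_x H(x;\lambda,\varphi)<\infty$, so that direction is independent of Lemma~\ref{le:minmax}. Second, on the transition shell you bound $H$ by direct computation and use $\nu(\{n\le|x|\le2n\})=o(n^{-2})$; the paper instead observes that replacing $\eta_R^2,(1-\eta_R)^2$ by $\eta_R,1-\eta_R$ in the $T_\lambda$--quadratic form only increases it (since $T_\lambda\ge0$), which yields the pointwise convexity bound $H(x;\lambda,\eta_R\psi+(1-\eta_R)\varphi)\le\eta_R H(x;\lambda,\psi)+(1-\eta_R)H(x;\lambda,\varphi)+\epsilon_R(x)$ with an explicit cross--term error $\epsilon_R$ whose integral vanishes by the same second--moment reasoning. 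Both routes amount to the same estimate; yours is a bit more elementary, the paper's slightly more structural and self--contained.
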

\begin{proof}
Let  $\eta$ be a cut--off function, i.e., a
$[0,1]$--valued smooth nonincreasing function on $\R_+$ such that
$\eta(y)=1$ when $y\in[0,1]$ and $\eta(y)=0$ when $y\ge 2$\,. Let us
assume, in addition, that the derivative $\eta'$ does
 not exceed $2$ in absolute value  and let $R>0$\,.
 Let $\eta_R(x)=\eta(\abs{x}/R)$\,.
Given  $\psi\in\mathbb C_0^2$ and
$\varphi \in\mathcal{U}_\lambda$\,, 
by \eqref{eq:80} and \eqref{eq:59},
\begin{multline}
  \label{eq:100}
      H(x;\lambda,\eta_R\psi+(1-\eta_R)\varphi)=
\frac{1}{2}\,
\nabla \psi(x)^TT_\lambda(x)\nabla \psi(x)\,\eta_R(x)^2
+S_\lambda(x)\nabla
\psi(x)\,\eta_R(x)\\
+\frac{1}{2}\, \text{tr}\,\bl({\sigma(x)}{\sigma(x)}^T\nabla^2
\psi(x)\br)
\eta_R(x)
+
\frac{1}{2}\,
\nabla \varphi(x)^TT_\lambda(x)\nabla \varphi(x)\,(1-\eta_R(x))^2
+S_\lambda(x)\nabla
\varphi(x)\,(1-\eta_R(x))\\
+\frac{1}{2}\, \text{tr}\,\bl({\sigma(x)}{\sigma(x)}^T
\nabla^2\varphi(x)\br)(1-\eta_R(x))
+\epsilon_R(x)
+R_\lambda(x)\,,
\end{multline}
where
\begin{multline}
  \label{eq:103}
  \epsilon_R(x)=
\frac{1}{2}\,
\nabla \eta_R(x)^TT_\lambda(x)\nabla \eta_R(x)\,(\psi(x)-\varphi(x))^2
+
\nabla \psi(x)^TT_\lambda(x)\nabla \eta_R(x)\,
(\psi(x)-\varphi(x))\eta_R(x)\\
+
\nabla \psi(x)^TT_\lambda(x)\nabla \varphi(x)\,
(1-\eta_R(x))\eta_R(x)
+
\nabla \varphi(x)^TT_\lambda(x)\nabla \eta_R(x)\,
(\psi(x)-\varphi(x))(1-\eta_R(x))\\
+S_\lambda(x)(\psi(x)-\varphi(x))\nabla\eta_R(x) 
+\frac{1}{2}\, \text{tr}\,\bl({\sigma(x)}{\sigma(x)}^T
\bl((\psi(x)-\varphi(x))\nabla^2\eta_R(x)\\
+(\nabla\psi(x)-\nabla\varphi(x))\nabla\eta_R(x)^T\br)\br)\,.
\end{multline}
Replacing on the righthand side of \eqref{eq:100}
 $\eta_R(x)^2$ and $(1-\eta_R(x))^2$ with $\eta_R(x)$ and
$1-\eta_R(x)$\,, respectively, obtains that
\begin{multline}
  \label{eq:105}
H(x;\lambda,\eta_R\psi+(1-\eta_R)\varphi)
\le\eta_R(x)H(x;\lambda,\psi)+(1-\eta_R(x))H(x;\lambda,\varphi)
+\epsilon_R(x)\,.
\end{multline}
Therefore, 
\begin{multline*}
  \int_{\R^l}      H(x;\lambda,\eta_R\psi+(1-\eta_R)\varphi)\,\nu(dx) \le
\int_{\R^l}\eta_R(x) H(x;\lambda,\psi)\,\nu(dx)
+\sup_{x\in\R^l}(H(x;\lambda,\varphi)\vee0)\nu(\R^l\setminus B_R)
\\+\int_{\R^l}\epsilon_R(x)\,\nu(dx)\,,
\end{multline*}
where $a\vee b=\max(a,b)$\,.
By dominated convergence, the first integral on the righthand side
converges to $\int_{\R^l}H(x;\lambda,\psi)\,\nu(dx)$\,, as
$R\to\infty$\,. 
Since $\abs{\nabla\eta_R(x)}\le4\chi_{\{\abs{x}\ge
  R\}}(x)/\abs{x}$\,,
 $\abs{\nabla\varphi(x)}$ is of, at most, linear growth,
by $\varphi$ being a member of 
$\mathbb{C}^1_\ell$\,, so that
$\varphi(x)$ grows, at most, quadratically,
and since
$\int_{\R^l}\abs{x}^2\,\nu(dx)<\infty$\,,
by \eqref{eq:103}, one has that
\begin{equation}
  \label{eq:87}
    \lim_{R\to\infty}\int_{\R^l}\epsilon_R(x)\,\nu(dx)=0\,.
\end{equation}
Since $\psi\eta_R+\varphi(1-\eta_R)\in
\mathcal{U}_\lambda$\,, agreeing with $\varphi$ if $\abs{x}>2R$\,, 
  \begin{equation*}
     \inf_{f\in\mathcal{U}_\lambda}
\int_{\R^l} H(x;\lambda,f)\,\nu(dx)
\le\inf_{f\in\mathbb C^2_0}
\int_{\R^l} H(x;\lambda,f)\,\nu(dx)\,.
\end{equation*}
Conversely, let $\varphi \in\mathcal{U}_\lambda$ and
$\psi_R(x)=\eta_R(x)\varphi(x)$\,. One can see that $\psi_R$ is a $\mathbb
C^2_0$--function. 
By \eqref{eq:12}, in analogy with \eqref{eq:105} and \eqref{eq:87},
\begin{equation*}
\int_{\R^l} H(x;\lambda,\psi_R)\,\nu(dx)
\le
  \int_{\R^l}\bl(\eta_R(x)H(x;\lambda,\varphi)+
(1-\eta_R(x))H(x;\lambda,\mathbf 0)\br)\,\nu(dx)+
\hat\epsilon_R\,,
\end{equation*}
where 
$  \lim_{R\to\infty}\hat\epsilon_R=0\,,$
with $\mathbf 0$ representing the function that is equal to zero
identically.
By Fatou's lemma, $H(x;\lambda,\varphi)$ being bounded from above,
\begin{equation}
  \label{eq:108}
    \limsup_{R\to\infty}\int_{\R^l}\eta_R(x)H(x;\lambda,\varphi)\,\nu(dx)\le
\int_{\R^l}H(x;\lambda,\varphi)\,\nu(dx)\,.
\end{equation}
By dominated convergence,
\begin{equation*}
\lim_{R\to\infty}
  \int_{\R^l}
(1-\eta_R(x))H(x;\lambda,\mathbf 0)\,\nu(dx)=0\,.
\end{equation*} 
Hence,
\begin{equation*}
\inf_{f\in\mathbb C^2_0}
\int_{\R^l} H(x;\lambda,f)\,\nu(dx)\le
  \inf_{f\in\mathcal{U}_\lambda}
\int_{\R^l} H(x;\lambda,f)\,\nu(dx)\,,
\end{equation*}
which concludes the proof of \eqref{eq:94}.

\end{proof}
\begin{remark}
\label{re:inf}  Similarly,  it can be shown that, if $\lambda<1$\,, then
\begin{equation*}
      \inf_{f\in\mathbb C_b^2}
\int_{\R^l} H(x;\lambda,f)\,\nu(dx)=
\inf_{f\in\mathbb C^2_0}
\int_{\R^l} H(x;\lambda,f)\,\nu(dx)\,.
\end{equation*}
(The analogue of \eqref{eq:108} holds with equality by bounded convergence.)
\end{remark}
The following lemma appears in Puhalskii and Stutzer \cite{PuhStu16}.
 \begin{lemma}
   \label{le:density_differ}
If, given $\lambda<1$\,, probability measure $\nu$ on $\R^l$ is such that
the integrals $\int_{\R^l} H(x;\lambda,f)\,\nu(dx)$ are bounded below
uniformly over 
$f\in\mathbb C_0^2$\,,
then $\nu$ admits density  which belongs to
$\hat{\mathbb{P}}$\,.
 \end{lemma}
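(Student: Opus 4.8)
The plan is to turn the one--sided bound into a Fokker--Planck identity for $\nu$ with a square--integrable drift and then to quote the regularity theory for infinitesimally invariant measures of nondegenerate diffusions. Throughout I read the hypothesis together with $\nu\in\mathcal P$; this costs nothing in the applications, where $\nu$ lies in a $\sup$--compact set of Lemma~\ref{le:sup-comp} and hence has finite second moment, and it guarantees in particular that $\int_{\R^l}R_\lambda(x)\,\nu(dx)$ (which equals $\int_{\R^l}H(x;\lambda,0)\,\nu(dx)$) is finite, since $R_\lambda$ has at most quadratic growth.

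First I would derive a ``gradient estimate'' for $\nu$. Fix $f\in\mathbb C_0^2$ and feed $tf$, $t\in\R$, into the hypothesis. By \eqref{eq:80} and \eqref{eq:59}, $\int_{\R^l}H(x;\lambda,tf)\,\nu(dx)$ is a quadratic polynomial in $t$, with leading coefficient $\tfrac12\int_{\R^l}\nabla f(x)^TT_\lambda(x)\nabla f(x)\,\nu(dx)\ge0$ (nonnegative by condition (N)), with coefficient of $t$ equal to $\int_{\R^l}\bl(S_\lambda(x)\nabla f(x)+\tfrac12\,\text{tr}(\sigma(x)\sigma(x)^T\nabla^2 f(x))\br)\,\nu(dx)$, and with constant term $\int_{\R^l}R_\lambda(x)\,\nu(dx)$; the first two integrals are finite because $\nabla f$ and $\nabla^2 f$ are bounded with compact support and the coefficients are continuous. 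Since this polynomial is bounded below by $-c$ for every $t$, completing the square, together with the uniform boundedness of $T_\lambda$, yields a constant $K$ independent of $f$ such that
\begin{equation*}
\Bl|\int_{\R^l}\Bl(\tfrac12\,\text{tr}\bl(\sigma(x)\sigma(x)^T\nabla^2 f(x)\br)+S_\lambda(x)\nabla f(x)\Br)\,\nu(dx)\Br|\le K\Bl(\int_{\R^l}\abs{\nabla f(x)}^2\,\nu(dx)\Br)^{1/2}
\end{equation*}
for all $f\in\mathbb C_0^2$.

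Next I would upgrade this to a Fokker--Planck identity. The displayed estimate shows that the linear functional $\ell(f):=\int_{\R^l}\bl(\tfrac12\,\text{tr}(\sigma(x)\sigma(x)^T\nabla^2 f(x))+S_\lambda(x)\nabla f(x)\br)\,\nu(dx)$ vanishes whenever $\nabla f=0$ $\nu$--almost everywhere, hence factors through $\nabla f$ and extends to a bounded linear functional of norm at most $K$ on $\mathbb L^{1,2}_0(\R^l,\R^l,\nu(dx))$; by the Riesz representation theorem, $\ell(f)=\int_{\R^l}\nabla f(x)^T\Phi(x)\,\nu(dx)$ for some $\Phi\in\mathbb L^{1,2}_0(\R^l,\R^l,\nu(dx))$. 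Subtracting the $S_\lambda$ term --- which is square integrable with respect to $\nu$ because $\nu\in\mathcal P$ and $S_\lambda$ has linear growth --- gives
\begin{equation*}
\int_{\R^l}\tfrac12\,\text{tr}\bl(\sigma(x)\sigma(x)^T\nabla^2 f(x)\br)\,\nu(dx)=\int_{\R^l}\nabla f(x)^Tg(x)\,\nu(dx)\qquad(f\in\mathbb C_0^2),
\end{equation*}
with $g:=\Phi-S_\lambda^T$ square integrable with respect to $\nu$. Thus $\nu$ is an infinitesimally invariant measure for the uniformly elliptic operator $f\mapsto\tfrac12\,\text{tr}(\sigma\sigma^T\nabla^2 f)-g^T\nabla f$, whose diffusion matrix $\sigma\sigma^T$ is bounded, uniformly positive definite and twice continuously differentiable and whose drift $-g$ is square integrable against $\nu$. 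By the regularity theory for such measures (Bogachev, Krylov, and R\"ockner \cite{BogKryRoc}), $\nu$ admits a density $m$ with $m\in\mathbb W^{1,1}_{\text{loc}}(\R^l)$; and the \emph{global} square integrability of the drift forces the Fisher information $\int_{\R^l}\abs{\nabla m(x)}^2/m(x)\,dx$ to be finite, i.e.\ $\sqrt m\in\mathbb W^{1,2}(\R^l)$. Since also $\int_{\R^l}\abs{x}^2\,m(x)\,dx<\infty$, this gives $m\in\hat{\mathbb P}$.

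The step I expect to be the main obstacle is the passage from ``$\nu$ has a density'' to the global bound $\sqrt m\in\mathbb W^{1,2}(\R^l)$: obtaining the density and its local Sobolev regularity is classical elliptic regularity applied to the above identity, but finiteness of the Fisher integral over all of $\R^l$ genuinely requires the drift to be globally square integrable against $\nu$ --- whence the second--moment control --- and a careful argument, since one cannot test the identity directly against $\nabla\log m$ (which would be circular) but must use truncated and regularized substitutes, e.g.\ $\chi_R(x)\log(m(x)+\delta)$, and then let $\delta\downarrow0$ and $R\uparrow\infty$ (or else quote the sharp statement of \cite{BogKryRoc}). A secondary technical point, already flagged above, is ensuring that $\int_{\R^l}R_\lambda(x)\,\nu(dx)$ is finite, which is why the hypothesis is to be combined with $\nu\in\mathcal P$.
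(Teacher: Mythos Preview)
Your proposal is correct and follows essentially the same line as the paper's proof: scale $f$ to extract from the quadratic structure an $L^2(\nu)$ bound on the second-order functional, apply Riesz representation, and then invoke the Bogachev--Krylov--R\"ockner regularity theory to produce the density and identify the Riesz element with (essentially) $-\nabla\log m$, giving finite Fisher information. The paper differs only cosmetically---it bounds $\int\text{tr}(\sigma\sigma^T\nabla^2 f)\,\nu(dx)$ directly (handling the $S_\lambda$ term by Cauchy--Schwarz rather than by subtraction after Riesz) and cites the specific local results (Theorem~2.1 in \cite{BogKryRoc01} and Agmon \cite{Agm59}) before identifying the representer; your observation that $\nu\in\mathcal P$ is implicitly needed for finiteness of $\int R_\lambda\,d\nu$ and $\int|S_\lambda|^2\,d\nu$ is correct and is indeed how the lemma is used.
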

 \begin{proof}
   The reasoning follows that of Puhalskii \cite{Puh16}, cf. Lemma
   6.1, Lemma 6.4, and Theorem 6.1 there.
If there exists $\kappa\in\R$ such that
$\int_{\R^l} H(x;\lambda,f)\,\nu(dx)\ge\kappa$\, 
for all $f\in\mathbb C_0^2$\,,
then
by \eqref{eq:59}, for arbitrary $\delta>0$\,,
\begin{equation*}
  \delta\int_{\R^l}\frac{1}{2}\, \text{tr}\,
\bl({\sigma(x)}{\sigma(x)}^T\nabla^2 f(x)\br)\,
\nu(dx)\ge \kappa-\int_{\R^l}\breve H(x;\lambda,\delta\nabla f(x))
\,\nu(dx)\,.
\end{equation*}
On letting 
\begin{equation*}
  \delta=\kappa^{1/2}
\Bl(\int_{\R^l}\nabla f(x)^T
T_\lambda(x)\nabla f(x)\,\nu(dx)\Br)^{-1/2}\,,
\end{equation*}
we obtain with the aid of \eqref{eq:80} and the Cauchy--Schwarz
inequality  that there exists constant $K_1>0$
such that, for all $f\in\mathbb C_0^2$\,,
\begin{equation*}
    \int_{\R^l} \text{tr}\,
\bl({\sigma(x)}{\sigma(x)}^T\nabla^2 f(x)\br)\,
\,\nu(dx)\le K_1\Bl(\int_{\R^l}\abs{\nabla f(x)}^2\,\nu(dx)\Br)^{1/2}\,.
\end{equation*}
It follows that the lefthand side extends to a linear functional on
$\mathbb L^{1,2}_0(\R^l,\R^l,\nu(dx))$\,, hence, by the Riesz
representation theorem,
there exists $\nabla h\in\mathbb L^{1,2}_0(\R^l,\R^l,\nu(dx))$ such that
\begin{equation}  \label{eq:46}
  \int_{\R^l} \text{tr}\,
\bl({\sigma(x)}{\sigma(x)}^T\nabla^2 f(x)\br)\,
\,\nu(dx)=\int_{\R^l}\nabla h(x)^T\nabla f(x)\,\nu(dx)
\end{equation}
and 
$  \int_{\R^l}\abs{\nabla h(x)}^2\nu(dx)\le K_1\,.
$
Theorem 2.1 in Bogachev, Krylov, and R\"ockner \cite{BogKryRoc01}
 implies that the
measure $\nu(dx)$ has  density $m(x)$ with respect to  Lebesgue measure
which belongs to  $L_{\text{loc}}^\xi(\R^l)$ 
for all  $\xi\in(1,l/(l-1))$\,. 
It follows  that, for 
arbitrary open ball $S$ in $\R^l$\,,
  there exists  $K_2>0$ such that
for all  
$ f\in \mathbb{C}_0^2$ with support in  $S$\,,
\begin{equation*}
  \abs{\int_{S} \text{tr}\,\bl(\sigma(x)\sigma(x)^T\nabla^2f(x)
\br)\,m(x)\,dx}\le K_2\bl(\int_S \abs{\nabla f(x)}^{2\xi/(\xi-1)}
\,dx\br)^{(\xi-1)/(2\xi)}\,.
\end{equation*}

\noindent 
By Theorem 6.1 in Agmon \cite{Agm59}, 
the  density $m$  belongs to
$\mathbb{W}_{\text{loc}}^{1,\zeta}(S)$ for all $\zeta\in(1,2l/(2l-1))$.
 Furthermore, $\nabla h(x)=-\nabla m(x)/m(x)$ so that
$\sqrt{m}\in \mathbb W^{1,2}(\R^l)$\,.
 \end{proof}
\begin{remark}
  Essentially, \eqref{eq:46} signifies that one can integrate by parts on
  the lefthand side, so $m(x)$ needs to be differentiable.
\end{remark}

\begin{lemma}
  \label{le:conc}
  \begin{enumerate}
  \item 
The function $\breve H(x,\lambda,p)$ is strictly convex in
$(\lambda,p)$ on
$(-\infty,1)\times\R^l$  and is  convex on $\R\times \R^l$\,.
The function $H(x;\lambda,f)$ is convex in $(\lambda,f)$ on 
$\R\times \mathbb C^2$\,. For $m\in\mathbb P$\,,  the function
$G(\lambda,f,m)$ is
  convex  in $(\lambda,f)$ on $\R\times 
\mathbb C_b^2$\,.
 \item
 Let $m\in
\hat{\mathbb P}$\,.
Then the function $\breve G(\lambda,\nabla f,m)$ is
 convex and lower semicontinuous  in $(\lambda,\nabla f)$ on $\R\times {\mathbb
  L}^{1,2}_0(\R^l,\R^l,m(x)\,dx)$ and is strictly convex on 
$(-\infty,1)\times {\mathbb
  L}^{1,2}_0(\R^l,\R^l,m(x)\,dx)$\,.
If  $\lambda<1$\,, then the infimum in
   \eqref{eq:13} is attained at unique $\nabla f$\,. If $\lambda=1$
   and the infimum in \eqref{eq:13}
 is finite, then it is  attained at unique
   $\nabla f$ too. 
The function $
\breve F(\lambda,m)$ is
  convex and lower semicontinuous with respect to $\lambda$\,,
it is strictly
convex  on $(-\infty,1)$\,,
 and tends to $\infty$
superlinearly, as
$\lambda\to-\infty$\,.
If $\lambda<1$\,, then
\begin{equation}
  \label{eq:15}
   \breve F(\lambda,m)=\inf_{f\in\mathbb C^2\cap \mathbb C^1_\ell}
\breve G(\lambda,\nabla f,m)=
\inf_{f\in\mathbb
  C_0^2}G(\lambda, f,m)\,.
\end{equation}
If $\lambda<1$ and $\mathcal{U}_\lambda\not=\emptyset$\,, then
\begin{equation}
  \label{eq:52}
\breve F(\lambda,m)  =
\inf_{f\in\mathcal{U}_\lambda}\breve G(\lambda,\nabla f,m)=
\inf_{f\in\mathcal{U}_\lambda} G(\lambda, f,m)\,.
\end{equation}

If $f\in \mathbb L^{1,2}_0(\R^l,\R^l,m(x)\,dx)$\,, then
$  \breve G(\lambda,\nabla f,m)$  is differentiable in 
$\lambda\in(-\infty,1)$ and
  \begin{multline}
    \label{eq:10a}
       \frac{d}{d\lambda}\,   \breve   G(\lambda,
  \nabla f,m)=
 \int_{\R^l}\bl( M( u^{\lambda,\nabla f}(x),x)
+\lambda
\abs{N(u^{\lambda,\nabla f}(x),x)}^2\\+ \nabla f(x)^T\sigma(x)
N(u^{\lambda,\nabla f}(x),x)\br)
 m(x)\,dx\,,
  \end{multline}
where $ u^{\lambda,\nabla f}(x)$ is defined by \eqref{eq:39} with $\nabla f(x)$
as $p$\,.
Furthermore, $\breve F(\lambda,m)$  is differentiable with respect to
$\lambda$ and 
\begin{equation}
  \label{eq:222}
    \frac{d}{d\lambda}\,   \breve   F(\lambda,m)=
\frac{d}{d\lambda}\,   \breve   G(\lambda,
  \nabla f^{\lambda,m},m)\,,
\end{equation}
with $\nabla f^{\lambda,m}$ attaining the infimum 
on the righthand side of \eqref{eq:13}.
In addition, if $\breve F(1,m)<\infty$\,, then the lefthand
derivatives at 1 equal each other as well:
\begin{equation}
  \label{eq:163}
   \frac{d}{d\lambda}\,\breve F(\lambda, m)\big|_{1-}=
\frac{d}{d\lambda}\,
\breve G(\lambda,\nabla  f^{1,m}, m)\big|_{1-}\,.
\end{equation}
\item
The function $F(\lambda)$ is convex, is continuous for
$\lambda<\overline\lambda$\,,
and   $F(\lambda)\to\infty$
superlinearly, as
$\lambda\to-\infty$\,.
The functions $J_q$\,, $J_q^{\text{o}}$\,, and $J_q^{\text{s}}$ are
continuous.
  \end{enumerate}
\end{lemma}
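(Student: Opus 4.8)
\emph{Plan of proof.} The whole lemma is the propagation of two facts about the single function $\breve H(x;\cdot,\cdot)$---convexity on $\R\times\R^l$ and strict convexity on $(-\infty,1)\times\R^l$---through the chain of integrations and partial minimisations defining $H$, $G$, $\breve G$, $\breve F$, $F$, so the first step is to settle $\breve H$. Subtracting $\tfrac12\norm{\cdot}^2_{c(x)^{-1}}$ inside \eqref{eq:80} by means of $\tfrac{\lambda}{2(1-\lambda)}=\tfrac1{2(1-\lambda)}-\tfrac12$ and completing the square, using $b(x)^Tc(x)^{-1}b(x)=I_k-Q_1(x)$ and $Q_1(x)^2=Q_1(x)=Q_1(x)^T$, one finds that for $\lambda<1$
\begin{multline*}
\breve H(x;\lambda,p)=\frac{\norm{a(x)-r(x)\ind-\lambda b(x)\beta(x)+b(x)\sigma(x)^Tp}^2_{c(x)^{-1}}}{2(1-\lambda)}\\
+\tfrac12\abs{\lambda Q_1(x)\beta(x)-Q_1(x)\sigma(x)^Tp}^2+\ell(x;\lambda,p),
\end{multline*}
with $\ell$ affine in $(\lambda,p)$. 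The first summand is the perspective map $(y,t)\mapsto\abs{y}^2/(2t)$ precomposed with an affine map into $\R^n\times(0,\infty)$, hence jointly convex on $\{\lambda<1\}\times\R^l$; the second is $\tfrac12\abs{\cdot}^2$ precomposed with an affine map; so $\breve H(x;\cdot,\cdot)$ is convex there, and one also reads off the uniform coercive bound $\breve H(x;\lambda,p)\ge\tfrac\mu8\abs{p}^2-C(1+\lambda^2)(1+\abs{x}^2)$ (here $\mu>0$ is the ellipticity constant of $\sigma Q_1\sigma^T$ from part~1 of (N)). For strictness: if $\breve H(x;\cdot,\cdot)$ were affine along a segment of direction $(\delta,q)\ne0$, each summand would be affine along it, forcing $\delta Q_1(x)\beta(x)=Q_1(x)\sigma(x)^Tq$; if $\delta\ne0$ this would put $\beta(x)$ into the sum of the ranges of $b(x)^T$ and $\sigma(x)^T$, contradicting part~2 of (N), and if $\delta=0$ it would put $\sigma(x)^Tq$ into the range of $b(x)^T$, forcing $q=0$ by part~1 of (N) and Lemma \ref{le:angle}; so $\breve H(x;\cdot,\cdot)$ is strictly convex on $(-\infty,1)\times\R^l$. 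For $\lambda\ge0$ one has in addition $\breve H(x;\lambda,p)=\sup_{u\in\R^n}\bl(\lambda M(u,x)+p^T\theta(x)+\tfrac12\abs{\lambda N(u,x)+\sigma(x)^Tp}^2\br)$ from \eqref{eq:40}, a supremum of functions jointly convex in $(\lambda,p)$; with the previous part and the already noted lower semicontinuity of $\breve H$ this yields convexity on all of $\R\times\R^l$. Convexity of $H$ in $(\lambda,f)$ is then immediate (the map $f\mapsto(\nabla f(x),\nabla^2 f(x))$ is linear, so $H(x;\cdot,f)$ is an affine substitution into $\breve H(x;\cdot,\cdot)$ plus a term linear in $f$); convexity of $G$ follows by integrating against the nonnegative $m$; convexity of $F$ on $(-\infty,1)$ is partial minimisation of the jointly convex $f\mapsto\sup_x H(x;\cdot,f)$, and $F$ extends convexly to $\R$ by its definition at $1$; and since $F$ is finite on $(-\infty,\lambda_0]$ (Lemma \ref{le:sup-comp}) and convex, it is finite, hence continuous, on the open interval $(-\infty,\overline\lambda)$.

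For $\breve G$ and $\breve F$ I would write $\breve G(\lambda,\nabla f,m)=\int\breve H(x;\lambda,\nabla f(x))\,m(x)\,dx-\tfrac12\int\nabla f(x)^T\delta_m(x)\,m(x)\,dx$, where $\delta_m=\text{div}(\sigma\sigma^Tm)/m$ belongs to $\mathbb L^2(\R^l,\R^l,m(x)\,dx)$ because $\sqrt m\in\mathbb W^{1,2}(\R^l)$; the second term is a bounded linear functional of $\nabla f$ not involving $\lambda$, and the first is convex in $(\lambda,\nabla f)$ and, by the uniform bound above and Fatou's lemma along a.e.\ convergent subsequences, lower semicontinuous. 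Strict convexity of $\breve G$ on $(-\infty,1)\times\mathbb L^{1,2}_0$ descends from that of $\breve H$: equality at a midpoint forces the integrand to be so $m$--a.e., hence the two arguments $(\lambda,\nabla f)$ to coincide $m$--a.e. The uniform bound also makes $\breve G(\lambda,\cdot,m)$ coercive on the Hilbert space $\mathbb L^{1,2}_0$, and uniformly so as $\lambda\uparrow1$ because $T_\lambda\succeq\sigma\sigma^T$ for $\lambda\le1$; hence the infimum in \eqref{eq:13} is attained---uniquely, by strict convexity---for every $\lambda<1$, and also at $\lambda=1$ when finite, and the same coercivity plus weak lower semicontinuity gives lower semicontinuity of $\breve F(\cdot,m)$ up to $\lambda=1$; strict convexity of $\breve F(\cdot,m)$ on $(-\infty,1)$ then follows by comparing $\breve F$ at a midpoint with $\breve G$ evaluated at the midpoint of the two minimisers. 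Identities \eqref{eq:15} and \eqref{eq:52} come from: gradients of $\mathbb C^1_\ell$--functions lie in $\mathbb L^{1,2}_0$ (cut off and use $\int\abs{x}^2m<\infty$); gradients of $\mathbb C^1_0$--functions are dense there while $\breve G(\lambda,\cdot,m)$ is finite, convex, hence continuous; $\breve G(\lambda,\nabla f,m)=G(\lambda,f,m)$ for $f\in\mathbb C^2_b$ by \eqref{eq:12}; and Lemma \ref{le:approx} (with $\nu=m\,dx$) for the passage between $\mathcal U_\lambda$ and $\mathbb C^2_0$.

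For differentiability, note that for $\lambda<1$ the supremum in \eqref{eq:40} is attained at the unique point $u^{\lambda,\nabla f}(x)$ of \eqref{eq:39}, smooth in $\lambda$, so the envelope (Danskin) theorem gives $\partial_\lambda\breve H(x;\lambda,\nabla f(x))=M(u^{\lambda,\nabla f}(x),x)+\lambda\abs{N(u^{\lambda,\nabla f}(x),x)}^2+\nabla f(x)^T\sigma(x)N(u^{\lambda,\nabla f}(x),x)$, and differentiating under the integral---legitimate since this quantity is $O((1+\abs{x})^2)$ locally uniformly in $\lambda$ and $m\in\mathbb P$---gives \eqref{eq:10a}. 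Being the value function of a convex minimisation with unique minimiser $\nabla f^{\lambda,m}$ and $\lambda$--differentiable objective, $\breve F(\cdot,m)$ is differentiable with $\tfrac{d}{d\lambda}\breve F(\lambda,m)=\partial_\lambda\breve G(\lambda,\nabla f^{\lambda,m},m)$, which is \eqref{eq:222}; and \eqref{eq:163} follows by sandwiching the left difference quotient of $\breve F(\cdot,m)$ at $1$ between those of $\breve G(\cdot,\nabla f^{1,m},m)$ and $\breve G(\cdot,\nabla f^{\lambda,m},m)$ and letting $\lambda\uparrow1$, using $\nabla f^{\lambda,m}\to\nabla f^{1,m}$ (bounded minimising sequence, unique limit) and \eqref{eq:10a}.

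The superlinear growth of $\breve F(\cdot,m)$ as $\lambda\to-\infty$ is the delicate point. Relaxing the constraint from $\nabla f\in\mathbb L^{1,2}_0$ to $\nabla f\in\mathbb L^2(\R^l,\R^l,m(x)\,dx)$ and minimising the resulting quadratic functional explicitly gives $\breve F(\lambda,m)\ge\int\bl(R_\lambda(x)-\tfrac12\widetilde S_\lambda(x)T_\lambda(x)^{-1}\widetilde S_\lambda(x)^T\br)m(x)\,dx$ with $\widetilde S_\lambda=S_\lambda-\tfrac12\delta_m^T$. As $\lambda\to-\infty$, $T_\lambda(x)\to\sigma(x)Q_1(x)\sigma(x)^T$ (uniformly positive definite by (N)), $S_\lambda(x)=-\lambda\,\beta(x)^TQ_1(x)\sigma(x)^T+O(1+\abs{x})$ and $R_\lambda(x)=\tfrac12\lambda^2\,\beta(x)^TQ_1(x)\beta(x)+O(\abs{\lambda}(1+\abs{x}^2))$, and a short computation using $Q_2=Q_1-Q_1\sigma^T(\sigma Q_1\sigma^T)^{-1}\sigma Q_1$ (cf.\ \eqref{eq:73}) collapses the $\lambda^2$--term of the bound to $\tfrac12\lambda^2\,\beta(x)^TQ_2(x)\beta(x)$; since $\int(1+\abs{x}^2)m<\infty$, this yields $\breve F(\lambda,m)\ge\tfrac12\lambda^2\int\beta(x)^TQ_2(x)\beta(x)\,m(x)\,dx+O(\abs{\lambda})$, superlinear in $\abs{\lambda}$ by part~2 of (N). Superlinearity passes to $F$ because, when $F(\lambda)<\infty$, the Bellman solution $f^\lambda\in\mathcal U_\lambda$ of Lemma \ref{le:minmax} satisfies $F(\lambda)=\int H(x;\lambda,f^\lambda)m_0(x)\,dx=G(\lambda,f^\lambda,m_0)\ge\breve F(\lambda,m_0)$ by \eqref{eq:52}, for an arbitrary fixed $m_0\in\hat{\mathbb P}$. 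This estimate, together with the book-keeping needed for the coercivity and semicontinuity claims up to $\lambda=1$, is the main obstacle; everything else is routine convex analysis together with dominated and monotone convergence. Finally, each of $J_q$, $J^{\text{o}}_q$, $J^{\text{s}}_q$ is a supremum of affine functions of $q$, hence convex in $q$; it is $\ge0$ (take $\lambda=0$ and use $F(0)=0$) and, by the superlinearity of $F$ as $\lambda\to-\infty$ and the lower semicontinuity of $F$ (Lemma \ref{le:minmax}), finite for every $q\in\R$, so a finite convex function on $\R$, therefore continuous.
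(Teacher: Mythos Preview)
Your proof is correct and covers all the required claims, but your route to the strict convexity of $\breve H(x;\cdot,\cdot)$ is genuinely different from the paper's. The paper computes the full $(1+l)\times(1+l)$ Hessian of $\breve H$ with respect to $(\lambda,p)$ from the explicit expression \eqref{eq:64}, and then shows it is positive definite via a Cauchy--Schwarz argument in a suitably weighted inner product on $\R^n\times\R^k$, the strictness coming from the fact that $Q_1(x)\beta(x)$ is never a scalar multiple of $Q_1(x)\sigma(x)^Ty$ by part~2 of~(N). You instead rewrite $\breve H$ as a perspective function plus $\tfrac12\abs{\lambda Q_1\beta-Q_1\sigma^Tp}^2$ plus an affine remainder, and observe that the quadratic term alone is already strictly convex in $(\lambda,p)$: its direction of degeneracy would force either $\sigma^Tq$ into the range of $b^T$ (impossible by part~1 of~(N)) or $\beta$ into $\mathrm{range}\,b^T+\mathrm{range}\,\sigma^T$ (impossible by part~2 of~(N)). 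This is cleaner, avoids the Hessian computation, and makes the role of~(N) more transparent; it also yields the coercive lower bound you use downstream essentially for free. The paper's Hessian approach, on the other hand, gives the exact second--order form, which could be useful elsewhere.

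The remaining parts of your argument track the paper closely in substance though not in citation: your envelope/Danskin reasoning for \eqref{eq:10a}--\eqref{eq:222} is what the paper obtains by invoking Theorem~4.13 in Bonnans--Shapiro, and your sandwich for \eqref{eq:163} is the content of the paper's remark that ``the proof on pp.~274--275 goes through'' because $\breve G(\lambda,\nabla f,m)\to\infty$ uniformly in $\lambda$ near~$1$ as $\norm{\nabla f}\to\infty$. One small caution on that sandwich: the upper half involves $\breve G(1,\nabla f^{\lambda,m},m)$, which need not be finite a~priori, so it is safer to argue, as you also indicate, via $\tfrac{d}{d\lambda}\breve F(\lambda,m)=\tfrac{d}{d\lambda}\breve G(\lambda,\nabla f^{\lambda,m},m)$ together with $\nabla f^{\lambda,m}\to\nabla f^{1,m}$ and the explicit formula \eqref{eq:10a}, rather than literal sandwiching. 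Finally, your proof of continuity of $J_q$, $J_q^{\text{o}}$, $J_q^{\text{s}}$ via ``finite convex on $\R$ implies continuous'' is a slightly different and shorter route than the paper's, which instead observes that the supremum in each definition may be restricted to a common compact $\lambda$--interval for $q$ in any bounded set.
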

\begin{proof}
If $\lambda<1$\,, then, by \eqref{eq:40} and \eqref{eq:64},
the Hessian matrix of $\breve H(x;\lambda,p)$
with respect to $(\lambda,p)$ is given by
\begin{align*}
\breve H_{pp}(x;\lambda,p)&=\frac{1}{1-\lambda}\,
\sigma(x) b(x)^Tc(x)^{-1}b(x)\sigma(x)^T
+\sigma(x)Q_1(x)\sigma(x)^T\,,\\
  \breve H_{\lambda\lambda}(x;\lambda,p)&=\frac{1}{(1-\lambda)^3}\,
\norm{a(x)-r(x)\mathbf1+b(x)\sigma(x)^Tp-b(x)\beta(x)}^2_{c(x)^{-1}}
+\beta(x)^TQ_1(x)\beta(x)\,,\\
\breve H_{\lambda p}(x;\lambda,p)&=-\frac{1}{(1-\lambda)^2}\,
\bl(a(x)-r(x)\mathbf1+b(x)\sigma(x)^Tp-b(x)\beta(x)\br)^Tc(x)^{-1}b(x)\sigma(x)^T\\&+\beta(x)^TQ_1(x)\sigma(x)^T\,.
\end{align*}
We show that it is positive definite. More specifically, we prove that 
for all $\tau\in\R$ and
$y\in\R^l$ such that $\tau^2+\abs{y}^2\not=0$\,,
\begin{equation*}
  \tau^2\breve H_{\lambda\lambda}(x;\lambda,p)
+y^T\breve H_{pp}(x;\lambda,p) y+
2\tau\breve H_{\lambda p}(x;\lambda,p)y>0\,.
\end{equation*}
Since $\breve H_{pp}(x;\lambda,p)$ is a   positive definite matrix by
condition (N),
the latter inequality  holds when $\tau=0$\,. Assuming $\tau\not=0$\,, we
need to show that
\begin{equation}
  \label{eq:104}
  \breve H_{\lambda\lambda}(x;\lambda,p)
+y^T\breve H_{pp}(x;\lambda,p) y+
2\breve H_{\lambda p}(x;\lambda,p)y>0\,.
\end{equation}
Let, for   $d_1=(v_1(x),w_1(x))$ 
and $d_2=(v_2(x),w_2(x))$\,,
where $v_1(x)\in\R^n\,,w_1(x)\in\R^k\,,v_2(x)\in\R^n\,,
w_2(x)\in\R^k$\,, and $x\in\R^l$\,,  the
inner product be defined by
$d_1\cdot d_2=v_1(x)^Tc(x)^{-1}v_2(x)+
w_1(x)^Tw_2(x)$\,.
By the Cauchy--Schwarz inequality, applied to
$d_1=\bl((1-\lambda)^{-3/2}(a(x)-r(x)\mathbf1+b(x)\sigma(x)^Tp-b(x)\beta(x)),
Q_1(x)\beta(x)\br)$ and
$d_2=((1-\lambda)^{-1/2}b(x)\sigma(x)^Ty,Q_1(x)\sigma(x)^Ty)$\,,
we have that $
  (\breve H_{\lambda p}(x;\lambda,p)y)^2
<y^T\breve H_{pp}(x;\lambda,p) y
\breve H_{\lambda\lambda}(x;\lambda,p)\,,
$ with the inequality being strict because, by part 2 of condition (N), 
 $Q_1(x)\beta(x)$ is not a scalar multiple of 
$Q_1(x)\sigma(x)^Ty$\,.
Thus, \eqref{eq:104} holds, so 
the function $\breve H(x;\lambda,p)$ is strictly convex
in $(\lambda,p)$ on 
$(-\infty,1)\times \R^l$\,, for all $x\in\R^l$\,.

Since by \eqref{eq:40} and \eqref{eq:64}, $\breve H(x;\lambda_n,p_n)\to 
\breve H(x;1,p)\le\infty$ as $\lambda_n\uparrow 1$
and $p_n\to p$\,, and $\breve H(x;\lambda,p)=\infty$ if 
$\lambda>1$\,, the function $\breve H(x;\lambda,p)$ is convex
in $(\lambda,p)$ on 
$\R\times \R^l$\,.
By  \eqref{eq:59},
the function $H(x;\lambda,f)$ 
is  convex in $(\lambda,f)$ on $\R\times \mathbb C^2$\,.
By \eqref{eq:62}, for any $m\in\mathbb P$\,,
 $G(\lambda,f,m)$ is convex in $(\lambda,f)$ on $\R\times \mathbb
C^2_b$\,.

Let  $m\in\hat{\mathbb P}$\,. 
By \eqref{eq:11} and the 
strict convexity of $\breve H$\,, $\breve G(\lambda,\nabla f,m)$ is strictly convex in
$(\lambda,\nabla f)\in
(-\infty,1)\times \mathbb L^{1,2}_0(\R^l,\R^l,m(x)\,dx)$\,.
Let us note that, by \eqref{eq:64}, for $\epsilon>0$\,,
\begin{multline}
  \label{eq:81}
\breve H(x;\lambda,p)\ge
-\frac{1}{2}\,
\norm{a(x)-r(x)\mathbf1-\lambda
  b(x)\beta(x)+b(x)\sigma(x)^Tp}^2_{c(x)^{-1}}
+\frac{1}{2}\,\lambda^2\abs{\beta(x)}^2\\+
\lambda(
r(x)-\alpha(x)+\frac{1}{2}\,\abs{\beta(x)}^2-\beta(x)^T\sigma(x)^Tp)
+
p^T\theta(x)+\frac{1}{2}\,\abs{{\sigma(x)}^Tp}^2\\\ge
-\frac{1}{2}\,\Bl((1+\epsilon)\norm{b(x)\sigma(x)^Tp}^2_{c(x)^{-1}}+
\bl(1+\frac{1}{\epsilon}\br)\norm{a(x)-r(x)\mathbf1-\lambda
  b(x)\beta(x)}^2_{c(x)^{-1}}\Br)
\\+\frac{1}{2}\,\lambda^2\abs{\beta(x)}^2+
\lambda(r(x)-\alpha(x)+\frac{1}{2}\,\abs{\beta(x)}^2)+
p^T(\theta(x)-\lambda\sigma(x)\beta(x))
+\frac{1}{2}\,\abs{{\sigma(x)}^Tp}^2
\\=\frac{1}{2}\,\norm{p}^2_{Q_{1,\epsilon}(x)}+
\frac{1}{2}\,\bl(1+\frac{1}{\epsilon}\br)\norm{a(x)-r(x)\mathbf1-\lambda
  b(x)\beta(x)}^2_{c(x)^{-1}}
\\+\frac{1}{2}\,\lambda^2\abs{\beta(x)}^2+
\lambda(r(x)-\alpha(x)+\frac{1}{2}\,\abs{\beta(x)}^2)
+p^T(\theta(x)-\lambda\sigma(x)\beta(x))\,,
\end{multline}
where $Q_{1,\epsilon}(x)=Q_1(x)-\epsilon
\sigma(x)b(x)^Tc(x)^{-1}b(x)\sigma(x)^T$\,.
Since $Q_1(x)$ is uniformly positive definite, so is
$Q_{1,\epsilon}(x)$\,, provided $\epsilon$ is small enough.
By \eqref{eq:81}, \eqref{eq:11},
and  by the facts that $\int_{\R^l}\abs{x}^2m(x)\,dx<\infty$ and
$\int_{\R^l}\abs{\nabla m(x)}^2/m(x)\,dx<\infty$\,, 
$\breve G(\lambda,\nabla f,m)$ tends
to infinity as the $\mathbb L^2(\R^l,\R^l,m(x)\,dx)$--norm 
of $\nabla f$ tends to infinity, locally uniformly over $\lambda$\,.
Since, in addition, $\breve G(\lambda,\nabla f,m)$ is strictly convex in
$(\lambda,\nabla f)$\,, 
  the infimum  on the righthand side of
 \eqref{eq:13} is attained at  unique $\nabla f$\,, if finite,
 see, e.g.,
Proposition 1.2 on p.35 in Ekeland and Temam \cite{EkeTem76}.
(If $\lambda<1$\,, then $\breve G(\lambda,\nabla f,m)<\infty$\,, for
all $\nabla f\in\mathbb L^{1,2}_0(\R^l,\R^l,m(x)\,dx)$\,, by \eqref{eq:80} and \eqref{eq:11}.)
Hence, the righthand side of \eqref{eq:13} is strictly convex
in $\lambda$ on $(-\infty,1)$\,.
(For, let $\inf_{\nabla f\in\mathbb L^{1,2}_0(\R^l,\R^l,m(x)\,dx)}
\breve G(\lambda_i,\nabla f,m)=\breve G(\lambda_i,\nabla 
f_i,m)$\,, for $i=1,2$\,.
Then $\inf_{\nabla f\in\mathbb L^{1,2}_0(\R^l,\R^l,m(x)\,dx)}
\breve G((\lambda_1+\lambda_2)/2,\nabla f,m)
\le \breve G((\lambda_1+\lambda_2)/2,(\nabla f_1+\nabla f_2)/2,m)
<(\breve G(\lambda_1,\nabla f_1,m)
+\breve G(\lambda_2,\nabla f_2,m))/2
=(\inf_{\nabla f\in\mathbb L^{1,2}_0(\R^l,\R^l,m(x)\,dx)}
\breve G(\lambda_1,\nabla f,m)+\inf_{\nabla f\in\mathbb L^{1,2}_0(\R^l,\R^l,m(x)\,dx)}
\breve G(\lambda_2,\nabla f,m))/2$\,.)

By \eqref{eq:81}, by
 $\breve H(x;\lambda,p)$ being a lower semicontinuous
function of $(\lambda,p)$ with values in $\R\cup\{+\infty\}$\,,
 by \eqref{eq:11} and Fatou's
 lemma,  
 $\breve G(\lambda,\nabla f,m)$ is lower semicontinuous in 
$(\lambda,\nabla f)$ on $\R\times \mathbb
L^{1,2}_0(\R^l,\R^l,m(x)\,dx)$\,.
By a similar argument to that in 
Proposition 1.7 on p.14 in Aubin \cite{Aub93} or Proposition 5
on p.12 in Aubin and Ekeland \cite{AubEke84},
 the function $\breve F(\lambda,m)$
 is  lower semicontinuous in $\lambda$\,.
More specifically, let $\lambda_i\to\lambda$ and let 
$K_1=\liminf_{i\to\infty}
\breve F(\lambda_i,m)$\,. Assuming that $K_1<\infty$\,,
by \eqref{eq:13},
for all $i$ great enough,
\begin{equation*}
  \breve F(\lambda_i,m)=
\inf_{\nabla f\in\mathbb L^{1,2}_0(\R^l,\R^l,m(x)\,dx):\,\breve G(\lambda_i,\nabla f,m)\le K_1+1}\breve G(\lambda_i,\nabla f,m)\,.
\end{equation*}
By \eqref{eq:11} and \eqref{eq:81}, there exists $K_2$ such that, for
all $i$\,,
if $\breve G(\lambda_i,\nabla f,m)\le K_1+1$\,, then $\int_{\R^l}\abs{\nabla
  f(x)}^2\,m(x)\,dx\le K_2$\,. 
The  set of the latter 
$\nabla{f}$  being weakly compact in $\mathbb
L^{1,2}_0(\R^l,\R^l,m(x)\,dx)$ and the function $\breve G(\lambda,\nabla
f,m)$ being convex and lower semicontinuous in $\nabla f$\,, 
there exist $\nabla f_i$ such that 
$\breve F(\lambda_i,m)=
\breve G(\lambda_i,\nabla f_i,m)$
\,.
Extracting a suitable  subsequence of $\nabla f_i$ that weakly
converges to some $\nabla \tilde f$ and invoking the lower
semicontinuity of $ \breve G(\lambda,\nabla f,m)$ in $(\lambda,\nabla f)$ yields
\begin{multline*}
  \liminf_{i\to\infty}
\breve F(\lambda_i,m)=
\liminf_{i\to\infty}\inf_{\nabla f\in\mathbb
  L^{1,2}_0(\R^l,\R^l,m(x)\,dx):\,
\breve G(\lambda_i,\nabla f,m)\le K_1+1}\breve G(\lambda_i,\nabla f,m)\\\ge
\liminf_{i\to\infty}\inf_{\nabla f\in\mathbb L^{1,2}_0(\R^l,\R^l,m(x)\,dx):\,\int_{\R^l}\abs{\nabla
  f(x)}^2\,m(x)\,dx\le K_2}\breve G(\lambda_i,\nabla f,m)\\=
\liminf_{i\to\infty}\breve G(\lambda_i,\nabla f_i,m)
\ge \breve G(\lambda,\nabla \tilde f,m)
\ge \breve F(\lambda,m)\,.
\end{multline*}
We have proved that 
the function $
\breve F(\lambda,m)$ is lower semicontinuous in $\lambda$\,. 
It follows that   the function $\sup_{m\in\hat{\mathbb P}}
\breve F(\lambda,m)$
 is  lower semicontinuous.

Let us show that the gradients of functions from $\mathbb C^2\cap
\mathbb C^1_\ell$ make up a dense subset of $\mathbb
L^{1,2}_0(\R^l,\R^l,\hat m(x)\,dx)$\,. 
  Let $f\in\mathbb C^1_\ell$ and
 let  $\eta(y)$ represent a cut--off function, i.e., a
$[0,1]$--valued smooth nonincreasing function on $\R_+$ such that
$\eta(y)=1$ when $y\in[0,1]$ and $\eta(y)=0$ when $y\ge 2$\,. Let $R>0$\,.
The function $f(x)\eta(\abs{x}/R)$ belongs to $\mathbb C^1_0$\,. In
addition,
\begin{multline*}
  \int_{\R^l}\abs{\nabla f(x)-\nabla
\bl(f(x)\eta\bl(\frac{\abs{x}}{R}\br)\br)}^2m(x)\,dx
\le 2\int_{\R^l}\abs{\nabla
  f(x)}^2\bl(1-\eta\bl(\frac{\abs{x}}{R}\br)\br)^2m(x)\,dx\\
+\frac{2}{R^2}\,\int_{\R^l} f(x)^2\eta'\bl(\frac{\abs{x}}{R}\br)^2m(x)\,dx\,,
\end{multline*}
where $\eta'$ stands for the derivative of $\eta$\,.
Since $\int_{\R^l}\abs{x}^2\,m(x)\,dx$ converges, the righthand side of
the latter inequality tends to $0$ as $R\to\infty$\,.
Hence, $\nabla f\in\mathbb L^{1,2}_0(\R^l,\R^l,\hat m(x)\,dx)$\,. 
On the other hand, the
gradients of $\mathbb C^1_0$--functions can be approximated with the
gradients of $\mathbb C^2\cap
\mathbb C^1_\ell$--functions in $\mathbb L^{1,2}_0(\R^l,\R^l,\hat
m(x)\,dx)$\,, which ends the proof. 

On recalling \eqref{eq:13}, we obtain the leftmost equality in \eqref{eq:15}.
Similarly, since $G(\lambda,f,m)=\breve G(\lambda,\nabla f,m)$
when $f\in\mathbb C_0^2$ 
and the gradients of $\mathbb C_0^2$--functions are dense in $\mathbb
L^{1,2}_0(\R^l,\R^l,m(x)\,dx)$\,, the rightmost side of \eqref{eq:15}
equals the leftmost side.
For \eqref{eq:52}, we recall Lemma \ref{le:approx} and note that, as 
 the proof of Lemma \ref{le:minmax} shows,
$G(\lambda,f,m)=\breve G(\lambda,\nabla f,m)$ when
$f\in\mathcal{U}_\lambda$ and $\lambda<1$\,.  

By   \eqref{eq:80} and \eqref{eq:73}, as $\lambda\to-\infty$\,,
\begin{equation*}
  \lim_{\lambda\to-\infty}\frac{1}{\lambda^2}\,\inf_{p\in\R^l}
\bl( \breve H(x;\lambda,p)-\frac{1}{2}\,p^T\sigma(x)\sigma(x)^T
\,\frac{\nabla m(x)}{m(x)}\br)=
\frac{1}{2}\,\norm{\beta(x)}^2_{Q_2(x)}\,.
\end{equation*}
The latter quantity  being positive
 by the second part of condition (N)
implies, by \eqref{eq:13}, that
$  \liminf_{\lambda\to-\infty}(1/\lambda^2)
\breve F(\lambda,m)>0$\,, so,
$
  \liminf_{\lambda\to-\infty}(1/\lambda^2)\inf_{f\in\mathbb C_0^2} 
G(\lambda,f,m)>0\,.
$ By  \eqref{eq:29}, \eqref{eq:136}, and \eqref{eq:62}, 
$F(\lambda)\ge \inf_{f\in \mathbb C_0^2}G(\lambda,f,m)$\,, so, 
$  \liminf_{\lambda\to-\infty}F(\lambda)/\lambda^2>0$\,.
Therefore, for all $q$ from a bounded set,
the supremum in \eqref{eq:30} can be taken over $\lambda$
from the same compact set, which implies that $J_q$ is continuous.
With $J_q^{\text{o}}$ and $J_q^{\text{s}}$\,, a similar reasoning applies.
Since $\sup_{x\in\R^l}H(x;\lambda,f)$
is a convex function of $(\lambda,f)$\,, by \eqref{eq:29}, 
$F(\lambda)$ is convex. Being finite, it is continuous for
$\lambda<\overline\lambda$\,.

We prove the differentiability properties.
The  assertion in \eqref{eq:10a} follows
 by
  Theorem 4.13 on p.273
 in Bonnans and Shapiro
 \cite{BonSha00}   and dominated convergence, once we recall
\eqref{eq:80}
and \eqref{eq:11}. Equation \eqref{eq:222} is obtained similarly,
with $\breve G(\cdot,\cdot, m)$ as 
$f(\cdot,\cdot)$\,, with $\lambda$ as $u$\,, and with
$\nabla f$ as $x$\,, respectively, in the hypotheses of   Theorem 4.13 on p.273
 in Bonnans and Shapiro
 \cite{BonSha00}. In
some more detail, $\breve G(\lambda,\nabla f,m)$ and $d\breve
G(\lambda,\nabla f,m)/d\lambda$ are continuous functions of
$(\lambda,\nabla f)$ by \eqref{eq:40},  \eqref{eq:39}, and
\eqref{eq:11}.
The $\inf$--compactness condition on p.272 in Bonnans and Shapiro
\cite{BonSha00} holds because, as it has been shown in the proof of
the lower semicontinuity of $\breve F(\lambda,m)$\,, the infimum on
the righthand side of \eqref{eq:13} can be taken over the same weakly
compact subset of $\mathbb L^{1,2}_0(\R^l,\R^l,m(x)\,dx)$ for all
$\lambda$ from a compact subset of $(-\infty,1)$\,.
For \eqref{eq:163},
one can also  apply
 the reasoning of the proof of Theorem 4.13 on p.273 in Bonnans
and Shapiro \cite{BonSha00}. Although the hypotheses of the theorem
are not satisfied, the proof on pp.274,275  goes through, the key being that 
the function $\breve G(\lambda,\nabla f, m)$ tends to infinity uniformly
over $\lambda$ close enough to $1$ on the left, as the
$\mathbb L^2(\R^l,\R^l, m(x)\,dx)$--norm of $\nabla f$ tends to
infinity.

\end{proof}
\begin{remark}
  If condition (N) is not assumed, then strict convexity in the
  statement has to be replaced with convexity.
\end{remark}
\begin{remark}
  If $\beta(x)=0$\,, then
   $F(\lambda)/\lambda^2$ tends to
  zero as $\lambda\to-\infty$\,.
Furthermore,
\begin{equation*}
\liminf_{\lambda\to-\infty}\frac{1}{\abs{\lambda}}\,\inf_{f\in\mathbb C_0^2} 
G(\lambda,f,m)\ge -\int_{\R^l}r(x)m(x)\,dx\,,
\end{equation*}
so that
\begin{equation*}
    \liminf_{\lambda\to-\infty}\frac{F(\lambda)}{\abs{\lambda}}\ge
-\inf_{x\in\R^l}r(x)\,.
\end{equation*}
Consequently, 
if $\inf_{x\in\R^l}r(x)<q$\,, then $\lambda q-F(\lambda)$ tends to
$-\infty$ as $\lambda\to-\infty$, so $\sup_{\lambda\in\R}(\lambda
q-F(\lambda))$ is attained. That might not be the case if 
$\inf_{x\in\R^l}r(x)\ge q$\,. For instance, if the functions 
$a(x)$\,, $r(x)$\,, $b(x)$\,, and $\sigma(x)$ are constant and $q$ is
small enough, then the derivative of
$\lambda q-F(\lambda)$  is positive for all $\lambda<0$\,.
In particular, $J_q$\,, $J_q^{\text{s}}$\,, or $J_q^{\text{o}}$ 
might not be continuous at
$\inf_{x\in\R^l}r(x)$\,, $J_q^{\text{s}}$
being rightcontinuous and $J_q^{\text{o}}$
being leftcontinuous regardless.\end{remark}

\begin{lemma}
  \label{le:saddle_3}
  \begin{enumerate}
  \item 
The function
$\lambda q-\breve F(\lambda,m)$ has saddle point
 $(\hat\lambda,\hat m)$ in
$(-\infty,\overline\lambda]\times\hat{\mathbb P}$\,, with $\hat
\lambda$ being specified uniquely.
In addition,
$\hat
\lambda q-F(\hat\lambda)=\sup_{\lambda\in\R}(\lambda q-F(\lambda))$\,.
If  $\lambda\le\overline\lambda$\,, then
$    F(\lambda)=\sup_{m\in\hat{\mathbb   P}}
\breve F(\lambda,m)$\,.
 \item
Suppose that $\hat\lambda<1$\,.
Then the function  $\lambda q-\breve G(\lambda,\nabla f,m)$\,,
being  concave in $(\lambda,f)$ and
convex in $m$\,,  has  saddle point
$(\hat\lambda,\hat f,\hat m)$ in $(-\infty,\overline\lambda]\times
(\mathbb C^2\cap\mathbb C^1_\ell)\times \hat{\mathbb{P}}$\,, with
   $\nabla \hat
f$ and $\hat m$ being specified uniquely.
Equations 
  \eqref{eq:103'}  and  \eqref{eq:104'}  hold.
 \item Suppose that $\hat\lambda=1$\,. Then there exists
unique $\nabla\hat
  f\in\mathbb L^{1,2}_0(\R^l,\R^l,\hat m(x)\,dx)$ such that 
$\breve F(1,\hat m)=\breve G(1,\nabla\hat f,\hat m)$\,, 
$a(x)-r(x)\mathbf 1-b(x)\beta(x)+b(x)\sigma(x)^T\nabla\hat f(x)=0$ $\hat
m(x)\,dx$--a.e. and
\begin{equation*}
  \int_{\R^l}\bl(
\nabla h(x)^T\bl(
-\sigma(x)\beta(x)+\theta(x)+\sigma(x)\sigma(x)^T\nabla\hat f(x)\br)
+\frac{1}{2}\,
\text{tr}\,\bl(\sigma(x)\sigma(x)^T\nabla^2 h(x)\br)\br)\hat m(x)\,dx=0\,,
\end{equation*}
for all $h\in\mathbb C_0^2$ such that $b(x)\sigma(x)^T\nabla h(x)=0$
$\hat m(x)\,dx$--a.e.
\end{enumerate}\end{lemma}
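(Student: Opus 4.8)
The plan is to derive the statement from two ingredients: the inner minimax identity $F(\lambda)=\sup_{m\in\hat{\mathbb P}}\breve F(\lambda,m)$ valid for $\lambda\le\overline\lambda$, and the elementary maximisation of the concave upper semicontinuous function $\lambda\mapsto\lambda q-F(\lambda)$; the density $\hat m$ and the function $\hat f$ are then read off from the first--order (Euler--Lagrange) conditions at the respective maximisers, and the passage from $\breve F$ to $\breve G$ in parts~2 and~3 is packaged by the saddle--point criterion of Ekeland and Temam \cite{EkeTem76}. In the inner identity the inequality $F(\lambda)\ge\sup_m\breve F(\lambda,m)$ is immediate from \eqref{eq:52}: for $f\in\mathcal U_\lambda$ one has $\breve F(\lambda,m)\le G(\lambda,f,m)=\int_{\R^l}H(x;\lambda,f)\,m(x)\,dx\le\sup_{x\in\R^l}H(x;\lambda,f)$, and one takes $\sup$ over $m$ on the left and $\inf$ over $f$ on the right. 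For the reverse inequality, when $\lambda\le\lambda_0$ Lemma \ref{le:sup-comp} and integration by parts give $\breve F(\lambda,m)\le\breve G(\lambda,\nabla f_\kappa,m)=\int_{\R^l}H(x;\lambda,f_\kappa)\,m(x)\,dx\le K_1-K_2\int_{\R^l}\abs{x}^2\,m(x)\,dx$, so $m\mapsto\breve F(\lambda,m)$ is concave, upper semicontinuous and $\sup$--compact on $\hat{\mathbb P}$ for $d_1$ and attains its maximum at some $\hat m$; since by Lemma \ref{le:conc} the map $G(\lambda,\cdot,\hat m)$ has a unique minimiser $g\in\mathbb C^2\cap\mathbb C^1_\ell$, a Danskin--type computation of the directional derivative of $\breve F(\lambda,\cdot)$ (cf.\ Theorem~4.13 of Bonnans and Shapiro \cite{BonSha00}), applied by testing $\hat m$ against densities concentrated near an arbitrary point, forces $H(x;\lambda,g)\le\breve F(\lambda,\hat m)$ for all $x$; hence $g\in\mathcal U_\lambda$, $g$ solves the ergodic Bellman equation \eqref{eq:86}, and $F(\lambda)\le\sup_xH(x;\lambda,g)=\breve F(\lambda,\hat m)\le\sup_m\breve F(\lambda,m)$.

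For $\lambda_0<\lambda<\overline\lambda$ I would instead use the Bellman solution $f^\lambda$ of Lemma \ref{le:minmax}: since $H(\cdot;\lambda,f^\lambda)\equiv F(\lambda)$, the integration by parts \eqref{eq:12} gives $\breve G(\lambda,\nabla f^\lambda,m)=F(\lambda)$ for every $m\in\hat{\mathbb P}$, so $\breve F(\lambda,m)\le F(\lambda)$ with equality precisely when $\nabla f^\lambda$ is the (unique, Lemma \ref{le:conc}) minimiser of $\breve G(\lambda,\cdot,m)$, i.e.\ precisely when $m$ solves the stationary Fokker--Planck equation \eqref{eq:104'} with $\nabla\hat f=\nabla f^\lambda$; it therefore suffices to produce $\hat m\in\hat{\mathbb P}$ solving \eqref{eq:104'}, namely the stationary density of the diffusion with drift $\theta(\cdot)+\lambda\,\sigma(\cdot)N(\hat u(\cdot),\cdot)+\sigma(\cdot)\sigma(\cdot)^T\nabla f^\lambda(\cdot)$, whose existence in $\hat{\mathbb P}$ I would obtain from Lemma \ref{le:density_differ} after checking, with the help of $\breve G(\lambda,\nabla f^\lambda,\hat m)=F(\lambda)$ and the coercivity bound \eqref{eq:81} for $\breve H$, that $\int_{\R^l}H(x;\lambda,h)\,\hat m(dx)$ stays bounded below over $h\in\mathbb C_0^2$. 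The boundary value $\lambda=\overline\lambda=1$ follows from the lower semicontinuity of $\breve F(\cdot,m)$ and of $F$ together with a passage to the left limit $\lambda\uparrow1$. Whenever the supremum over $m$ is attained, any maximiser induces the Bellman solution as the minimiser of $\breve G(\lambda,\cdot,\cdot)$ and hence solves \eqref{eq:104'} for a common $\nabla f^\lambda$; so, in the case $\hat\lambda<1$, the maximiser is the unique stationary density attached to that drift.

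For the scalar problem, $F$ is convex, lower semicontinuous, tends to $\infty$ superlinearly as $\lambda\to-\infty$, and equals $\infty$ for $\lambda>\overline\lambda$ (Lemma \ref{le:conc}), so $\lambda\mapsto\lambda q-F(\lambda)$ attains its maximum over $\R$ at some $\hat\lambda\le\overline\lambda$ --- this maximum being $J_q$ of \eqref{eq:30} --- and $\hat\lambda$ is unique by the strict convexity of $\breve F(\cdot,\hat m)$. Because $q\in\partial F(\hat\lambda)$ and, by the previous paragraph, $\partial F(\hat\lambda)=\{\frac{d}{d\lambda}\breve F(\hat\lambda,m):m\text{ maximises }\breve F(\hat\lambda,\cdot)\}$, one may select a maximiser $\hat m$ with $q=\frac{d}{d\lambda}\breve F(\hat\lambda,\hat m)$; then $\hat\lambda$ also maximises $\lambda\mapsto\lambda q-\breve F(\lambda,\hat m)$ (first--order condition plus convexity), and, $\hat m$ being a maximiser of $\breve F(\hat\lambda,\cdot)$, the pair $(\hat\lambda,\hat m)$ is a saddle point of $\lambda q-\breve F(\lambda,m)$ on $(-\infty,\overline\lambda]\times\hat{\mathbb P}$. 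For part~2 ($\hat\lambda<1$), put $\hat f=f^{\hat\lambda}$: then \eqref{eq:103'} is \eqref{eq:86}, \eqref{eq:104'} is the Euler--Lagrange equation expressing that $\nabla\hat f$ minimises $\breve G(\hat\lambda,\cdot,\hat m)$, and, by \eqref{eq:12} and $H(\cdot;\hat\lambda,\hat f)\equiv F(\hat\lambda)$, the map $m\mapsto\breve G(\hat\lambda,\nabla\hat f,m)$ is constant and equal to $F(\hat\lambda)$, so $\hat m$ trivially maximises it; by Proposition~1.6 of Ekeland and Temam \cite{EkeTem76} the pair $(\hat f,\hat m)$ is therefore a saddle point of $\breve G(\hat\lambda,\nabla f,m)$, and adjoining the optimality of $\hat\lambda$ from the scalar problem upgrades it to a saddle point of $\lambda q-\breve G(\lambda,\nabla f,m)$ on $(-\infty,\overline\lambda]\times(\mathbb C^2\cap\mathbb C^1_\ell)\times\hat{\mathbb P}$, with $\nabla\hat f$ unique by the uniqueness in \eqref{eq:13} and $\hat m$ unique as the stationary density of \eqref{eq:104'}.

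Finally, for part~3 ($\hat\lambda=1$): since $\hat\lambda\le\overline\lambda\le1$ this forces $\overline\lambda=1$, and as $\lambda\mapsto\lambda q-\breve F(\lambda,\hat m)$ is maximised at $1$, the infimum in \eqref{eq:13} at $(1,\hat m)$ is finite, hence attained at a unique $\nabla\hat f$ by Lemma \ref{le:conc}, and $F(1)<\infty$. Finiteness of $\breve G(1,\nabla\hat f,\hat m)$ forces $\breve H(x;1,\nabla\hat f(x))<\infty$ $\hat m(x)\,dx$--a.e., which by \eqref{eq:135} is exactly $a(x)-r(x)\ind-b(x)\beta(x)+b(x)\sigma(x)^T\nabla\hat f(x)=0$ a.e.; and the Euler--Lagrange equation for minimising $\breve G(1,\cdot,\hat m)$ over the affine set carved out by this constraint --- whose admissible perturbations $\nabla h$ are exactly those with $b(x)\sigma(x)^T\nabla h(x)=0$ $\hat m(x)\,dx$--a.e.\ --- yields, after inserting \eqref{eq:61} for $\breve H(x;1,\cdot)$ and integrating by parts twice, precisely \eqref{eq:137}. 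The step I expect to be the main obstacle is the inner identity $F(\lambda)=\sup_m\breve F(\lambda,m)$ for $\lambda$ outside the small--parameter regime $\lambda\le\lambda_0$ of Lemma \ref{le:sup-comp}, together with the attendant attainment of $\hat m$ at $\hat\lambda$: there the $\sup$--compactness of $m\mapsto\breve F(\lambda,m)$ is unavailable and the argument must be routed through the Bellman solution $f^\lambda$, the integration by parts \eqref{eq:12}, the coercivity estimate \eqref{eq:81} and Lemma \ref{le:density_differ}, verifying that the stationary Fokker--Planck problem \eqref{eq:104'} admits a solution in $\hat{\mathbb P}$.
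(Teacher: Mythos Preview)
Your proposal correctly identifies the architecture of the proof and handles parts~2 and~3 essentially as the paper does. The genuine gap is exactly where you flag it: the identity $F(\lambda)=\sup_{m\in\hat{\mathbb P}}\breve F(\lambda,m)$ together with attainment of $\hat m$ when $\lambda>\lambda_0$. Your plan there is to construct $\hat m$ as the stationary density of the diffusion with drift $\theta+\lambda\sigma N(\hat u,\cdot)+\sigma\sigma^T\nabla f^\lambda$, i.e.\ as a solution to \eqref{eq:104'}, and then check membership in $\hat{\mathbb P}$ via Lemma~\ref{le:density_differ}. But for $\lambda>0$ that drift need not satisfy any Lyapunov condition (the stabilising term $\theta(x)^Tx$ can be overwhelmed), so existence of a stationary measure is not available a priori; the paper itself remarks immediately after this lemma that the Kaise--Sheu and Ichihara machinery does not apply in this range. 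Lemma~\ref{le:density_differ} only upgrades an existing measure to a density in $\hat{\mathbb P}$ --- it does not produce one. So this route, as outlined, does not close. A secondary issue is your selection of $\hat m$ via the envelope identity $\partial F(\hat\lambda)=\{\tfrac{d}{d\lambda}\breve F(\hat\lambda,m):m\text{ maximiser}\}$: this requires justification, and at $\hat\lambda=\overline\lambda$ the function $F$ may fail to be subdifferentiable.

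The paper circumvents both obstacles by a different device: it applies the Aubin--Ekeland minimax theorem not to $f$ alone for each fixed $\lambda$, but to the \emph{joint} variable $(\lambda,f)$ ranging over the convex set $\mathcal U=\{(\lambda,f):f\in\mathcal U_\lambda\}$, against $\nu\in\mathcal P$. Because $\mathcal U$ always contains pairs with $\lambda<0$, the $\inf$--compactness in $\nu$ required by the minimax theorem is supplied by Lemma~\ref{le:sup-comp} from that region, irrespective of which $\hat\lambda$ eventually emerges. This yields a single minimax identity \eqref{eq:74} with the infimum over $\nu$ attained at some $\hat\nu$, which Lemma~\ref{le:density_differ} then shows has density $\hat m\in\hat{\mathbb P}$. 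The pointwise identity $F(\lambda)=\sup_m\breve F(\lambda,m)$ for all $\lambda\le\overline\lambda$ is then extracted by a biduality trick: from \eqref{eq:74} one gets $\sup_\lambda(\lambda\tilde q-\sup_m\inf_f G)=\sup_\lambda(\lambda\tilde q-F(\lambda))$ for every $\tilde q$, and since $F$ is convex and lower semicontinuous it coincides with its biconjugate, forcing $\sup_m\inf_f G=F$ pointwise. The existence of $\hat m$ at the specific value $\hat\lambda$ then falls out of the attainment in \eqref{eq:74} combined with the superlinear growth of $\breve F(\cdot,\hat m)$ at $-\infty$, with no case split on the sign or size of $\hat\lambda$.
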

\begin{proof}

Let $\mathcal{U}=\{(\lambda,f):\, f\in\mathcal{U}_\lambda\}$\,. It is
a convex set by $H(x;\lambda,f)$ being convex in $(\lambda,f)$\,.
Let $\tilde q\in\R$\,.
When  $(\lambda,f)\in\mathcal{U}$ and $\nu\in\mathcal{P}$\,, 
 the function $\lambda\tilde q-
\int_{\R^l} H(x;\lambda,f)\,\nu(dx)$ is well defined,
being possibly equal to $+\infty$\,, is concave
 in  $(\lambda,f)$\,,
 is convex and lower semicontinuous in $\nu$\,, and is
$\inf$--compact in $\nu$\,, provided $\lambda<0$\,, the latter
property holding by Lemma \ref{le:sup-comp}.
Theorem
7 on p.319 in Aubin and Ekeland \cite{AubEke84}, whose proof applies
to the case of the function $f(x,y)$ in the statement of the theorem
taking  values in $\R\cup \{+\infty\}$
  yields the identity
\begin{equation}
  \label{eq:74}
\inf_{\nu\in\mathcal{P}}\sup_{(\lambda,f)\in\mathcal{U}}\bl(\lambda
\tilde q-\int_{\R^l} H(x;\lambda,f)\,\nu(dx)\br)=
\sup_{(\lambda,f)\in\mathcal{U}}\inf_{\nu\in\mathcal{P}}\bl(\lambda
\tilde q-\int_{\R^l} H(x;\lambda,f)\,\nu(dx)\br)\,,
\end{equation} 
with the infimum on the lefthand side being attained, at  $\hat
\nu$\,.  If $\nu$ has no density with
respect to Lebesgue measure that belongs to $\hat{ \mathbb P}$\,,
then, by Lemma \ref{le:density_differ}, the supremum on the lefthand side
equals $+\infty$\,. Hence, the infimum on the lefthand side may be
taken over $\nu$ with densities from $\hat{\mathbb P}$\,, in
particular, it may be assumed that
 $\hat\nu(dx)=\hat m(x)\,dx$\,, where
$\hat m\in\hat{\mathbb P}$\,.
 We thus have that
\begin{equation}
  \label{eq:2}
  \inf_{m\in\hat{\mathbb P}}\sup_{\lambda\in\R}(\lambda \tilde q-\inf_{f\in\mathcal{U}_\lambda}
G(\lambda,f, m))=\sup_{\lambda\in\R}(\lambda
\tilde q-\inf_{f\in\mathbb C^2\cap \mathbb C^1_\ell}\sup_{x\in\R^l}
H(x;\lambda,f))\,.
\end{equation}
(We recall that if $\mathcal{U}_\lambda=\emptyset$\, then 
$\inf_{f\in\mathcal{U}_\lambda}=\infty$\,.)
By part 2 of Lemma \ref{le:conc},
$\inf_{f\in\mathcal{U}_\lambda}
G(\lambda,f, m)\to\infty$ superlinearly, as $\lambda\to-\infty$\,,
which, when combined with \eqref{eq:81},
 implies that both sides of \eqref{eq:2} are finite.
We have that
\begin{equation*}
  \inf_{m\in\hat{\mathbb P}}
\sup_{\lambda\in\R}(\lambda \tilde q-\inf_{f\in\mathcal{U}_\lambda}
G(\lambda,f, m))
\ge \sup_{\lambda\in\R}\inf_{m\in\hat{\mathbb P}}
(\lambda \tilde q-\inf_{f\in\mathcal{U}_\lambda}
G(\lambda,f, m)) 
\ge \sup_{\lambda\in\R}
(\lambda \tilde q-\inf_{f\in\mathcal{U}_\lambda}\sup_{m\in\hat{\mathbb P}}
G(\lambda,f, m))\,.
\end{equation*}
The latter rightmost side being equal to the rightmost side of
\eqref{eq:2} and the definition of $F(\lambda)$ in \eqref{eq:29} imply that
\begin{equation}
  \label{eq:32}
  \sup_{\lambda\in\R}
(\lambda \tilde q-\sup_{m\in\hat{\mathbb P}}\inf_{f\in\mathcal{U}_\lambda}
G(\lambda,f, m))=\sup_{\lambda\in\R}(\lambda
\tilde q-\inf_{f\in\mathbb C^2\cap \mathbb C^1_\ell}\sup_{x\in\R^l}
H(x;\lambda,f))=
\sup_{\lambda\in\R}(\lambda
\tilde q-F(\lambda))\,.
\end{equation}
Therefore, for arbitrary $\lambda\in\R$ and $\tilde q\in\R$\,,
\begin{equation}
  \label{eq:34}
    \sup_{m\in\hat{\mathbb P}}\inf_{f\in\mathcal{U}_{\lambda}}
G(\lambda,f, m)\ge \lambda \tilde q-\sup_{\tilde\lambda\in\R}(\tilde\lambda
\tilde q-F(\tilde\lambda))\,.
\end{equation}
Since $F$ is a lower semicontinuous and convex function, it equals its
bidual, so,  taking supremum over $\tilde q$ in \eqref{eq:34} yields
the inequality $      \sup_{m\in\hat{\mathbb P}}\inf_{f\in\mathcal{U}_{\lambda}}
G(\lambda,f, m)\ge F(\lambda)\,.$
The opposite inequality being true by the definition of $F(\lambda)$
(see \eqref{eq:29})
implies that
\begin{equation}
  \label{eq:42}
F(\lambda)=  \sup_{m\in\hat{\mathbb P}}\inf_{f\in\mathcal{U}_{\lambda}}
G(\lambda,f, m)\,.
\end{equation}
In addition, owing to Lemma \ref{le:conc}, if
$\lambda<\overline\lambda$\,, then
\begin{equation}\label{eq:71}
  F(\lambda)=\sup_{m\in\hat{\mathbb   P}}\inf_{f\in
\mathbb C^2\cap \mathbb C^1_\ell}\breve G(\lambda,\nabla f,m)
=\sup_{m\in\hat{\mathbb   P}}\breve F(\lambda,m)\,.
\end{equation}
By convexity and lower semicontinuity, the latter equality extends to $\lambda=\overline\lambda$\,.

Since the infimum on the lefthand side of \eqref{eq:2} is attained at
$\hat m$\,, by \eqref{eq:42}, \begin{multline}
    \label{eq:3}
\sup_{\lambda\in\R}\bl(\lambda
q-\inf_{f\in\mathcal{U}_\lambda}G(\lambda,f,\hat m)\br)=
\inf_{m\in\hat{\mathbb P}}\sup_{\lambda\in\R}\bl(\lambda
q-\inf_{f\in\mathcal{U}_\lambda}G(\lambda,f,m)\br)\\
= 
\sup_{\lambda\in\R}\inf_{m\in\hat{\mathbb P}}\bl(\lambda
q-\inf_{f\in\mathcal{U}_\lambda}G(\lambda,f,m)\br)\,.
\end{multline}
By convexity of $\inf_{f\in\mathcal{U}_\lambda}
G(\lambda,f, \hat m)$ and of $\breve F(\lambda,\hat m)$ in $\lambda$\,,
we have that $\inf_{f\in\mathcal{U}_{\overline{\lambda}}}
G(\overline\lambda,f, \hat m)$ 
and  $\breve F(\overline\lambda,\hat m)$ are greater than
or equal to their respective lefthand limits at $\overline\lambda$\,,
so,
 by the fact that $\mathcal{U}_\lambda=\emptyset$ if
$\lambda>\overline\lambda$ and part 2 of Lemma \ref{le:conc},
\begin{equation*}
  \sup_{\lambda\in\R}(\lambda q-\inf_{f\in\mathcal{U}_\lambda}
G(\lambda,f, \hat m))=
\sup_{\lambda<\overline \lambda}(\lambda q-\inf_{f\in\mathcal{U}_\lambda}
G(\lambda,f, \hat m))=
\sup_{\lambda<\overline \lambda}(\lambda q-\breve F(\lambda, \hat m))
=\sup_{\lambda\le\overline \lambda}(\lambda q-\breve F(\lambda, \hat m))\,.
\end{equation*}
Similarly,
\begin{equation*}
  \inf_{m\in\hat{\mathbb P}}\sup_{\lambda\in\R}\bl(\lambda
q-\inf_{f\in\mathcal{U}_\lambda}G(\lambda,f,m)\br)=
\inf_{m\in\hat{\mathbb P}}\sup_{\lambda\le\overline\lambda}\bl(\lambda
q-\breve F(\lambda,m)\br)
\end{equation*}
and
\begin{equation*}
\sup_{\lambda\in\R}\inf_{m\in\hat{\mathbb P}}\bl(\lambda
q-\inf_{f\in\mathcal{U}_\lambda}G(\lambda,f,m)\br)
=  \sup_{\lambda\le\overline\lambda}\inf_{m\in\hat{\mathbb P}}\bl(\lambda
q-\breve F(\lambda, m)\br)\,,
\end{equation*}
so, by \eqref{eq:3}, 
\begin{equation*}
\sup_{\lambda\le\overline \lambda}(\lambda q-\breve F(\lambda, \hat m))=
    \inf_{m\in\hat{\mathbb P}}\sup_{\lambda\le\overline\lambda}\bl(\lambda
q-\breve F(\lambda, m)\br)=
\sup_{\lambda\le\overline\lambda}\inf_{m\in\hat{\mathbb P}}\bl(\lambda
q-\breve F(\lambda, m)\br)\,.
\end{equation*}
 Since, by Lemma \ref{le:conc}, $\breve F(\lambda,\hat m)$ is a lower
 semicontinuous function of $\lambda$ and
 $\breve F(\lambda,\hat m)\to\infty$ superlinearly as $\lambda\to
-\infty$\,,  the supremum on the leftmost side
  is attained at some $\hat \lambda$\,.
It follows  that
 $(\hat\lambda,\hat
m)$ is a saddle point of 
$\lambda q-\breve F(\lambda,m)$
in $(-\infty,\overline\lambda]\times\hat{\mathbb P}$\,. 
By Lemma \ref{le:conc},
 $\lambda q-\breve F(\lambda,m)$ is a strictly
concave function of $\lambda$ on $(-\infty,1)$ for all $m$\,, so $\hat \lambda$ is
specified uniquely,  see 
 Proposition 1.5 on p.169 in Ekeland and Temam
\cite{EkeTem76}. 

We obtain that
\begin{multline*}
  \sup_{\lambda\in\R}(\lambda q-F(\lambda))=
  \sup_{\lambda\le\overline\lambda}(\lambda q-F(\lambda))=
\sup_{\lambda\le\overline\lambda
}(\lambda q-\sup_{m\in\hat{\mathbb P}} \breve F(\lambda, m))=
\hat\lambda q-  
\breve F(\hat\lambda,\hat m)\\
=\hat\lambda q-  
\sup_{m\in\hat{\mathbb P}}\breve F(\hat\lambda, m)
=\hat\lambda q-F(\hat\lambda)\,.
\end{multline*}
Part 1 has been proved.

Suppose that $\hat\lambda<1$ and 
let $\hat f=f^{\hat\lambda}$\,, where $f^\lambda$ is introduced in
Lemma \ref{le:minmax}. Since $H(x;\hat\lambda,\hat f)=F(\hat\lambda)$
for all $x\in\R^l$\,, we have that $F(\hat\lambda)=G(\hat\lambda,\hat
f, m)=\breve G(\hat\lambda,\nabla\hat f, m)$\,, for all
$m\in\hat{\mathbb P}$\,. 
By \eqref{eq:12},
\begin{equation}
  \label{eq:115}
  \inf_{f\in\mathbb C^2\cap \mathbb C^1_\ell}
\sup_{m\in\hat{\mathbb P}}\breve G(\hat\lambda,\nabla f,m)\le\sup_{m\in\hat{\mathbb P}}
\breve G(\hat\lambda,\nabla\hat f,
  m)=F(\hat\lambda)=\breve G(\hat\lambda,\nabla\hat f,
  \hat m)\,.
\end{equation}
By \eqref{eq:71}, the 
inequality is actually equality and
$(\hat f,\hat m)$ is a saddle point of $\breve G(\hat\lambda,\nabla f,m)$ in
$(\mathbb C^2\cap\mathbb C^1_\ell)\times \hat{\mathbb P}$\,,
see, e.g., Proposition 2.156 on
p.104 in Bonnans and Shapiro \cite{BonSha00} or Proposition 1.2 on
p.167 in Ekeland and Temam \cite{EkeTem76}.
As a result,
\begin{equation}
  \label{eq:120}
\inf_{\tilde f\in\mathbb C^2\cap \mathbb C^1_\ell}\breve G(\hat\lambda,
  \nabla\tilde f,\hat m)=
\breve G(\hat\lambda,\nabla\hat f,\hat m)\,.
\end{equation}
By \eqref{eq:13} and $\mathbb C^2\cap \mathbb C^1_\ell$ being dense in
$\mathbb L^{1,2}_0(\R^l,\R^l,\hat m(x)\,dx)$\,, the lefthand side of
\eqref{eq:120} equals $\breve F(\lambda,\hat m)$\,, so, the infimum on
the righthand side of \eqref{eq:13} for $m=\hat m$ is attained at the 
gradient of the $\mathbb C^2\cap
\mathbb C^1_\ell$--function $\hat f$\,.

The following reasoning shows that $(\hat\lambda,\hat f,\hat m)$ is
a saddle point of $\lambda q-\breve G(\lambda,\nabla f,m)$
in $(-\infty,\overline\lambda]
\times(\mathbb C^2\cap \mathbb C^1_\ell)\times \hat{\mathbb P}$\,.
Let $\lambda\le\overline\lambda$\,,  
$f\in\mathbb C^2\cap \mathbb C^1_\ell$\,, and 
$m\in\hat{\mathbb P}$\,. 
Since $\breve G(\hat\lambda,\nabla\hat f,\hat m)
\ge \breve G(\hat\lambda,\nabla\hat f,m)$ by 
$(\hat f,\hat m)$ being a saddle point of $\breve G(\hat\lambda,\nabla
f,m)$\,, we
have that
\begin{equation}
  \label{eq:121}
  \hat\lambda q-\breve G(\hat\lambda,\nabla\hat f,\hat m)\le
\hat\lambda q-\breve G(\hat\lambda,\nabla\hat f,m)\,.
\end{equation}
By \eqref{eq:120}, by \eqref{eq:13},
and by $(\hat\lambda,\hat m)$ being a saddle point of 
$\lambda q-\breve F(\lambda,m)$\,,
\begin{equation}
  \label{eq:123}
\hat\lambda q- \breve G(\hat\lambda,\nabla\hat f,\hat m)=\hat\lambda q-
\breve F(  \hat\lambda,\hat m)\ge\lambda q-\hat F(  \lambda,\hat m)
  \ge
  \lambda q- \breve G(\lambda,\nabla f,\hat m)\,.
\end{equation}
Putting together \eqref{eq:121} and \eqref{eq:123} yields the required
property. 

Since $(\hat\lambda,\hat f,\hat m)$ is
a saddle point of $\lambda q-\breve G(\lambda,\nabla f, m)$
in $(-\infty,\overline\lambda]\times
(\mathbb C^2\cap\mathbb C^1_\ell)\times\hat{\mathbb P}$
 and $\lambda q-\breve G(\lambda,\nabla f, m)$
is strictly concave in $(\lambda,\nabla f)$ for all $m$\,, 
the pair $(\hat\lambda,\nabla\hat f)$ is specified uniquely, see
 Proposition 1.5 on p.169 of Ekeland and Temam
\cite{EkeTem76}. 
Equation \eqref{eq:103'} follows by 
Lemma \ref{le:minmax}. Since $\hat f$ is a stationary point of
$\breve G(\hat\lambda,\nabla f,\hat m)$\,,
the directional derivatives of
$\breve G(\hat\lambda,\nabla f,\hat m)$ at $\hat f$ are equal to zero, 
cf. Proposition 1.6
on p.169 in Ekeland and Temam \cite{EkeTem76}. By \eqref{eq:11},
\begin{equation}
  \label{eq:65}
    \int_{\R^l}\Bl(\breve H_p(x;\hat\lambda,\nabla \hat f(x))
-\frac{1}{2}\, \,\frac{
\bl(\text{div}\,({\sigma(x)}{\sigma(x)}^T\,
 \hat m(x))\br)^T}{\hat m(x)}\,
\Br)\nabla h(x)\,\hat m(x)\,dx=0\,,
\end{equation}
for all $h\in \mathbb C_0^2$\,. Integration by parts yields
 \eqref{eq:104'}.
In  more detail, by Theorem 4.17 on p.276 in Bonnans and Shapiro
\cite{BonSha00}, if $\lambda<1$\,, then  the function
 $\sup_{u\in\R^n}\bl(  M(u,x)
+\lambda
\abs{N(u,x)}^2/2+ p^T\sigma(x) N(u,x)\br)$\,, with the supremum
being attained at  unique point $\tilde u(x)$\,,  
has a  derivative  with respect to $p$ given by
$(\sigma(x) N(\tilde u(x),x))^T$\,, which, when combined with
 \eqref{eq:62} and \eqref{eq:65}, 
yields  \eqref{eq:104'}. By Example 1.7.11 (or Example 1.7.14)
 in Bogachev, Krylov, and
R\"ockner \cite{BogKryRoc}, $\hat m$ is specified uniquely by \eqref{eq:104'}.
Part 2 has been proved.

If $\hat \lambda=1$\,, then $\breve F(1,\hat m)<\infty$\,. By Lemma
\ref{le:conc}, $\nabla \hat f$ exists. The other properties in part 3 follow by
\eqref{eq:135} and \eqref{eq:61}. 
\end{proof}
\begin{remark}
  If $\hat\lambda<0$\,, then $H(x;\hat\lambda,f_\kappa)\to-\infty$ as
  $\abs{x}\to\infty$\,, where  $\kappa>0$ and is
  small enough, see Puhalskii and Stutzer \cite{PuhStu16}. In that
  case, the  theory in Keise and Sheu \cite{KaiShe06} and
  Ichihara \cite{Ich11} yields an alternative approach to 
the existence of solution $\hat m$ to \eqref{eq:103'}.
 If $\hat\lambda>0$\,, however, those results do not
  seem to apply.
\end{remark}
\begin{remark}
  If the suprema in  \eqref{eq:71} were
  attained, then $F(\lambda)$ would be strictly convex.
\end{remark}

   \begin{lemma}
  \label{le:saddle_2}
 Suppose that $\hat\lambda\le0$\,. Then, for $\kappa>0$
small enough,
\begin{equation*}
  \inf_{f\in\mathcal{A}_\kappa}
\sup_{\nu\in\mathcal{P}}
\int_{\R^l}\overline H(x;\hat\lambda,f, \hat u^\rho) \nu(dx)=
  \sup_{\nu\in\mathcal{P}}\inf_{f\in\mathbb C^2_0}
\int_{\R^l}\breve H(x;\hat\lambda,f,\hat u^\rho)\nu(dx)
=  \inf_{f\in\mathbb C^2}
\sup_{x\in\R^l}
\overline H(x;\hat\lambda,f, \hat u^\rho)
\,.
\end{equation*}
\end{lemma}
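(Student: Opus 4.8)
The plan is to follow the proof of Lemma~\ref{le:saddle_3}: exhibit the convex--concave structure of the map $(f,\nu)\mapsto\int_{\R^l}\overline H(x;\hat\lambda,f,\hat u^\rho)\,\nu(dx)$, supply the requisite compactness, apply the minimax theorem of Aubin and Ekeland~\cite{AubEke84} (Theorem 7 on p.~319 there), and then invoke the $\overline H$--analogues of the reductions carried out in Lemmas~\ref{le:minmax}, \ref{le:approx} and~\ref{le:density_differ} and in Remark~\ref{re:inf}. (We read the middle member of the asserted identity with $\overline H$ in place of $\breve H$, which has only three arguments.) Since for $\hat\lambda=0$ the control drops out of \eqref{eq:53}, so that $\overline H(x;0,f,\hat u^\rho)=H(x;0,f)$ and the statement reduces to identities already in hand, one may assume $\hat\lambda<0$.

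First I would record the structural facts. By \eqref{eq:53}, for a frozen control $\overline H(x;\hat\lambda,f,\hat u^\rho)$ is affine in $\nabla^2 f(x)$ and a convex quadratic in $\nabla f(x)$, hence convex in $f$; consequently $\int_{\R^l}\overline H(x;\hat\lambda,f,\hat u^\rho)\,\nu(dx)$ is convex in $f$ on each convex class $\mathcal A_\kappa$, $\mathbb C^2\cap\mathbb C^1_\ell$, $\mathbb C^2$, and is linear, so concave, in $\nu$. The key estimate, the analogue of \eqref{eq:23a}, is that for $\kappa$ small enough (as in Lemma~\ref{le:sup-comp}, possibly smaller)
\[
\overline H(x;\hat\lambda,f_\kappa,\hat u^\rho)\le K_1-K_2\abs{x}^2,\qquad x\in\R^l,
\]
with $K_2>0$ (and $K_1$ allowed to depend on $\rho$, which is harmless here since $\rho$ is fixed). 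Indeed, $\hat u^\rho=\hat u\,\chi_{[0,\rho]}(\abs{\cdot})$ is bounded, being the restriction to a ball of the continuous function $\hat u$, so $M(\hat u^\rho(x),x)$ is of at most linear growth in $x$ and $N(\hat u^\rho(x),x)$ is bounded; substituting $\nabla f_\kappa(x)=\kappa x$ and $\nabla^2 f_\kappa(x)=\kappa I_l$ into \eqref{eq:53}, the only terms capable of quadratic growth are $\tfrac12\kappa^2\abs{\sigma(x)^Tx}^2$ and $\kappa\,x^T\theta(x)$, and by \eqref{eq:45} the latter dominates, being at most $-\kappa c_0\abs{x}^2$ for large $\abs{x}$; splitting at $\abs{x}=\rho$ — on the ball $\overline H(x;\hat\lambda,f_\kappa,\hat u^\rho)$ is continuous, hence bounded — yields the bound once $\kappa$ is so small that the $\abs{x}^2$--coefficient is negative.

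Granted this estimate, the argument of Lemma~\ref{le:sup-comp} shows that at $f=f_\kappa$ the map $\nu\mapsto\int_{\R^l}\overline H(x;\hat\lambda,f_\kappa,\hat u^\rho)\,\nu(dx)$ is $\sup$--compact on $(\mathcal P,d_1)$ in the sense needed by Aubin and Ekeland: its superlevel sets have uniformly bounded second moments, hence are tight and relatively compact by Prohorov's theorem, and relative compactness of the superlevel sets is all their theorem requires, cf.\ the terminology discussion in Section~\ref{sec:prelim}. Applying that theorem to $(\nu,f)\mapsto-\int_{\R^l}\overline H(x;\hat\lambda,f,\hat u^\rho)\,\nu(dx)$ — convex in $\nu$, concave in $f$, $\inf$--compact in $\nu$ at $f=f_\kappa$ — first on $\mathcal P\times\mathcal A_\kappa$ and then on $\mathcal P\times(\mathbb C^2\cap\mathbb C^1_\ell)$, and using the $\overline H$--version of Lemma~\ref{le:density_differ} to restrict the inner infima over $\nu$ to measures with densities in $\hat{\mathbb P}$, yields, for $\mathcal C=\mathcal A_\kappa$ and for $\mathcal C=\mathbb C^2\cap\mathbb C^1_\ell$,
\[
\inf_{f\in\mathcal C}\sup_{\nu\in\mathcal P}\int_{\R^l}\overline H(x;\hat\lambda,f,\hat u^\rho)\,\nu(dx)=\sup_{\nu\in\mathcal P}\inf_{f\in\mathcal C}\int_{\R^l}\overline H(x;\hat\lambda,f,\hat u^\rho)\,\nu(dx).
\]
On the left, $\sup_{\nu\in\mathcal P}\int_{\R^l}g\,\nu(dx)=\sup_{x\in\R^l}g(x)$ by point masses, and $\inf_{f\in\mathbb C^2\cap\mathbb C^1_\ell}\sup_x\overline H(x;\hat\lambda,f,\hat u^\rho)=\inf_{f\in\mathbb C^2}\sup_x\overline H(x;\hat\lambda,f,\hat u^\rho)$ by the argument of the remark following Lemma~\ref{le:minmax} — its proof uses only the quadratic dependence of $H$ on $\nabla f$, which $\overline H$ shares, the term $\hat\lambda M(\hat u^\rho(x),x)+\tfrac12\hat\lambda^2\abs{N(\hat u^\rho(x),x)}^2$ being $f$--free and the drift being merely shifted by the continuously differentiable, linearly growing field $\hat\lambda\sigma(x)N(\hat u^\rho(x),x)$. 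On the right, the cut--off construction of Lemma~\ref{le:approx} and Remark~\ref{re:inf}, applied with $\overline H$ in place of $H$ (the $f$--dependence is of the same form and the $f$--free terms pass through untouched), gives $\inf_{f\in\mathcal A_\kappa}\int\overline H\,\nu(dx)=\inf_{f\in\mathbb C^2\cap\mathbb C^1_\ell}\int\overline H\,\nu(dx)=\inf_{f\in\mathbb C^2_0}\int\overline H\,\nu(dx)$ for every $\nu\in\mathcal P$. Chaining the two identities shows that the three quantities in the statement all equal $\sup_{\nu\in\mathcal P}\inf_{f\in\mathbb C^2_0}\int_{\R^l}\overline H(x;\hat\lambda,f,\hat u^\rho)\,\nu(dx)$.

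I expect the main obstacle to be the verification, for the minimax theorem, of the lower semicontinuity in $\nu$ of $-\int_{\R^l}\overline H(x;\hat\lambda,f,\hat u^\rho)\,\nu(dx)$: unlike its counterpart in Lemma~\ref{le:saddle_3}, the integrand $\overline H(\cdot;\hat\lambda,f,\hat u^\rho)$ is only Borel, jumping across the sphere $\{\abs{x}=\rho\}$ where $\hat u^\rho$ is discontinuous. This is circumvented by observing that, by the $\overline H$--version of Lemma~\ref{le:density_differ}, any $\nu$ without a density in $\hat{\mathbb P}$ makes $\inf_{f\in\mathbb C^2_0}\int\overline H\,\nu(dx)=-\infty$, so the relevant supremum over $\nu$ is attained on measures with $\hat{\mathbb P}$--densities; such a $\nu$ assigns no mass to the Lebesgue--null sphere $\{\abs{x}=\rho\}$, the discontinuity of $\overline H(\cdot;\hat\lambda,f,\hat u^\rho)$ is therefore $\nu$--negligible, and $\nu\mapsto\int\overline H(x;\hat\lambda,f,\hat u^\rho)\,\nu(dx)$ is upper semicontinuous along such measures by the Portmanteau theorem, while the bound above by the continuous majorant $\overline H(x;\hat\lambda,f,\hat u(x))\vee\overline H(x;\hat\lambda,f,0)$ — still of the form $K_1-K_2\abs{x}^2$ when $f=f_\kappa$ — takes care of tightness uniformly. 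The remaining transfers — that the elliptic--regularity construction of Lemma~\ref{le:minmax} and the cut--off argument of Lemma~\ref{le:approx} survive replacing $\theta$ by $\theta+\hat\lambda\sigma N(\hat u^\rho,\cdot)$ and adding the bounded, $f$--free term — are routine.
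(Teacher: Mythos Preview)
Your approach is the paper's approach: establish the convex--concave structure, supply $\sup$--compactness in $\nu$ at $f_\kappa$ via the quadratic bound (exactly as you do, and as the paper says ``in analogy with the proof of Lemma~\ref{le:sup-comp}''), invoke Aubin--Ekeland, and close the sandwich using the point--mass identification $\sup_{\nu\in\mathcal P}\int g\,d\nu=\sup_x g(x)$ together with the $\inf$--class reductions. The paper compresses all of this into a single chain of inequalities; you have written out what is left implicit there.

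Your flag about the discontinuity of $\hat u^\rho$ across $\{\abs{x}=\rho\}$ is well taken --- the paper simply asserts upper semicontinuity in $\nu$ without comment --- but your proposed resolution does not quite close the gap where it matters. Aubin--Ekeland's theorem needs upper semicontinuity of $\nu\mapsto\int\overline H(x;\hat\lambda,f,\hat u^\rho)\,\nu(dx)$ on \emph{all} of $\mathcal P$, for each fixed $f$; your Portmanteau argument works only at $\nu$ not charging the sphere, and the Lemma~\ref{le:density_differ} restriction to $\hat{\mathbb P}$--densities kicks in only \emph{after} one has taken $\inf_f$, which is too late for verifying the minimax hypotheses. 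The cleanest repair is to replace the hard indicator in the definition of $\hat u^\rho$ by a continuous cutoff, say $\hat u^\rho(x)=\hat u(x)\,\eta(\abs{x}/\rho)$ with $\eta$ as in the proof of Lemma~\ref{le:approx}: then $\overline H(\cdot;\hat\lambda,f,\hat u^\rho)$ is continuous, upper semicontinuity in $\nu$ follows by Fatou exactly as in Lemma~\ref{le:sup-comp}, and the downstream uses of $\hat u^\rho$ --- Lemma~\ref{le:condition}, \eqref{eq:70}, \eqref{eq:97}, \eqref{eq:9} --- are unaffected, since they require only that $\hat u^\rho$ be bounded and coincide with $\hat u$ on the ball of radius $\rho$.
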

\begin{proof}
For $\kappa>0$ small enough, the function 
$\int_{\R^l}\overline H(x;\hat\lambda,f,\hat u^\rho)\,\nu(dx)$
is convex in $f\in\mathcal{A}_\kappa$\,, is concave and
 upper semicontinuous in $\nu\in\mathcal{P}$\,, and is
$\sup$--compact in $\nu$\,, the latter property being shown in analogy
with the proof of Lemma \ref{le:sup-comp}.
Invoking Theorem
7 on p.319 in Aubin and Ekeland \cite{AubEke84},
\begin{multline*}
  \inf_{f\in\mathbb C^2}
\sup_{x\in\R^l}
\overline H(x;\hat\lambda,f, \hat u^\rho)
=\inf_{f\in\mathbb C^2\cap \mathbb C^1_\ell}
  \sup_{\nu\in
    \mathcal{P}}\int_{\R^l}\overline H(x;\hat\lambda,f,\hat
  u^\rho)\,\nu(dx)\\
\le\inf_{f\in\mathcal{A}_\kappa}
  \sup_{\nu\in
    \mathcal{P}}\int_{\R^l}\overline H(x;\hat\lambda,f,\hat
  u^\rho)\,\nu(dx)
=    \sup_{\nu\in
    \mathcal{P}}\inf_{f\in\mathbb C^2\cap \mathbb C^1_\ell}
\int_{\R^l}\overline H(x;\hat\lambda,f,\hat u^\rho)\,\nu(dx)
\\
=    \sup_{\nu\in
    \mathcal{P}}\inf_{f\in\mathbb C^2}
\int_{\R^l}\overline H(x;\hat\lambda,f,\hat u^\rho)\,\nu(dx)
\le   \inf_{f\in\mathbb C^2}
\sup_{x\in\R^l}
\overline H(x;\hat\lambda,f, \hat u^\rho)
\,.
\end{multline*}

\end{proof}
\begin{remark}
  One can also show that, if $\kappa>0$ is small enough, then
  \begin{multline*}
      F(\lambda)=
  \sup_{\nu\in
    \mathcal{P}}\inf_{f\in\mathcal{A}_\kappa}
\int_{\R^l} H(x;\lambda,f)\,\nu(dx)=
  \inf_{f\in\mathcal{A}_\kappa}\sup_{\nu\in
    \mathcal{P}}
\int_{\R^l} H(x;\lambda,f)\,\nu(dx)\\=
\sup_{\nu\in\mathcal{P}}  \inf_{f\in\mathcal{A}_\kappa}
\int_{\R^l}\overline H(x;\hat\lambda,f,\hat u)\,\nu(dx)
\,.
  \end{multline*}

\end{remark}
\section{Proofs of the main results }
\label{sec:proof-bounds}

We prove  Theorem~\ref{the:bounds} by proving,
firstly, the upper bounds and, afterwards, the  lower bounds.

\subsection{The upper bounds}
\label{sec:upper-bounds}
This subsection contains the proofs of \eqref{eq:60} and \eqref{eq:9}. 
Let us note that, by (\ref{eq:5}),
\begin{multline}
  \label{eq:5a}
  L^\pi_t=
\int_0^1 M(\pi_{s}^t,X^t_s)\,ds
+\frac{1}{\sqrt{t}}\,\int_0^1 N(\pi_{s}^t,X_s^t)^T\,dW_{s}^t
\\=\int_0^1\int_{\R^l} M(\pi_{s}^t,x)\,\mu^t(ds,dx)
+\frac{1}{\sqrt{t}}\,\int_0^1 N(\pi_{s}^t,X_s^t)^T\,dW_{s}^t\,.
\end{multline}

\subsubsection{The proof of \eqref{eq:60}.}
\label{sec:proof}
By \eqref{eq:14} and It\^o's lemma, for 
$\mathbb C^2$--function $f$\,,
\begin{multline*}
  f(X_t)=f(X_0)+\int_0^t \nabla f(X_s)^T\theta(X_s)\,ds+
\frac{1}{2}\,\int_0^t \text{tr}\,\bl(\sigma(X_s)\sigma(X_s)^T\nabla^2
f(X_s)\br) \,ds\\
+\int_0^t \nabla f(X_s)^T\sigma(X_s)\,dW_s\,.
\end{multline*}
Since the process $\exp\bl(\int_0^t (\lambda 
N(\pi_s,X_s)+\nabla f(X_s)^T\sigma(X_s))\,dW_s
-(1/2)\int_0^t \abs{\lambda 
N(\pi_s,X_s)+\nabla f(X_s)\sigma(X_s)}^2\,ds\br)$ is a local
martingale, 
where   $\lambda\in\R$\,,
 by (\ref{eq:14}) and (\ref{eq:5a}), 
\begin{multline}\label{eq:1}
\mathbf{E}\exp\bl(t\lambda L^{\pi}_t+f(X_t)-f(X_0)-t
\int_0^1 \lambda M(\pi^t_s,X^t_s)\,ds-
t\int_0^1\nabla f(X^t_s)^T\,\theta(X^t_s)\,ds
\\-\frac{t}{2}\,\int_0^1\text{tr}\,({\sigma(X^t_s)}{\sigma(X^t_s)}^T\,
\nabla^2f(X^t_s))\,ds
-\frac{t}{2}\,\int_0^1\abs{\lambda 
N(\pi^t_s,X^t_s)+{\sigma(X^t_s)}^T\nabla f(X^t_s)}^2\,ds
\br)\le 1\,.
\end{multline}
Let  $\nu^t(dx)=\mu^t([0,1],dx)$\,. By
 \eqref{eq:40} and \eqref{eq:59}, for $\lambda\in[0,1)$\,,
 \begin{equation}
   \label{eq:110}
         \mathbf{E}\exp\bl(t\lambda L^{\pi}_t+
f(X_t)-f(X_0)-
t\int_{\R^l}H(x;\lambda, f)\,\nu^t(dx)\br)\le 1\,.
\end{equation}
Consequently,
\begin{equation*}
  \mathbf{E}\chi_{\{L^\pi_t\ge q\}}\exp\bl(t\lambda L^{\pi}_t
+f(X_t)-f(X_0)-
t\int_{\R^l}H(x;\lambda, f)\,\nu^t(dx)
\br)\le 1
\end{equation*}
Thus,\begin{equation*}
\ln
      \mathbf{E}\chi_{\{L^\pi_t\ge q\}}e^{f(X_t)-f(X_0)}\le
\sup_{\nu\in\mathcal{P}} \bl(-\lambda q t+
t\int_{\R^l}H(x;\lambda,f)\,\nu(dx)\br)
= -\lambda q t+
t\sup_{x\in\R^l}H(x;\lambda,f)\,.
\end{equation*}
By the reverse H\"older inequality, for arbitrary $\epsilon>0$\,,
\begin{equation*}
    \mathbf{E}\chi_{\{L^\pi_t\ge q\}}e^{f(X_t)-f(X_0)}\ge
\mathbf P(L^\pi_t\ge q)^{1+\epsilon}
\bl(\mathbf{E}e^{-
(f(X_t)-f(X_0))/\epsilon}\br)^{-\epsilon}\,,
\end{equation*}
so,
\begin{equation*}
    \frac{1+\epsilon}{t}\,\ln\mathbf P(L^\pi_t\ge q)\le
-\lambda q +
\sup_{x\in\R^l} H(x;\lambda,f)
+\frac{\epsilon}{t}\,\ln\mathbf{E}e^{-
(f(X_t)-f(X_0))/\epsilon}\,.
\end{equation*}
We may assume that $\inf_{f\in\mathbb C^2}
\sup_{x\in\R^l} H(x;\lambda,f)<\infty$\,. 
By Lemma \ref{le:minmax},  the latter infimum is
attained at  $ f^\lambda$\,. Since, by hypotheses,
 $ f^\lambda(x)\ge -C_1\abs{x}-C_2$ for some
positive $C_1$ and $C_2$ and $\abs{X_0}$ is bounded, we have that
\begin{equation*}
  \limsup_{t\to\infty}
    \frac{1+\epsilon}{t}\,\ln\mathbf P(L^\pi_t\ge q)\le
-\lambda q +
\inf_{f\in\mathbb C^2}\sup_{x\in\R^l} H(x;\lambda,f)
+\limsup_{t\to\infty}\frac{\epsilon}{t}\,\ln\mathbf{E}e^{
C_1\abs{X_t}/\epsilon}\,.
\end{equation*}
Consequently, by $\mathbf{E}e^{
C_1\abs{X_t}/\epsilon}$ being bounded in $t$ according to
 Lemma \ref{le:exp_moment} of the appendix  and by $\epsilon$ being
arbitrarily small,
\begin{equation*}
  \limsup_{t\to\infty}    \frac{1}{t}\,\ln\mathbf P(L^\pi_t\ge q)\le
-\bl(\lambda q -
\inf_{f\in\mathbb C^2}\sup_{x\in\R^l} 
H(x;\lambda,f)\br)
\end{equation*}
 yielding \eqref{eq:60}, if one recalls \eqref{eq:38}, \eqref{eq:29},
 and $F$ being convex so that the supremum in \eqref{eq:38} can be
 taken over $[0,1)$\,.

\subsubsection{The proof of \eqref{eq:9}}
\label{sec:proof-1}
Since $J^{\text{s}}_{q}=0$ when $\hat\lambda\ge0$\,,
   we may assume that
 $\hat \lambda<0$\,.
Letting $\pi^t_s=\hat u^\rho(X^t_s)$ in  \eqref{eq:1} yields,
for $f\in\mathbb C^2$\,, 
\begin{equation}
  \label{eq:70}
      \mathbf{E}\exp\bl(
 t\hat\lambda L^{\hat\pi^\rho}_t+f(X_t)- f(X_0)-
t\int_{\R^l}\overline H(x;\hat\lambda,f,\hat u^\rho)\,\nu^t(dx)
\br)\le 1\,.
\end{equation}
Therefore, on recalling that $\hat\lambda<0$\,,
\begin{multline}
  \label{eq:98}
\mathbf{E}\mathbf1_{\{L^{\hat\pi^\rho}\le q\}}\exp\bl(f(X_t)-
f(X_0)\br)\le
e^{-t\hat\lambda q}
        \mathbf{E}\exp\bl(
 t\hat\lambda L^{\hat\pi^\rho}_t+f(X_t)- f(X_0)\br)\\\le
e^{-t\hat\lambda q}\exp\bl(t\sup_{\nu\in\mathcal{P}}
\int_{\R^l}\overline H(x;\hat\lambda,f,\hat u^\rho)\,\nu(dx)\br)
\,.
\end{multline}
By the reverse H\"older inequality, for $\epsilon>0$\,,
\begin{equation}
  \label{eq:101}
  \mathbf{E}\mathbf1_{\{L^{\hat\pi^\rho}\le q\}}\exp\bl(
 f(X_t)- f(X_0)\br)
\ge   \mathbf{P}(L^{\hat\pi^\rho}\le q)^{1+\epsilon}
\mathbf E\exp\bl(e^{-(1/\epsilon)(f(X_t)- f(X_0))}\br)^{-\epsilon}\,.
\end{equation}
Assuming that $f\in\mathcal{A}_\kappa$\,, with $\kappa$ being small
enough as compared with $\epsilon$\,, we have, by \eqref{eq:77}, that
\begin{equation*}
  \limsup_{t\to\infty}\mathbf E\exp\bl(e^{-(1/\epsilon)(f(X_t)- f(X_0))}\br)^{1/t}\le1\,.
\end{equation*}
Therefore,
\begin{equation}
  \label{eq:44}
  \limsup_{t\to\infty}
\frac{1+\epsilon}{t}\,\ln \mathbf P(L^{\hat\pi^\rho}\le q)\le
-\hat\lambda q+\inf_{f\in\mathcal{A}_\kappa}\sup_{\nu\in\mathcal{P}}
\int_{\R^l}\overline H(x;\hat\lambda,f,\hat u^\rho)\,\nu(dx))\,.
\end{equation}
By Lemma \ref{le:saddle_2} and \eqref{eq:97},
\begin{equation*}
\limsup_{\rho\to\infty}    \limsup_{t\to\infty}
\frac{1}{t}\,\ln \mathbf P(L^{\hat\pi^\rho}\le q)\le
F(\hat\lambda)\,.
\end{equation*}

\subsection{The lower bounds}
\label{sec:lower-bounds}
In this subsection, we prove \eqref{eq:58} and \eqref{eq:27}.
Let us assume that  $\hat \lambda<\overline\lambda$\,. 
We prove that, if $q'>q$\,, then
\begin{subequations}
  \begin{align}
  \label{eq:48}
            \liminf_{t\to\infty}\frac{1}{t}\ln
\mathbf{P}(L^{\pi}_t< q')\ge -
\bl(\hat\lambda q-G(\hat\lambda,\hat f,\hat m)
 \br) \intertext{and that, if $q''<q$\,, then}
  \label{eq:39a}
            \liminf_{t\to\infty}\frac{1}{t}\ln
\mathbf{P}(L^{\hat\pi}_t> q'')\ge -
\bl(\hat\lambda q-
G(\hat\lambda,\hat f,\hat m)\br)\,.
\end{align}
\end{subequations}
We begin with showing that
\begin{equation}
  \label{eq:51}
  \hat \lambda q
-G(\hat\lambda,\hat f,\hat m)
=\frac{1}{2}\,\int_{\R^l}\abs{\hat\lambda
N(\hat u(x),x)+
\sigma(x)^T\nabla  \hat f(x)
}^2\hat m(x)\,dx\,.
\end{equation}
Since $(\hat\lambda,\hat f,\hat m)$ is a saddle point of 
$\lambda q-\breve G(\lambda,\nabla f,  m)$ in
$(-\infty,\overline\lambda]\times (\mathbb C^2\cap \mathbb C^1_\ell)
\times \mathbb P$
 by Lemma \ref{le:saddle_3},
  $\hat\lambda$ is the point of the maximum of the concave   function
$\lambda q-\breve G(\lambda,\nabla\hat f, \hat m)$
on $(-\infty,\overline\lambda]$\,. Since
$\hat\lambda<\overline\lambda$ and $\breve G(\lambda,\nabla\hat f,
\hat m)$ is differentiable on $(-\infty,\overline\lambda)$\,, the $\lambda$--derivative of 
$\breve G(\lambda,
\nabla  \hat f,\hat m)$ at $\hat\lambda$ equals zero.
By \eqref{eq:10a} of Lemma \ref{le:conc},
\begin{equation}
  \label{eq:132}
  \frac{d}{d\lambda}\,\breve G(\lambda,
\nabla  \hat f,\hat m)\Big|_{\lambda=\hat\lambda}=
    \int_{\R^l}
\bl(  M(\hat u(x),x)
+\hat\lambda\abs{N(\hat u(x),x)}^2+
\nabla \hat f(x)^T\sigma(x) N(\hat u(x),x)\br)
\hat m(x)\,dx\,,
\end{equation}
so,\begin{equation}
  \label{eq:131}
    \int_{\R^l}
\bl(  M(\hat u(x),x)
+\hat\lambda\abs{N(\hat u(x),x)}^2+
\nabla \hat f(x)^T\sigma(x) N(\hat u(x),x)\br)
\hat m(x)\,dx= q\,.
\end{equation}
Therefore, by \eqref{eq:40}, \eqref{eq:59}, and \eqref{eq:62},
\begin{multline}
  \label{eq:30'}
    \hat \lambda q-G(\hat\lambda,\hat f,\hat m)
=\hat\lambda\int_{\R^l}
\bl(  M(\hat u(x),x)
+\hat\lambda\abs{N(\hat u(x),x)}^2+
\nabla \hat f(x)^T\sigma(x) N(\hat u(x),x)
\br)
\hat m(x)\,dx\\
-\int_{\R^l}
\bl( \hat\lambda M(\hat u(x),x)
+\frac{1}{2}\,\hat\lambda^2\abs{N(\hat u(x),x)}^2+\hat\lambda\,
\nabla \hat f(x)^T\sigma(x) N(\hat u(x),x)
\\
+\frac{1}{2}\,\abs{\sigma(x)^T\nabla \hat f(x)}^2+\nabla \hat
f(x)^T\theta(x)
+\frac{1}{2}\,\text{tr}\,({\sigma(x)}{\sigma(x)}^T\nabla^2 \hat f(x)\,
)\br)
\hat m(x)\,dx\\
=\int_{\R^l}
\frac{1}{2}\,\hat\lambda^2\abs{N(\hat u(x),x)}^2
\hat m(x)\,dx
-\int_{\R^l}\bl(\frac{1}{2}\,\abs{\sigma(x)^T\nabla \hat f(x)}^2+\nabla \hat
f(x)^T\theta(x)
\\+\frac{1}{2}\,\text{tr}\,({\sigma(x)}{\sigma(x)}^T\nabla^2 \hat f(x)\,
)\br)
\hat m(x)\,dx\,.
\end{multline}

Integration by parts in 
\eqref{eq:104'}
combined with 
the facts that $\abs{\nabla \hat f(x)}$ grows at most linearly
 with $\abs{x}$\,, that $\hat u(x)$ is a linear function of $\nabla
\hat f(x)$ by \eqref{eq:69}, 
that $\int_{\R^l}\abs{x}^2\,\hat m(x)\,dx<\infty$\,, and that
$\int_{\R^l}\abs{\nabla \hat m(x)}^2/\hat m(x)\,dx<\infty$\,,
shows that \eqref{eq:104'} holds with $\hat f(x)$ as $h(x)$\,.
Substitution on the rightmost side of \eqref{eq:30'} yields \eqref{eq:51}.

 Let
$\hat W^{t}_s$ for $s\in[0,1]$  and measure $\hat{\mathbf{P}}^{t}$ 
be defined by the respective equations

\begin{equation}
  \label{eq:34'}
\hat W^{t}_s=  W^t_s-\sqrt{t}\int_0^s(
\hat\lambda N(\hat u(X^t_{\hat
s}),X^t_{\hat s})
+\sigma (X^t_{\hat s})^T\nabla  \hat f(X^t_{\tilde s}) )\,d\tilde s
\end{equation}
and 
\begin{multline}
  \label{eq:35'}
  \frac{d\hat{\mathbf{P}}^{t}}{d\mathbf{P}}=
\exp\bl(\sqrt{t}\,\int_0^1
(\hat\lambda N(\hat u(X^t_s),X^t_s)+\sigma(X^t_s)^T\nabla
\hat f(X^t_s)
)^T\, 
d W^t_s\\-
\frac{t}{2}\,\int_0^1\abs{
\hat\lambda N
(\hat u(X^t_s),X^t_s)+{\sigma(X^t_s)}^T\nabla \hat f(X^t_s)}^2\,ds\br)\,.
\end{multline}
A multidimensional extension of
  Theorem 4.7 on p.137 in
 Liptser and Shiryayev \cite{LipShi77}, which is proved similarly, obtains  that, given $t>0$\,,
 there exists $\gamma'>0$ such that
$\sup_{s\le t}
\mathbf Ee^{\gamma'\abs{X_s}^2}<\infty$\,.  By Example 3 on pp.220,221 in
 Liptser and Shiryayev \cite{LipShi77}
 and the linear growth condition 
on $\nabla \hat{f}(x)$\,, the expectation of the righthand side of
\eqref{eq:35'} with respect to $\mathbf P$ equals unity.
 Therefore, $\hat{\mathbf{P}}^{t}$
 is a valid 
probability measure and 
the process $(\hat W^{t}_s,\,s\in[0,1])$ is a standard Wiener process
under $\hat{\mathbf{P}}^{t}$\,, see Lemma 6.4 on p.216 in
Liptser and Shiryayev \cite{LipShi77} and Theorem 5.1 on p.191 in
Karatzas and Shreve \cite{KarShr88}.

By \eqref{eq:8} and  \eqref{eq:69},
\begin{equation*}
  a(x)- r(x)\ind+b(x)
    (\hat\lambda 
N(   \hat u(x),x)+
\sigma(x)^T\nabla  \hat f(x))=c(x)\hat u(x)\,.
\end{equation*}
It follows that
\begin{multline}
  \label{eq:6}
L^\pi_t=    \int_0^1M(\pi^t_s,X^t_s)\,ds+\frac{1}{\sqrt{t}}\,\int_0^1 
N(\pi^t_s,X^t_s)^T\,d W^t_s=
\int_0^1M(\pi^t_s,X^t_s)\,ds\\+
\int_0^1N(\pi^t_s,X^t_s)^T (
\hat\lambda N(\hat u(X^t_s
),X^t_s)
+\sigma (X^t_s)^T\nabla  \hat f(X^t_s) )\,ds+
\frac{1}{\sqrt{t}}\,\int_0^1 
N(\pi^t_s,X^t_s)^T\,d \hat W^{t}_s\\=\frac{1}{t}\,\ln\mathcal{E}_1^t
+
\int_0^1M(\hat u(X^t_s),X^t_s)\,ds+
\int_0^1N(\hat u(X^t_s),X^t_s)^T (
\hat\lambda N(\hat u(X^t_s
),X^t_s)
+\sigma (X^t_s)^T\nabla  \hat f(X^t_s) )\,ds\\+
\frac{1}{\sqrt{t}}\,\int_0^1 
N(\hat u(X^t_s),X^t_s)^T\,d \hat W^{t}_s\,,
\end{multline}
where
 $\mathcal{E}_s^t$ represents the stochastic exponential defined by
\begin{equation*}
  \mathcal{E}_s^t=\exp\bl(
\sqrt{t}\,\int_0^s(\pi^t_{\tilde s}-\hat u(X^t_{\tilde
  s}))^Tb(X^t_{\tilde s})
\,d\hat W^{t}_{\tilde s}\\-
\frac{t}{2}\,\int_0^s
\norm{\pi^t_{\tilde s}-\hat u(X^t_{\tilde s})}_{c(X^t_{\tilde s})}^2 d\tilde s\br)\,.
\end{equation*}
By  \eqref{eq:35'} and \eqref{eq:6}, for $\delta>0$\,,
\begin{multline}
  \label{eq:17}
    \mathbf{P}\bl(L^{\pi}_t< q+3\delta\br)=
\hat{\mathbf{E}}^{t}\chi_{
\displaystyle\{
\int_0^1
M(\pi_{s}^t,X^t_s)\,ds
+\frac{1}{\sqrt{t}}\,\int_0^1 N(\pi_{s}^t,X_s^t)^T\,dW_s^t< q+3\delta
\}}\\
\exp\bl(-\sqrt{t}\int_0^1(\hat\lambda
 N(\hat u(X^t_s),X^t_s)+\sigma( X^t_s)^T\nabla  \hat f(X^t_s)
)^T 
\,d\hat W^{t}_s\\
-\frac{t}{2}\,
\int_0^1\abs{\hat\lambda
 N(\hat u(X^t_s),X^t_s)+\sigma(X^t_s)^T\nabla  \hat f(X^t_s)}^2\,ds\br)\\\ge
\hat{\mathbf{E}}^{t}\chi_{\Big\{\displaystyle
\frac{1}{t}\ln\mathcal{E}^t_1<\delta\Big\}}\,
\chi_{\Big\{\displaystyle
\frac{1}{\sqrt{t}}\,\abs{\int_0^1 
N(\hat u(X^t_s),X^t_s)^T\,d \hat W^{t}_s}<\delta\Big\}}
\chi_{\Big\{\displaystyle
\int_{\R^l}M(\hat u(x),x)\,\nu^t(dx)}\\{+
\int_{\R^l}N(\hat u(x),x)^T (
\hat\lambda N(\hat u(x),x)
+\sigma (x)^T\nabla  \hat f(x) )\,\nu^t(dx)< q+\delta\Big\}}
\\
\chi_{\Big\{\displaystyle
\frac{1}{\sqrt t}\,\abs{\int_0^1(
\hat\lambda N(\hat u(X^t_s),X^t_s)+\sigma(X^t_s)^T\nabla  \hat f(X^t_s)
 )^T\,d\hat W^{t}_s}
< \delta\Big\}}\\
\chi_{\Big\{\displaystyle
\int_{\R^L}\abs{\hat\lambda N(\hat u(x),x)
+\sigma(x)^T\nabla 
  \hat f(x)}^2\,\nu^t(dx)-
\int_{\R^l}\abs{\hat\lambda N(\hat u(x),x)
+\sigma(x)^T\nabla  \hat f(x)}^2
\hat m(x)\,dx<2\delta\Big\}}
\\\exp\bl(-2\delta t
-\frac{t}{2}\,\int_{\R^l}\abs{\hat\lambda N(\hat u(x),x)+
\sigma(x)^T\nabla  \hat f(x)
}^2\hat m(x)\,dx\br)\,.
\end{multline}
We will work with the terms on the righthand side in order.
Since $
\hat{\mathbf E}^{t}\mathcal{E}_1^t\le 1$\,, Markov's inequality yields
the convergence
\begin{equation}
  \label{eq:38'}
  \lim_{t\to\infty}
\hat{\mathbf P}^{t}\bl(  \frac{1}{t}\ln\mathcal{E}^t_1<\delta\br)=1\,.
\end{equation}

By \eqref{eq:14} and \eqref{eq:34'},
\begin{equation*}
        dX^t_s=t\,\theta(X^t_s)\,ds+t\,
\sigma(X^t_s)\,\bl( \hat\lambda N(\hat u(X^t_s),X^t_s)+
\sigma(X^t_s)^T\nabla  \hat f(X^t_{ s})
 \br)
\,ds+\sqrt{t}\sigma(X^t_s) d\hat W^{t}_s\,.
\end{equation*}
Hence, the process $X=(X_s\,,s\ge0)=(X^t_{s/t}\,,s\ge0)$ satisfies the
equation
\begin{equation*}
        d X_s=
\theta(X_s)\,ds+
\sigma( X_s)\,\bl(\hat\lambda N(\hat u( X_s), X_s)+
\sigma( X_s)^T\nabla  \hat f( X_{ s})
 \br)
\,ds
+\sigma(X_s) d\tilde W^t_s\,,
\end{equation*}
$(\tilde W_s^t)$ being a standard Wiener process under $\hat{\mathbf P}^{t}$\,.
We note that by Theorem 10.1.3 on p.251 in Stroock and Varadhan
\cite{StrVar79} the distribution of $X$ under $\hat{\mathbf P}^{t}$ is
specified uniquely. In particular, it does not depend on $t$\,.

We show that if $g(x)$ is a continuous function such that
 $\abs{g(x)}\le K(1+\abs{x}^2)$\,, for all $x\in\R^l$ and some $K>0$\,, then
\begin{equation}
  \label{eq:48'}
  \lim_{t\to\infty}
\hat{\mathbf P}^{t}\Bl(\abs{\int_{\R^l}g(x)
\nu^t(dx)-\int_{\R^l}g(x)\hat m(x)\,dx}>\epsilon\Br)=0\,.
\end{equation}
 Since $\hat m(x)$ is a unique solution to
 \eqref{eq:104'},
by Theorem 1.7.5 
in Bogachev, Krylov, and
R\"ockner \cite{BogKryRoc}, 
  $\hat m(x)\,dx$ is a unique  invariant measure of $X$
under $\hat{\mathbf P}^{t}$\,, see also Proposition 9.2 on p.239 in 
Ethier and Kurtz \cite{EthKur86}.
 It is thus an
ergodic measure.
We recall that $\hat m\in\mathbb{\hat P}$\,, so
$\int_{\R^l}\abs{x}^2\hat m(x)\,dx<\infty$\,. Let $P^\ast$ denote the
probability measure on the space $\mathbb C(\R_+,\R^l)$ of continuous
$\R^l$--valued functions equipped with the locally uniform topology
that is defined by 
$P^\ast(B)=\int_{\R^l}P_x(B)\,\hat m(x)\,dx$\,, where $P_x$ is the
distribution in $\mathbb C(\R_+,\R^l)$ of process $X$ started at
$x$\,.
Since $\hat m(x)\,dx$ is ergodic, so is $P^\ast$, see
Corollary on  p.12 in 
Skorokhod \cite{Sko89}. Hence,
 $P^\ast$--a.s.,
 \begin{equation}
   \label{eq:26}
     \lim_{s\to\infty}\frac{1}{s}\,\int_0^s g(\tilde X_{\tilde s})\,d\tilde s=
\int_{\R^l}g(x)\hat m(x)\,dx\,,
\end{equation} 
see, e.g., Theorem 3 on p.9 in 
Skorokhod \cite{Sko89}, with $\tilde X$ representing a generic element of 
$\mathbb C(\R_+,\R^l)$\,. Let $\mathcal C$ denote the complement of 
the set of elements of $\mathbb C(\R_+,\R^l)$ such that
 \eqref{eq:26} holds. 
By Proposition 1.2.18 in
Bogachev, Krylov, and
R\"ockner \cite{BogKryRoc}, $\hat m(x)$ is  continuous and strictly positive.
Since $P^\ast(\mathcal C)=0$\,, we have that
 $P_x(\mathcal C)=0$ for almost all $x\in\R^l$ with respect to 
Lebesgue measure. It follows that if $X_0$ has an absolutely
continuous distribution $n(x)\,dx$\,, then 
$\int_{\R^l}P_x(\mathcal C)n(x)\,dx=0$\,, which means that \eqref{eq:26} holds
a.s. w.r.t. $\hat{\mathbf P}$\,, the latter symbol denoting the
distribution of $X$ on the space of trajectories.
  If the distribution of $X_0$ is not absolutely continuous,
then the distribution of  $X_1$ is because the transition probability has a
density, see pp. 220--226 in Stroock and Varadhan \cite{StrVar79}.
Hence, \eqref{eq:26} holds $\hat{\mathbf P}$--a.s. for that case too.
We have proved \eqref{eq:48'}.

By  \eqref{eq:69},
  the linear growth condition on $\nabla \hat f(x)$\,, and \eqref{eq:48'},
\begin{multline}
  \label{eq:67}
    \lim_{t\to\infty}\hat{\mathbf P}^{t}
\bl(\big|\int_{\R^l}\abs{\hat\lambda N(\hat u(x),x)+
\sigma(x)^T\nabla 
   \hat f(x)}^2\,\nu^t(dx)\\-
\int_{\R^l}\abs{\hat\lambda N(\hat u(x),x)+\sigma(x)^T\nabla  \hat f(x)}^2
m(x)\,dx\big|<2\delta\br)=1\,.
\end{multline}
Since,  for $\eta>0$\,, by the L\'englart--Rebolledo inequality, see
Theorem 3 on p.66 in
Liptser and Shiryayev \cite{lipshir}, 
\begin{multline*}
 \hat{\mathbf{P}}^{t}\bl(\abs{\frac{1}{\sqrt{t}}\,
\int_0^1(\hat\lambda N(\hat u(X^t_s),X^t_s)+
\sigma(x)^T\nabla  \hat f(X^t_s) )\,d\hat W^{t}_s}\ge
\delta\br)
\\\le 
\frac{\eta
}{\delta^2 }+\hat{\mathbf{P}}^{t}\bl(
\int_0^1\abs{
\hat\lambda N(\hat u(X^t_s),X^t_s)+
\sigma(x)^T\nabla  \hat f(X^t_s)
}^2\,ds\ge\eta t\br)\,,
\end{multline*}
 we conclude that
 \begin{equation}
   \label{eq:33}
   \lim_{t\to\infty}
\hat{\mathbf{P}}^{t}\bl(\frac{1}{\sqrt{t}}\,
\abs{\int_0^1(
\hat\lambda N(\hat u(X^t_s),X^t_s)+
\sigma(X^t_s)^T\nabla  \hat f(X^t_s)
 )\,d\hat W^{t}_s}< \delta\br)
=1\,. 
\end{equation}
Similarly,
\begin{equation}
  \label{eq:35}
  \lim_{t\to\infty}
\hat{\mathbf{P}}^{t}\bl(
\frac{1}{\sqrt{t}}\,\abs{\int_0^1 
N(\hat u(X^t_s),X^t_s)^T\,d \hat W^{t}_s}
<\delta\br)=1\,.
\end{equation}

By  \eqref{eq:131} 
and \eqref{eq:48'},
\begin{equation*}
  \lim_{t\to\infty}\hat{\mathbf P}^{t}\bl(
\int_{\R^l}\bl(M(\hat u(x),x)+N(\hat u(x),x)^T (
\hat\lambda N(\hat u(x
),x)
+\sigma (x)^T\nabla  \hat f(x) )\br)\,\nu^t(dx)< q+\delta\br)=1\,.
\end{equation*}
  Recalling  \eqref{eq:38'} and \eqref{eq:17}  obtains that
  \begin{equation}
    \label{eq:124}
       \liminf_{t\to\infty}
\frac{1}{t}\,\ln     \mathbf{P}\bl(L^{\pi}_t< q'\br)\ge
-\frac{1}{2}\,\int_{\R^l}\abs{\hat\lambda N(\hat u(x),x)+
\sigma(x)^T\nabla  \hat f(x)
}^2m(x)\,dx\,,
  \end{equation}
so, \eqref{eq:48} follows from
  \eqref{eq:51}.

In order to prove \eqref{eq:39a},
we note that if $\pi^t_s=\hat u(X^t_s)$\,, then 
 $\mathcal{E}^t_s=0$ in \eqref{eq:6}, so 
\begin{multline*}
      \int_0^1M(\hat u(X^t_s),X^t_s)\,ds+\frac{1}{\sqrt{t}}\,\int_0^1 
N(\hat u(X^t_s),X^t_s)^T\,d W^t_s=
\int_0^1M(\hat u(X^t_s),X^t_s)\,ds\\+
\int_0^1N(\hat u(X^t_s),X^t_s)^T (
\hat\lambda N(\hat u(X^t_s
),X^t_s)
+\sigma (X^t_s)^T\nabla  \hat f(X^t_s) )\,ds+
\frac{1}{\sqrt{t}}\,\int_0^1 
N(\hat u(X^t_s),X^t_s)^T\,d \hat W^{t}_s\,.
\end{multline*}
On recalling \eqref{eq:5a}, similarly to \eqref{eq:17},
\begin{multline}
  \label{eq:56}
    \mathbf{P}\bl(L^{\hat\pi}_t> q-2\delta\br)=
\hat{\mathbf{E}}^{t}\chi_{
\displaystyle\Big\{
\int_0^1\Bl(
M(\hat u(X^t_s),X^t_s)
+N(\hat u(X^t_s),X^t_s)^T \bl(
\hat\lambda N(\hat u(X^t_s
),X^t_s)}\\{
+\sigma (X^t_s)^T\nabla  \hat f(X^t_s)\br) \Br)\,ds+
\frac{1}{\sqrt{t}}\,\int_0^1 
N(\hat u(X^t_s),X^t_s)^T\,d \hat W^{t}_s>q-2\delta}\Big\}\\
\exp\bl(-\sqrt{t}\int_0^1(\hat\lambda
 N(\hat u(X^t_s),X^t_s)+\sigma( X^t_s)^T\nabla  \hat f(X^t_s)
)^T 
\,d\hat W^{t}_s
+\frac{t}{2}\,
\int_0^1\abs{\hat\lambda
 N(\hat u(X^t_s),X^t_s)+\sigma(X^t_s)^T\nabla  \hat f(X^t_s)}^2\,ds\br)\\\ge
\chi_{\Big\{\displaystyle
\frac{1}{\sqrt{t}}\,\int_0^1 
N(\hat u(X^t_s),X^t_s)^T\,d \hat W^{t}_s>-\delta\Big\}}
\chi_{\Big\{\displaystyle\int_{\R^l}
\Bl(M(\hat u(x),x)
}\\{+N(\hat u(x),x)^T \bl(
\hat\lambda N(\hat u(x
),x)
+\sigma (x)^T\nabla  \hat f(x) \br)\Br)\,\nu^t(dx)\ge q-\delta\Big\}}
\\\chi_{\Big\{\displaystyle
\frac{1}{\sqrt t}\,\int_0^1(
\hat\lambda N(\hat u(X^t_s),X^t_s)+\sigma(X^t_s)^T\nabla  \hat f(X^t_s)
 )^T\,d\hat W^{t}_s
\ge -\delta\Big\}}\\
\chi_{\Big\{\displaystyle
\int_{\R^l}\abs{\hat\lambda N(\hat u(x),x)
+\sigma(x)^T\nabla 
  \hat f(x)}^2\,\nu^t(dx)-
\int_{\R^l}\abs{\hat\lambda N(\hat u(x),x)
+\sigma(x)^T\nabla  \hat f(x)}^2
 \hat m(x)\,dx\le2\delta\Big\}}
\\\exp\bl(-2\delta t
-\frac{t}{2}\,\int_{\R^l}\abs{\hat\lambda N(\hat u(x),x)+
\sigma(x)^T\nabla  \hat f(x)
}^2 \hat m(x)\,dx\br)\,.
\end{multline}

One still has \eqref{eq:67}, \eqref{eq:33}, and \eqref{eq:35}.
By  \eqref{eq:131} 
and \eqref{eq:48'},
\begin{equation*}
  \lim_{t\to\infty}\hat{\mathbf P}^{t}\bl(
\int_{\R^l}\bl(M(\hat u(x),x)+N(\hat u(x),x)^T (
\hat\lambda N(\hat u(x
),x)
+\sigma (x)^T\nabla  \hat f(x) )\br)\,\nu^t(dx)> q-\delta\br)=1\,.
\end{equation*}
  Recalling  \eqref{eq:56}  yields
  \begin{equation}
    \label{eq:128}
       \liminf_{t\to\infty}
\frac{1}{t}\,\ln     \mathbf{P}\bl(L^{\hat\pi}_t> q''\br)\ge
-\frac{1}{2}\,\int_{\R^l}\abs{\hat\lambda N(\hat u(x),x)+
\sigma(x)^T\nabla  \hat f(x)
}^2\hat m(x)\,dx\,,
  \end{equation}
so, \eqref{eq:39a} follows from
  \eqref{eq:51}.

Reversing the roles of $q$ and $q'$ in  \eqref{eq:48}
 and reversing the
roles of $q$ and $q''$ in \eqref{eq:39a} 
 obtain that, if $q'<q$\,, then
\begin{align*}
                \liminf_{t\to\infty}\frac{1}{t}\ln
\mathbf{P}(L^{\pi}_t< q)\ge -J^{\text{s}}_{q'}
 \intertext{and that, if $q''>q$\,, then}
            \liminf_{t\to\infty}\frac{1}{t}\ln
\mathbf{P}(L^{\hat\pi}_t> q)\ge -J^{\text{o}}_{q''}\,.
\end{align*}
Letting $q'\to q$ and $q''\to q$ and using the continuity of
$J_q^{\text{s}}$ and $J_q^{\text{o}}$\,, respectively, which
properties hold by
Lemma \ref{le:conc},  prove
   \eqref{eq:58} 
and \eqref{eq:27}, respectively, provided
 $\hat\lambda<\overline \lambda$\,.

 Suppose that $\hat\lambda=\overline\lambda<1$\,. 
Let $\hat f=f^{\hat\lambda}$ be as in  Lemma \ref{le:minmax}.
Then \eqref{eq:124} and
 \eqref{eq:128} hold by a similar argument to the one above.
Since $\overline\lambda$ maximises $\lambda q-\breve G(\lambda,\hat f,\hat m)$
over $\lambda$  we have that
$  (d/d\lambda)\,\breve G(\lambda,\hat f,\hat
  m)\vert_{\overline\lambda-}\le q\,.
$
By \eqref{eq:132} still holding, 
we have that in \eqref{eq:131}
the $=$ sign has to be
replaced with $\le$\,. By $\overline\lambda$ being positive, the
first $=$ sign in \eqref{eq:30'} needs to be replaced with
$\ge$\,, so does the $=$ sign in \eqref{eq:51}.  By
\eqref{eq:124} and \eqref{eq:128},
 one obtains \eqref{eq:58} and \eqref{eq:27}, respectively.

Suppose that $\hat\lambda=\overline\lambda=1$\,.
Since $\hat\lambda>0$\,, so, $J^{\text{s}}_q=0$ and
$J^{\text{o}}_q>0$\,, \eqref{eq:58} is a consequence of \eqref{eq:60}.
We now work toward \eqref{eq:27}.
Since $1$ maximises $\lambda q-\breve F(\lambda,\hat m)$
over $\lambda$  and the
function $\breve F(\lambda,\hat m)$ 
is a convex function of $\lambda$\,,
 $\breve F(1,\hat m)<\infty $ and
$
d/d\lambda\,\breve F(\lambda,\hat m)\big|_{1-}\le q\,.$
Let $\nabla\hat f$ be defined as in part 3 of Lemma \ref{le:saddle_3},
i.e., let $\inf_{\nabla f\in \mathbb
  L^{1,2}_0(\R^l,\R^l,\hat m(x)\,dx)}\breve G(1,\nabla f,\hat m)$ be
attained  at $\nabla\hat f$\,.
By \eqref{eq:163} of Lemma \ref{le:conc}, 
$  d/d\lambda\,
\breve G(\lambda,\nabla\hat f,\hat m)\big|_{1-}\le q\,.$
By part 3 of Lemma \ref{le:saddle_3},
$\breve G(1,\nabla\hat f,\hat m)$ being finite 
 implies that, $\hat m(x)\,dx$--a.e.,
 \begin{equation}
   \label{eq:25}
b(x)\sigma(x)^T  \nabla  \hat f(x)=
 b(x)\beta(x)-a(x)+r(x)\mathbf 1\,.
 \end{equation}
By \eqref{eq:10a} of Lemma \ref{le:conc},  if $\lambda<1$\,, then
  \begin{equation*}
       \frac{d\breve G(\lambda,
\nabla   \hat f,\hat m)}{d\lambda}\,    =
 \int_{\R^l}\bl( M( u^{\lambda,\nabla \hat f}(x),x)
+\lambda
\abs{N(u^{\lambda,\nabla\hat f}(x),x)}^2+ 
N(u^{\lambda,\nabla \hat f}(x)^T\sigma(x)^T\nabla\hat f(x),x)\br)
\hat m(x)\,dx\,,
  \end{equation*}
where $ u^{\lambda,\nabla\hat f}(x)$ is defined by \eqref{eq:39} with 
$\nabla\hat f(x)$
as $p$\,.
On noting that by \eqref{eq:25} the  limit, as $\lambda\uparrow 1$\,, in
\eqref{eq:39} with $\nabla\hat f(x)$ as $p$ 
  equals  $c(x)^{-1}b(x)\beta$\,,
we have, see Theorem 24.1 on p.227 in Rockafellar \cite{Rock} for the
first equality below, that
\begin{multline*}
\frac{d}{d\lambda}
\,\breve G(\lambda,\nabla\hat f,\hat m)\big|_{1-}=
\lim_{\lambda\uparrow 1}\frac{d}{d\lambda}
\,\breve G(\lambda,\nabla\hat f,\hat m)
     =\int_{\R^l}\bl(  M( c(x)^{-1}b(x)\beta(x),x)\\
+\abs{N(c(x)^{-1}b(x)\beta(x),x)}^2
+ N(c(x)^{-1}b(x)\beta(x),x)^T{\sigma(x)}^T\nabla
 \hat f(x)\br)\,\hat m(x)\,dx\,.
\end{multline*}
We recall that
 $\hat v(x)$ is defined to be a bounded continuous function
with values in
 the range
of $b(x)^T$ such that
$\abs{\hat v(x)}^2/2=q-
d/d\lambda\,\breve F(\lambda,\hat m)\Big|_{1-}$
and
$\hat u(x)=
c(x)^{-1}b(x)(\beta(x)+\hat v(x))$\,.
By Lemma \ref{le:conc},
   $d/d\lambda\,\breve F(\lambda,\hat m)\big|_{1-}=d/d\lambda\,
\breve G(\lambda,\nabla \hat f,\hat m)\big|_{1-}$\,.
Since 
the vectors $b(x)^T c(x)^{-1}b(x)\beta(x)-\beta(x) $ and $b(x)^Tc(x)^{-1}b(x)
\hat v(x)$ 
are orthogonal, with
 the former  being in the null space of $b(x)$ and the latter
 being in the range of $b(x)^T$\,, substitution in
 \eqref{eq:4} and \eqref{eq:8} with the account of \eqref{eq:135} yields
\begin{multline}
  \label{eq:89}
    \int_{\R^l}
\bl(   M(\hat u(x),x)
+\abs{N(\hat u(x),x)}^2
+ N(\hat u(x),x)^T{\sigma(x)}^T\nabla \hat f(x)
\br)\,\hat m(x)\,dx\\
=\frac{d}{d\lambda}
\,\breve G(\lambda,\nabla\hat f,\hat m)\big|_{1-}
+\int_{\R^l}\frac{\abs{\hat v(x)}^2}{2}\,\hat m(x)\,dx=q\,.
\end{multline}
(As a consequence,  \eqref{eq:131} holds in this case too.)

We now invoke results in
 Puhalskii \cite{Puh16}.
Let the process $\hat\Psi_t=(\hat\Psi^t_s\,,s\in[0,1])$ be defined by 
\eqref{eq:37} with $\hat u(x)$ as $u(x)$\,.
Since $\hat u(x)$ is a bounded continuous function,
the random variables $N(\hat u(X^t_{ s}),
X_{ s}^t)$ are uniformly bounded.
Condition 2.2 in 
Puhalskii \cite{Puh16} is fulfilled because
 part 2 of condition (N) implies that
 the length of the projection of 
$N(\hat u(x),x)$ onto the nullspace  of $\sigma(x)$ is 
bounded away from zero  and,
consequently, 
 the quantity $\abs{N(\hat u(x),x)}^2-
N(\hat u(x),x)^T\sigma(x)(\sigma(x)\sigma(x)^T)^{-1}\sigma(x)^TN(\hat
u(x),x)$ is bounded away from zero.
Thus, Theorem 2.1 in Puhalskii \cite{Puh16} applies, so
 the pair
$(\hat\Psi^t,\mu^t)$ satisfies the Large Deviation Principle in 
$\mathbb C([0,1])\times \mathbb C_\uparrow([0,1],\mathbb
M_1(\R^l))$ for rate $t$\,, as $t\to\infty$\,,
 with the  deviation function in \eqref{eq:41''},
provided the  function 
$\Psi=(\Psi_s,\,s\in[0,1])$ 
 is absolutely continuous w.r.t.  Lebesgue measure on $\R_+$ and
the function $\mu=(\mu_s(\Gamma))$\,,
when considered as a measure on $[0,1]\times\R^l$\,, is
absolutely continuous w.r.t.  Lebesgue measure, i.e.,
 $\mu(ds,dx)=m_s(x)\,dx\,ds$\,, where
   $m_s(x)$\,,   as a function of $x$\,, belongs to 
$\hat{\mathbb{P}}$
  for almost all $s$\,.
If those conditions do not hold then 
$  \mathbf{ J}(\Psi,\mu)=\infty$\,. 
Since $L^{\hat \pi}_t=\hat\Psi^t_1$ and $\nu^t(\Gamma)=\mu^t([0,1],\Gamma)$\,,
by projection, the pair $(L^{\hat\pi}_t,\nu^t)$ obeys the Large Deviation
Principle in $\R\times \mathbb M_1(\R^l)$
 for rate $t$ with deviation function 
$\mathbf{I}^{\hat u}$\,,
such that 
$\mathbf{ I}^{\hat u}(L,\nu)=\inf\{ \mathbf{ J}(\Psi,\mu):\;
\Psi_1=L\,, \mu([0,1],\Gamma)=\nu(\Gamma)\}$\,. 
Therefore,
\begin{equation}
  \label{eq:48aa}
     \liminf_{t\to\infty}\frac{1}{t}\,\ln \mathbf{P}\bl(L^{\hat\pi}_t>
 q\br)\ge -\inf_{(L,\nu):\, L>q}
\mathbf{ I}^{\hat u}(L,\nu)\,.
\end{equation}
Calculations show that
\begin{equation*}
  \mathbf{ I}^{\hat u}(L,\nu)=\sup_{\lambda\in\mathbb R}
(\lambda L-\inf_{f\in\mathbb C_0^2}\int_{\R^l}
\overline H(x;\lambda,f,\hat u)\,\nu(dx))\,,
\end{equation*}
if  $\nu(dx)=m(x)\,dx$\,, where $m\in \hat{\mathbb{P}}$\,, and 
$\mathbf{ I}^{\hat u}(L,\nu)=\infty$\,, otherwise. 
By \eqref{eq:53}, the function $\lambda L-\inf_{f\in\mathbb C_0^2}\int_{\R^l}
\overline H(x;\lambda,f,\hat u)\,\hat m(x)\,dx$
 is  concave  in $\lambda$ and
is convex and lower semicontinuous in $L$\,.
It is     $\sup$--compact in $\lambda$
 because 
$\mathbf{ I}^{\hat u}(L,\nu)$ is a  deviation function, i.e., it is
$\inf$--compact. 
(We provide a direct proof of the latter property in the appendix.)
  Therefore,
by Theorem
7 on p.319 in Aubin and Ekeland \cite{AubEke84},
\begin{multline}
\inf_{(L,\nu):\, L>q}
\mathbf{ I}^{\hat u}(L,\nu)\le
\inf_{L>q}\sup_{\lambda\in\mathbb R}
(\lambda L-\inf_{f\in\mathbb C_0^2}\int_{\R^l}
\overline H(x;\lambda,f,\hat u)\,\hat m(x)\,dx)\\=
\sup_{\lambda\in\mathbb R}\inf_{L>q}
(\lambda L-\inf_{f\in\mathbb C_0^2}\int_{\R^l}
\overline H(x;\lambda,f,\hat u)\,\hat m(x)\,dx)=
\sup_{\lambda\ge0}
(\lambda q-\inf_{f\in\mathbb C_0^2}\int_{\R^l}
\overline H(x;\lambda,f,\hat u)\,\hat m(x)\,dx)\,.
  \label{eq:50}
\end{multline}
By integration by parts, if $f\in\mathbb C_0^2$\,, then, see \eqref{eq:53},
 \begin{multline}
   \label{eq:21}
   \int_{\R^l} \overline H(x;\lambda,f,v)\hat m(x)\,dx= 
\int_{\R^l}\bl(\lambda  M(v(x),x)
+\frac{1}{2}\,\abs{\lambda
N(v(x),x)+{\sigma(x)}^T\nabla f(x)}^2
+\nabla f(x)^T\,\theta(x)\\
-\frac{1}{2}\, \nabla f(x)^T
\frac{\text{div}\,\bl({\sigma(x)}{\sigma(x)}^T\hat m(x)\br)}{\hat
  m(x)}
\br)\hat m(x)\,dx\,.
 \end{multline}
As the righthand side depends on $f(x)$ through $\nabla f(x)$ only,
similarly to  developments above,  we use the righthand side of \eqref{eq:21}
   in order to define the lefthand side when 
$\nabla f\in\mathbb L^{1,2}_0(\R^l,\R^l,\hat m(x)\,dx)$\,.
By  the set of the gradients of $\mathbb C_0^2$--functions being dense
in $\mathbb L^{1,2}_0(\R^l,\R^l,\hat m(x)\,dx)$\,,
\begin{equation*}
    \inf_{f\in\mathbb C_0^2}\int_{\R^l}
\overline H(x;\lambda,f,\hat u)\,\hat m(x)\,dx=
\inf_{\nabla f\in\mathbb L^{1,2}_0(\R^l,\R^l,\hat m(x)\,dx)}\int_{\R^l}
\overline H(x;\lambda,f,\hat u)\,\hat m(x)\,dx\,.
\end{equation*}
Since $\overline H(x;1,f,\hat u)=H(x;1,f)$ (see \eqref{eq:96} and
\eqref{eq:25})\,, 
$\int_{\R^l}\overline H(x;1,f,\hat u)\hat m(x)\,dx=\breve G(1,\nabla 
f,\hat m)$\,. By $\nabla \hat f$ minimising $\breve G(1,\nabla 
f,\hat m)$ over $\nabla f\in \mathbb L^{1,2}_0(\R^l,\R^l,\hat
m(x)\,dx)$\,, 
 the function 
$q-\int_{\R^l}\overline H(x;1,f,\hat u)\hat m(x)\,dx$
attains  maximum over $\nabla f$
 in $\mathbb L_0^{1,2}(\R^l,\R^l,\hat m(x)\,dx)$ at 
$\nabla\hat f$\,. Therefore, the partial derivative
with respect to $\nabla f$ of $\lambda q-
\int_{\R^l}\overline H(x;\lambda,f,\hat u)\hat m(x)\,dx$
equals zero at $(1,\nabla\hat f)$\,.  
By  \eqref{eq:21}, we can write \eqref{eq:89} as
$d/d\lambda\,
\int_{\R^l}
\overline H(x;
\lambda,\hat  f,\hat u)\hat m(x)\,dx\Big|_{1}=q$\,, so,
 the partial 
derivative with respect to $\lambda$ of
$\lambda q-
\int_{\R^l}\overline H(x;\lambda,f,\hat u)\hat m(x)\,dx$
at $(1,\nabla\hat f)$  equals zero too.
The function $\lambda q-\int_{\R^l}\overline H(x;\lambda,f,\hat u)\hat
m(x)\,dx$ being concave in $(\lambda,\nabla f)$, 
it therefore attains a global maximum in $\R\times
\mathbb L^{1,2}_0(\R^l,\R^l,\hat m(x)\,dx)$
at  $(1,\nabla\hat f)$\,, cf. Proposition 1.2 on p.36 in Ekeland and
Temam \cite{EkeTem76}.
Hence,
\begin{equation*}
  \sup_{\lambda\ge0} \bl(
\lambda q-\inf_{f\in\mathbb{C}_0^2}
\int_{\R^l}\overline H(x;\lambda,f,\hat u)\hat m(x)\,dx\br)=
q-\breve G(1,\nabla\hat f,\hat m)\,.
\end{equation*}
The latter expression being equal to $J^{\text{o}}_q$\,,
 \eqref{eq:48aa}, and
\eqref{eq:50} imply the required lower bound \eqref{eq:27}.

\section{The proof of Theorem \ref{the:risk-sens}}
\label{sec:risk-sens-contr}
For the first assertion of part 1, let us assume that
$\lambda<\overline\lambda$\,. Let $\epsilon>0$ be such that
$\lambda(1+\epsilon)<\overline\lambda$\,. 
Let $ f_\epsilon$ represent the function $ f^{\lambda(1+\epsilon)}$\,.
By  \eqref{eq:40}, \eqref{eq:59}, \eqref{eq:29}, and \eqref{eq:110},
\begin{equation}
\label{eq:102}  
\limsup_{t\to\infty}\frac{1}{t}\,\ln
\mathbf{E}\exp((1+\epsilon)\lambda tL^{\pi}_t+f_\epsilon(X_t)-
f_\epsilon(X_0))\le
F((1+\epsilon)\lambda)
\,.
\end{equation}
By the reverse H\"older inequality,
\begin{equation*}
  \mathbf{E}\exp((1+\epsilon)\lambda tL^{\pi}_t+f_\epsilon(X_t)-
f_\epsilon(X_0))
\ge \bl(\mathbf{E}\exp(\lambda tL^{\pi}_t)\br)^{1+\epsilon}
\bl(\mathbf{E}\exp(-(1/\epsilon)(f_\epsilon(X_t)-
f_\epsilon(X_0))\br)^{-\epsilon}\,,
\end{equation*}
so, since $f_\epsilon$ is bounded below by an affine function 
and $\abs{X_0}$ is bounded, in analogy with the proof of \eqref{eq:60},
\begin{equation*}
  \limsup_{t\to\infty}\frac{1}{t}\,\ln\mathbf{E}\exp(\lambda tL^{\pi}_t)
\le F(\lambda)
\,.
\end{equation*}
The latter inequality is trivially true if $\lambda>\overline\lambda$\,.

We address the lower bound. Let $0<\lambda<\overline\lambda$\,. Then
  $F$ is subdifferentiable at $\lambda$\,.
Let  $q$ represent a subgradient of $F$ at $\lambda$\,.
Since $\lambda
q-F(\lambda)=J^{\text{o}}_q$\,,  by \eqref{eq:27},
 \begin{multline}
   \label{eq:66}
      \liminf_{t\to\infty}\frac{1}{t}\,
\ln         \mathbf{E}e^{\lambda t L^{\pi^\lambda}_t}\ge
\liminf_{t\to\infty}\frac{1}{t}\,
\ln         \mathbf{E}e^{\lambda t L^{\pi^\lambda}_t}
\chi_{\{L^{\pi^\lambda}_t\ge q\}}
\ge \lambda q+\liminf_{t\to\infty}\frac{1}{t}\,\ln\mathbf 
P(L^{\pi^\lambda}_t\ge
q)\\\ge\lambda
q-J^{\text{o}}_q=F(\lambda)\,.
\end{multline}
If $\lambda=\overline\lambda$ and 
   $F$ is  subdifferentiable at $\overline\lambda$\,, a similar proof
   applies.
Suppose  that  $\lambda=\overline\lambda$ and
  $F$ is not subdifferentiable at $\overline\lambda$\,.
By what has been just proved,
\begin{equation*}
  \liminf_{\check\lambda\uparrow\overline\lambda}
   \liminf_{t\to\infty}\frac{1}{t}\,
\ln         \mathbf{E}e^{\check\lambda t L^{\pi^{\check\lambda}}_t}\ge
\liminf_{\check\lambda\uparrow\overline\lambda}F(\check\lambda)=F(\overline\lambda)
\end{equation*}
and H\"older's inequality yields 
\begin{equation*}
  \liminf_{\check\lambda\uparrow\overline\lambda}
   \liminf_{t\to\infty}\frac{1}{t}\,
\ln         \mathbf{E}e^{\overline\lambda t L^{\pi^{\check\lambda}}_t}
\ge F(\overline\lambda)\,.
\end{equation*}
By requiring $\pi^{\overline\lambda}_t$ to match  $\pi^{\lambda_i}_t$ on certain
intervals $[t_i,t_{i+1})$ where $\lambda_i\uparrow\overline\lambda$
and $t_i\to\infty$  appropriately, we can ensure that
$     \liminf_{t\to\infty}(1/t)\,
\ln         \mathbf{E}e^{\overline\lambda t L^{\pi^{\overline\lambda}}_t}\ge
F(\overline\lambda)
$\,.

Suppose that $\lambda>\overline\lambda$\,. 
If $F$ is subdifferentiable at $\overline\lambda$\,, then, similarly
to \eqref{eq:66}, on choosing $q$ as a subgradient of $F$ at
$\overline\lambda$\,, 
\begin{equation}
  \label{eq:68}
         \liminf_{t\to\infty}\frac{1}{t}\,
\ln         \mathbf{E}e^{\lambda t L^{\pi^{\overline\lambda}}_t}
\ge \lambda q+\liminf_{t\to\infty}\frac{1}{t}\,\ln\mathbf 
P(L^{\pi^{\overline\lambda}}_t\ge
q)\ge\lambda
q-J^{\text{o}}_q=(\lambda-\overline\lambda)q+F(\overline\lambda)\,.
\end{equation}
Since $q$ can be chosen arbitrarily great, 
$     \lim_{t\to\infty}(1/t)\,
\ln         \mathbf{E}e^{\lambda t L^{\pi^{\overline\lambda}}_t}=\infty\,.$
If $F$ is not
subdifferentiable at $\overline\lambda$\,, then we pick $\lambda_i$
and $q_i$
such that $\lambda_i\uparrow \overline\lambda$\,, $q_i$ is a subgradient of
$F$ at $\lambda_i$ and $q_i\uparrow\infty$\,.
Arguing along the lines of \eqref{eq:68} yields
\begin{equation*}
  \liminf_{t\to\infty}\frac{1}{t}\,
\ln         \mathbf{E}e^{\lambda t L^{\pi^{\lambda_i}}_t}
\ge \lambda q_i+\liminf_{t\to\infty}\frac{1}{t}\,\ln\mathbf 
P(L^{\pi^{\lambda_i}}_t\ge q_i)\ge
(\lambda-\overline\lambda)q_i+F(\lambda_i)\,,
\end{equation*}
so there exists $\pi^\lambda$ such that 
$\lim_{t\to\infty}(1/t)\ln \mathbf Ee^{\lambda t L^{\pi^\lambda}_t}=\infty$\,.

We prove now part 2. 
  Since $\mathbf Ee^{\lambda t L^\pi_t}\ge e^{\lambda q t}\mathbf
  P(L^\pi_t\le q)$ provided $\lambda<0$\,, 
the inequality in \eqref{eq:58}  of Theorem \ref{the:bounds} implies that 
  \begin{equation*}
      \liminf_{t\to\infty}\frac{1}{t}\,
\ln \mathbf Ee^{\lambda t L^\pi_t}\ge \sup_{q\in\R}(\lambda q
-J^{\text{s}}_q)
=F(\lambda)\,,
  \end{equation*}
with the latter equality holding because by \eqref{eq:36} 
$J^{\text{s}}_q$ is the Legendre--Fenchel transform of the function
 that equals $F(\lambda)$ when $\lambda\le0$ and
equals $\infty$\,, otherwise.

Since $\lambda<0$\,,  $F$ is differentiable at $\lambda$\,, so
$\pi^\lambda$ is well defined. Let $u^\lambda(x)$ be such that
$\pi^\lambda_t=u^\lambda(X_t)$\,, i.e., $u^\lambda(x)$ is defined as
$\hat u(x)$ when $q =F'(\lambda)$\,.
By \eqref{eq:1}, assuming that $f\in\mathcal{A}_\kappa$\,,
\begin{equation*}
    \mathbf{E}\exp\bl(
 \lambda t L^{\pi^{\lambda,\rho}}_t+f(X_t)- f(X_0)-
t\int_{\R^l}\overline H(x;\lambda,f, u^{\lambda,\rho})\,\nu^t(dx)
\br)\le 1\,.
\end{equation*}
By  Lemma \ref{le:saddle_2}, recalling that $\abs{X_0}$ is bounded,
\begin{equation*}
\limsup_{t\to\infty}
\frac{1}{t}\,\ln  \mathbf{E}\exp(\lambda tL^{\pi^{\lambda,\rho}}_t)
\le\inf_{f\in\mathbb C^2} \sup_{x\in\R^l}
\overline H(x;\lambda,f,u^{\lambda,\rho})\,.
\end{equation*}
We now apply condition \eqref{eq:97}.
\appendix
\section{The scalar case}
We will assume that $l=n=1$\,, so,  in \eqref{eq:85}--\eqref{eq:85c}, 
$\Theta_1$\,, $\theta_2$\,, $A_1$, $a_2$\,, $r_1$\,, $r_2$\,,
$\alpha_1$, and $\alpha_2$ are scalars,  $\Theta_1<0$\,, 
$\sigma$ is a $1\times k$--matrix, $b$ is a
$1\times k$--matrix, and  $\beta$ is a $k$--vector.
Accordingly, $c$\,, $\sigma\sigma^T$\,, $\sigma b^T$\,, $P_1(\lambda)$\,, 
$p_2(\lambda)$\,, $A(\lambda)$\,, $B(\lambda)$\,, and $C$  are scalars.
The equation for $P_1(\lambda)$ is 
\begin{equation}
  \label{eq:106}
      B(\lambda)P_1(\lambda)^2+2A(\lambda)P_1(\lambda)+\frac{\lambda}{1-\lambda}\,
  C=0\,. 
\end{equation}
Let
\begin{equation}
  \label{eq:23}
  \tilde\beta=1+\frac{1}{\Theta_1^2}\,\frac{A_1-r_1}{c}\,
\bl(\sigma\sigma^T(A_1-r_1)-2\Theta_1\sigma b^T\br)\,.
\end{equation}
(The latter piece of notation is modelled on that of Pham \cite{Pha03}.)
We have that
\begin{equation*}
  A(\lambda)^2-B(\lambda)\,\frac{\lambda}{1-\lambda}\,C=
\Theta_1^2\,\frac{1-\lambda\tilde\beta}{1-\lambda}\,.
\end{equation*}
Hence, $P_1(\lambda)$ exists if and only if
  \begin{equation*}
    \lambda\le\frac{1}{\tilde\beta}\wedge 1\,,
  \end{equation*}
so,  $\tilde\lambda=\min(1/\tilde\beta,1)$\,.
(Not unexpectedly, if $\lambda<0$ then \eqref{eq:106} has both a
positive and a negative root, whereas both roots are positive if
$0<\lambda\le\tilde\lambda$\,.) If $\lambda<\tilde\lambda$\,, then
\begin{equation}
  \label{eq:16}
   P_1(\lambda)=\frac{1}{B(\lambda)}\,
\bl(-A(\lambda)-
\abs{\Theta_1}\sqrt{\dfrac{1-\lambda\tilde\beta}{1-\lambda}}\br)
\end{equation}
and $F(\lambda)$ is determined by \eqref{eq:79} and \eqref{eq:155}.
The minus sign in front of the square root is chosen because
$D(\lambda)=A(\lambda)+B(\lambda)P_1(\lambda)$ has to be negative
which is needed in order for the analogue of
 \eqref{eq:75} to have  a stationary
distribution.
Therefore,
\begin{equation}
  \label{eq:22}
  D(\lambda)=
\Theta_1\sqrt{\dfrac{1-\lambda\tilde\beta}{1-\lambda}}\,.
\end{equation}
The functions $D(\lambda)$ and $P_1(\lambda)$ are differentiable for
$\lambda<1\wedge (1/\tilde\beta)$\,. As in Pham \cite{Pha03}, we distinguish between three
cases:
$\tilde\beta>1$\,, $\tilde\beta<1$\,, and $\tilde\beta=1$\,.

Suppose that $\tilde\beta>1$\, so, $\tilde\lambda=1/\tilde\beta$\,.
 Then $P_1(\lambda)$ and $D(\lambda)$ are continuous on
 $[0,1/\tilde\beta]$ and differentiable on
$(0,1/\tilde\beta)$\,. We have that  $P_1(1/\tilde\beta)=
-A(1/\tilde\beta)/B(1/\tilde\beta)$
and $D(1/\tilde\beta)=0$\,.
Also,
$D(\lambda)/\sqrt{1/\tilde\beta-\lambda}\to-\abs{\Theta_1}\sqrt{\tilde\beta}/\sqrt{1-1/\tilde\beta}$ and
 $(P_1(1/\tilde\beta)-P_1(\lambda))/\sqrt{1/\tilde\beta-\lambda}
\to \abs{\Theta_1}\sqrt{\tilde\beta}/(B(1/\tilde\beta)\sqrt{1-1/\tilde\beta})$\,, as
$\lambda\uparrow 1/\tilde\beta$\,.
In addition, by \eqref{eq:79} and \eqref{eq:155}, if
$E(1/\tilde\beta)\not=0$\,,
then 
$\abs{p_2(\lambda)}=\abs{E(\lambda)/D(\lambda)}\to\infty$ and $F(\lambda)\to\infty$\,, so,
$F(\lambda)=\infty$ when $\lambda\ge1/\tilde\beta$\,,
$\overline\lambda=1/\tilde\beta$\,, and $\hat\lambda<\overline\lambda$\,.  
Suppose that $E(1/\tilde\beta)=0$\,. By \eqref{eq:79} and \eqref{eq:139a},
$E(\lambda)=D(\lambda)Z(\lambda)+U(\lambda)$\,, where
\begin{equation*}
Z(\lambda)=
  \frac{ \lambda}{1-\lambda}\,
b\sigma^T c^{-1}(a_2-r_2-\lambda
b\beta)-\lambda\sigma\beta+\theta_2
\end{equation*}
and \begin{equation*}
  U(\lambda)=\frac{\lambda}{1-\lambda}\,(A_1-r_1)c^{-1}(a_2-r_2-\lambda
  b\beta)
+\lambda(r_1-\alpha_1)-\frac{A(\lambda)}{B(\lambda)}\,
Z(\lambda)\,.
\end{equation*}
Therefore,
\begin{equation*}
  p_2(\lambda)=-\frac{Z(\lambda)}{B(\lambda)}\,
-\,\frac{U(\lambda)}{D(\lambda)}\,,
\end{equation*}
Since $E(1/\tilde\beta)=D(1/\tilde\beta)=0$\,,
 $U(1/\tilde\beta)=0$\,.
By $U(\lambda)$ being linear in a neighbourhood of $1/\tilde\beta$\,,
 $p_2(\lambda)$ is continuous at $1/\tilde\beta$\,,
$p_2(1/\tilde\beta)=-Z(1/\tilde\beta)/B(1/\tilde\beta)$\,, and
$F(1/\tilde\beta)$ is finite. Let us look at the derivative at $1/\tilde\beta$\,.
We have that  
$(p_2(1/\tilde\beta)-p_2(\lambda))/\sqrt{1/\tilde\beta-\lambda}
\to U'(1/\tilde\beta)\sqrt{1-1/\tilde\beta}/(\Theta_1\sqrt{\tilde\beta})$\,,
as $\lambda\uparrow1/\tilde\beta$\,.
By \eqref{eq:155}, $(F(1/\tilde\beta)-F(\lambda))/\sqrt{1/\tilde\beta-\lambda}
\to (1/2)\,\sigma\sigma^T
\abs{\Theta_1}\sqrt{\tilde\beta}/(B(1/\tilde\beta)\,
\sqrt{1-1/\tilde\beta})$\,.
Therefore, $F'(1/\tilde\beta-)=\infty$\,, so, $\overline\lambda=1/\tilde\beta$
and $\hat\lambda<\overline\lambda$\,.

Suppose that $\tilde\beta<1$\,. By \eqref{eq:23}, $b\sigma^T\not=0$\,.
Also, $\tilde\lambda=\overline\lambda=1$\,. 
By  \eqref{eq:16} and \eqref{eq:22}, $P_1(\lambda)$  has limit $P_1(1)$  when
$\lambda\uparrow 1$ and $(P_1(\lambda)-P_1(1))/\sqrt{1-\lambda}\to
\Theta_1\sqrt{1-\tilde\beta}/((b\sigma^T)^2c^{-1})$ as $\lambda\uparrow1$\,.
In fact, $P_1(1)=-(A_1-r_1)/(b\sigma^T)$\,.
By \eqref{eq:22}, \eqref{eq:79}, \eqref{eq:139a}, and \eqref{eq:155},
 $p_2(\lambda)\to -(a_2-r_2-b\beta)/b\sigma^T$\,, as
 $\lambda\uparrow1$\,,  which quantity we denote
by $p_2(1)$\,.
By \eqref{eq:79} and \eqref{eq:139a},
on noting that $A_1-r_1+b\sigma^TP_1(1)=0$\,.
\begin{equation}
\label{eq:28}\lim_{\lambda\uparrow1}  \frac{p_2(1)-p_2(\lambda))}{\sqrt{1-\lambda}}
=K_1\,,
\end{equation}
where
\begin{equation*}
    K_1=\frac{1}{\Theta_1\sqrt{1-\tilde\beta}}\,\bl(\bl(\Theta_1 
  -\frac{\sigma\sigma^T (A_1-r_1)}{b\sigma^T}\br)\,p_2(1)+r_1-\alpha_1
+P_1(1)(\theta_2-\sigma\beta)\br)\,.
\end{equation*}
Since $a_2-r_2-b\beta+b\sigma^Tp_2(1)=0$\,,
\begin{equation*}
  \lim_{\lambda\uparrow1}\frac{a_2-r_2-\lambda
b\beta+b\sigma^Tp_2(\lambda)}{\sqrt{1-\lambda}}
=\lim_{\lambda\uparrow1}\frac{b\sigma^T(p_2(\lambda)-p_2(1))}{\sqrt{1-\lambda}}
=b\sigma^T K_1\,.
\end{equation*}
By \eqref{eq:139a}, $F(1-)<\infty$\,.
Let us look at the derivative $F'(1-)$\,. 
One needs to improve on \eqref{eq:28}. More specifically,
by \eqref{eq:79}, \eqref{eq:139a}, \eqref{eq:16} and \eqref{eq:22},
one can expand as follows (either by hand or by the use of Mathematica): 
as $\lambda\uparrow1$\,,
\begin{equation*}
  p_2(\lambda)=p_2(1)-K_1 \sqrt{1-\lambda}-K_2 (1-\lambda)+o(1-\lambda)\,,
\end{equation*}
where
\begin{equation*}
K_2=\frac{\sigma\sigma^T}{(b\sigma^T)^2c^{-1}}\,p_2(1)
+\frac{b\beta}{b\sigma^T}\,+\frac{\theta_2-\sigma\beta}{(b\sigma^T)^2c^{-1}}\,.
\end{equation*}
By \eqref{eq:155},
\begin{equation*}
  \lim_{\lambda\uparrow1}\frac{F(\lambda)-F(1)}{\sqrt{1-\lambda}}
=-\sigma\sigma^Tp_2(1)K_1-b\sigma^T K_1(b\beta-b\sigma^TK_2)c^{-1}+
(\sigma\beta-\theta_2)K_1+
\frac{1}{2}\,\sigma\sigma^T\,\frac{\Theta_1\sqrt{1-\tilde\beta}}{(b\sigma^T)^2c^{-1}}\,,
\end{equation*}
which simplifies to
\begin{equation*}
  \lim_{\lambda\uparrow1}\frac{F(1)-F(\lambda)}{\sqrt{1-\lambda}}=
\frac{\abs{\Theta_1}\sqrt{1-\tilde\beta}\,\sigma\sigma^T}{2(b\sigma^T)^2c^{-1}}\,,
\end{equation*}
implying that $F'(1-)=\infty$\,, so, $\hat\lambda<\overline\lambda$\,.

Let us consider the  case that $\tilde\beta=1$\,, 
so, 
$(A_1-r_1)
\bl(\sigma\sigma^T(A_1-r_1)-2\Theta_1\sigma b^T\br)=0\,.
$ One has that $\tilde\lambda=\overline\lambda=1$\,,
$  D(\lambda)=\Theta_1$\,,
 $ P_1(\lambda)=
(-\sigma
  b^Tc^{-1}(A_1-r_1))/\bl((1-\lambda)/\lambda\,\sigma\sigma^T 
+\sigma b^Tc^{-1}b\sigma^T\br)$
and $  p_2(\lambda)=-E(\lambda)/\Theta_1\,.$
Thus,  if $b\sigma^T=0$\,, then $A_1-r_1=0$ and  $P_1(\lambda)=0$\,.
If $b\sigma^T\not=0$\,, then
  $  P_1(1)=
    -(A_1-r_1)/(b\sigma^T)$\,,
  $P_1'(1)=      -\sigma\sigma^T(A_1-r_1)/((b\sigma^T)^3c^{-1})$\,,
  and
  $  P_1''(1)=      2\sigma\sigma^T(A_1-r_1)/\bl((b\sigma^T)^3c^{-1}\br)
\bl(1-\sigma\sigma^T/\bl((b\sigma^T)^2c^{-1}\br)\br)$\,.
Since  
\begin{equation}
  \label{eq:54}
A_1-r_1+b\sigma^TP_1(1)=0\,,  
\end{equation}
  $E(\lambda)$ is
continuous on $[0,1]$ and is
differentiable on $(0,1)$\,, see \eqref{eq:139a}, so is
$p_2(\lambda)$\,. 
By \eqref{eq:155}, 
if $a_2-r_2-b\beta+b\sigma^Tp_2(1)\not=0$\,, then
$F(\lambda)\to\infty$\,, as $\lambda\to\infty$\,, so
$\hat\lambda<\overline\lambda$\,.
If 
\begin{equation}
  \label{eq:57}
a_2-r_2-b\beta+b\sigma^Tp_2(1)=0\,,  
\end{equation}
 then 
\begin{equation*}
      F(1)=\frac{1}{2}\,
\sigma\sigma^T p_2(1)^2+
(-\sigma\beta+\theta_2)p_2(1)
+r_2-\alpha_2+\abs{\beta}^2+\frac{1}{2}\,\sigma\sigma^TP_1(1)
\end{equation*}
and
  \begin{multline*}
      F'(1-)=
\sigma\sigma^Tp_2'(1-) p_2(1)
+\frac{1}{2c}\,(b\sigma^T p_2'(1-)-b\beta)^2-\beta^T\sigma^Tp_2(1)+
(-\sigma\beta+\theta_2)p_2'(1-)\\
+r_2-\alpha_2+\frac{3}{2}\,\abs{\beta}^2+
\frac{1}{2}\,\sigma\sigma^TP_1'(1-)\,.
\end{multline*}
As one can see, $F(\lambda)$ is not essentially smooth.
We obtain that $\hat\lambda<\overline\lambda$ if and only if
$F'(1-)>q$\,, otherwise $\hat\lambda=1$\,.
It is noteworthy that \eqref{eq:54} and \eqref{eq:57} represent
conditions \eqref{eq:130} and \eqref{eq:133}, respectively.

The cases  where $\tilde\beta\ge 1$ and
 $F(\lambda)\to \infty$ as $\lambda\uparrow
1/\tilde\beta$ and where $\tilde\beta<1$ have been 
analysed by Pham \cite{Pha03}. 
\section{Proof  of Lemma \ref{le:angle}}
Suppose that  the matrix $\sigma(x)Q_1(x)\sigma(x)^T$ is uniformly
  positive definite. Then $\abs{Q_1(x)\sigma(x)^Ty}\ge k_1\abs{y}$\,, for some
  $k_1>0$\,, all $x\in\R^l$ and all $y\in\R^k$\,.
Since
$\abs{\sigma(x)^Ty}^2=y^T\sigma(x)\sigma(x)^Ty\le k_2\abs{y}^2$\,, for
some $k_2\ge k_1$\,,
we have that
\begin{equation*}
  \frac{\abs{(I_k-Q_1(x))\sigma(x)^Ty}}{\abs{\sigma(x)^Ty}}
\le\frac{\sqrt{\abs{\sigma(x)^Ty}^2-
k_1^2\abs{y}^2}}{\abs{\sigma(x)^Ty}}
\le\sqrt{1-
\frac{k_1^2}{k_2^2}}\,.
\end{equation*}
Therefore, since $I_k-Q_1(x)$ is the operator of the orthogonal
projection on the range of $b(x)^T$\,, given $z\in \R^n$\,,
\begin{equation*}
  (\sigma(x)^Ty)^Tb(x)^Tz\le 
\sqrt{1-
\frac{k_1^2}{k_2^2}}\,\abs{\sigma(x)^Ty}\abs{b(x)^Tz}\,,
\end{equation*}
so nonzero vectors from the ranges of $\sigma(x)^T$ and of $b(x)^T$ are at
angles uniformly bounded away from zero.
Conversely, if $(\sigma(x)^Ty)^Tb(x)^Tz\le 
\rho_1\,\abs{\sigma(x)^Ty}\abs{b(x)^Tz}$, for some $\rho_1\in(0,1)$\,,
then
$\abs{(I_k-Q_1(x))\sigma(x)^Ty}\le \rho_1 {\abs{\sigma(x)^Ty}}$ so that
$\abs{Q_1(x)\sigma(x)^Ty}=\sqrt{\abs{\sigma(x)^Ty}^2-
\abs{(I_k-Q_1(x))\sigma(x)^Ty}^2}\ge(1-\rho_1)\abs{\sigma(x)^Ty}\ge
(1-\rho_1)\rho_2\abs{y} $\,, the latter inequality holding by
$\sigma(x)\sigma(x)^T$ being uniformly positive definite, where $\rho_2>0$\,.
Thus, the matrix 
$\sigma(x)Q_1(x)\sigma(x)^T$ is uniformly positive definite if
and only if ''the angle condition'' holds. Since the angle condition is
symmetric in $\sigma(x)$ and $b(x)$\,, it is also equivalent to
 the matrix
$c(x)-b(x)\sigma(x)^T(\sigma(x)\sigma(x)^T)^{-1}\sigma(x)  b(x)^T$
being uniformly positive definite.

In order to prove the second assertion of the lemma, let us observe
that 
\begin{equation*}
\beta(x)^T  Q_2(x)\beta(x)=\beta(x)^TQ_1(x)
\bl(I_k-Q_1(x)\sigma(x)^T(\sigma(x)Q_1(x)Q_1(x)\sigma(x)^T)^{-1}\sigma(x)
Q_1(x)\br)Q_1(x)\beta(x)\,,
\end{equation*}
so, if $\beta(x)^T  Q_2(x)\beta(x)$ is bounded away from zero, then,
by $\abs{\beta(x)Q_1(x)}$ being bounded,
there exists $\rho_3\in(0,1)$ such that, for all $x\in\R^l$\,,
\begin{equation*}
  (1-\rho_3)\abs{Q_1(x)\beta(x)}>
\bl(Q_1(x)\sigma(x)^T(\sigma(x)Q_1(x)Q_1(x)\sigma(x)^T)^{-1}\sigma(x)
Q_1(x)\br)Q_1(x)\beta(x)\,.
\end{equation*}
The righthand side representing the orthogonal projection of
$Q_1(x)\beta(x)$ onto the range of $(\sigma(x)Q_1(x))^T$ implies that,
given $y\in \R^l$\,,
\begin{equation*}
  \abs{(Q_1(x)\beta(x))^TQ_1(x)\sigma(x)^Ty}\le
  \rho_3\abs{Q_1(x)\beta(x)}
\abs{Q_1(x)\sigma(x)^Ty}\,,
\end{equation*}
which means that $Q_1(x)\beta(x)$ is at  angles
to $Q_1(x)\sigma(x)^Ty$ which are  bounded below
uniformly over $y$\,. The converse is proved similarly.

\section{Proof of Lemma \ref{le:condition}}
By Lemma \ref{le:saddle_2},
\begin{equation}
  \label{eq:107}
\inf_{f\in\mathbb C^2}\sup_{x\in\R^l}\overline H(x;\hat\lambda,f,\hat u^\rho)=
  \sup_{\nu\in\mathcal{P}}\inf_{f\in\mathbb C^2_0}
\int_{\R^l}\overline H(x;\hat\lambda,f,\hat u^\rho)\nu(dx)\,.
\end{equation}
For function $f$ and $\rho>0$\,, we denote
$f(x)^\rho=f(x)\chi_{[0,
  \rho]}(\abs{x})$\,.
By  \eqref{eq:4},  \eqref{eq:8}, \eqref{eq:69}, and \eqref{eq:61}, 
\begin{multline}
  \label{eq:34a}
  \overline H(x;\hat\lambda,f,\hat u^\rho)\\=
-\,\frac{\hat\lambda}{2(1-\hat\lambda)}\,\bl(\norm{b(x)\sigma(x)^T\nabla\hat
  f(x)^\rho}^2_{c(x)^{-1}}
-\norm{(a(x)-r(x)\mathbf1)^\rho}^2
_{c(x)^{-1}}\br)
\\+\hat\lambda(r(x)-\alpha(x)
+\frac{1}{2}\,\abs{\beta(x)}^2)
+\frac{\hat\lambda}{2(1-\hat\lambda)}\,
\norm{\hat\lambda b(x)\beta(x)^\rho}^2_{c(x)^{-1}}
\\
+
\frac{\hat\lambda}{1-\hat\lambda}\,\Bl(-\bl(\bl(a(x)-r(x)\mathbf1
\br)^\rho\br)^Tc(x)^{-1}b(x)
\hat\lambda\beta(x)
\\+
\bl(\bl(a(x)-r(x)\mathbf1
-\hat\lambda b(x)\beta(x)+b(x)\sigma(x)^T\nabla\hat
  f(x)\br)^\rho\br)^Tc(x)^{-1}b(x){\sigma(x)}^T\nabla f(x)\Br)\\
+\frac{1}{2}\,\abs{-\hat\lambda\beta(x)+{\sigma(x)}^T\nabla f(x)}^2
+\nabla f(x)^T\,\theta(x)
+\frac{1}{2}\, \text{tr}\,\bl({\sigma(x)}{\sigma(x)}^T\nabla^2
f(x)\br)\,.
\end{multline}
As in the proof of Lemma \ref{le:sup-comp},
it follows that, under the hypotheses,  there exist
$\overline\kappa>0$\,, $\overline K_1>0$ and $\overline K_2>0$
such that
$ \overline H(x;\hat\lambda,f_{\overline\kappa},\hat u^\rho)\le 
\overline K_1-\overline K_2\abs{x}^2$\,, for all
$x\in\R^l$ and all $\rho>0$\,.
Consequently,  $ \inf_{f\in\mathbb C_0^2}\int_{\R^l}\overline
H(x;\hat\lambda,f,\hat u^\rho)\nu(dx)$ is a $\sup$--compact function
of $\nu\in\mathcal{P}$\,, so,
  the supremum over $\nu$ on the righthand side of \eqref{eq:107} is attained at
some $\nu_\rho$\,. Moreover, if the $\limsup$ on the lefthand side of
\eqref{eq:97} is greater than $-\infty$\,, then
\begin{equation}
  \label{eq:25a}
  \limsup_{\rho\to\infty}\int_{\R^l}\abs{x}^2\nu_\rho(dx)<\infty\,,
\end{equation}
so, the $\nu_\rho$ make up a 
relatively compact subset of $\mathcal{P}$\,.

If \eqref{eq:31} holds, then,
  given $\tilde f\in\mathbb C_0^2$\,, by \eqref{eq:34a},
there exist $\tilde C_1$ and $\tilde C_2$\,, such that, for all
$x\in\R^l$ and all $\rho>0$\,,
\begin{equation}
  \label{eq:35a}\overline H(x;\hat\lambda,\tilde f,\hat u^\rho)\le
\tilde C_1\abs{x}+\tilde C_2\,.
\end{equation}
Assuming   that $\nu_\rho\to\tilde \nu$\,, we have,
by the convergence  $\overline H(x_\rho;\hat\lambda,\tilde f,\hat
u^\rho)
\to \overline H(\tilde x;\hat\lambda,\tilde f,\hat u)$ when $x_\rho\to\tilde x$\,, by
\eqref{eq:25a}, \eqref{eq:35a}, the definition of the topology on 
$\mathcal{P}$\,, Fatou's lemma, and the dominated convergence theorem, that
\begin{equation*}
  \limsup_{\rho\to\infty}
\int_{\R^l}\overline H(x;\hat\lambda,\tilde f,\hat u^\rho)\nu_\rho(dx)
\le\int_{\R^l}\overline H(x;\hat\lambda,\tilde f,\hat u)\tilde\nu(dx)\,,
\end{equation*}
so, on recalling \eqref{eq:47},
\begin{equation*}
    \limsup_{\rho\to\infty}
\inf_{f\in\mathbb C_0^2}
\int_{\R^l}\overline H(x;\hat\lambda,f,\hat u^\rho)\nu_\rho(dx)
\le \inf_{f\in\mathbb C_0^2}\int_{\R^l}\overline H(x;\hat\lambda,f,\hat
u)\tilde\nu(dx)\le F(\hat\lambda)\,.
\end{equation*}

\section{  }
\label{sec:B}
\begin{lemma}
  \label{le:dopoln}
Given $L\in\R$\,,
  $m\in\hat{\mathbb P}$\,, and $v\in \mathbb L^2(\R^l,\R^n,m(x)\,dx)$\,, 
the sets 
\begin{equation*}
  \{\lambda\in\R:\,\lambda L-\inf_{f\in\mathbb C_0^2}\int_{\R^l}
\overline H(x;\lambda,f,v)\, m(x)\,dx\ge \alpha\}
\end{equation*}
are compact for all $\alpha\in\R$\,.
\end{lemma}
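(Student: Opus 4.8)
Write $\Phi(\lambda)=\inf_{f\in\mathbb C_0^2}\int_{\R^l}\overline H(x;\lambda,f,v)\,m(x)\,dx$, so that the set in question is $\{\lambda\in\R:\ \lambda L-\Phi(\lambda)\ge\alpha\}$. The plan is to show that $\Phi$ is a finite convex function on $\R$ satisfying $\lambda L-\Phi(\lambda)\to-\infty$ as $\abs\lambda\to\infty$; being then finite, continuous and tending to $-\infty$ at $\pm\infty$, the function $\lambda L-\Phi(\lambda)$ has closed and bounded, hence compact, superlevel sets. By \eqref{eq:53}, $\overline H(x;\lambda,f,v)$ is a convex (quadratic plus affine) function of $(\lambda,\nabla f(x),\nabla^2 f(x))$, so $\int_{\R^l}\overline H(x;\lambda,f,v)\,m(x)\,dx$ is convex in $(\lambda,f)$ and its partial infimum $\Phi$ is convex in $\lambda$; taking $f=0$ gives $\Phi(\lambda)\le\int_{\R^l}\bl(\lambda M(v(x),x)+\tfrac12\lambda^2\abs{N(v(x),x)}^2\br)m(x)\,dx<\infty$, the finiteness being immediate from $v\in\mathbb L^2(\R^l,\R^n,m(x)\,dx)$, $m\in\mathbb P$, the linear growth of $a,r,\alpha$, and the boundedness of $b$, $c$ and $\beta$.

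For the growth of $\Phi$ I would fix $f\in\mathbb C_0^2$ and integrate by parts exactly as in the derivation of \eqref{eq:21} (valid for any $m\in\hat{\mathbb P}$ and compactly supported $f$, as in the proof of Lemma \ref{le:minmax}), obtaining that $\int_{\R^l}\overline H(x;\lambda,f,v)\,m(x)\,dx$ equals $\int_{\R^l}\bl(\lambda M(v(x),x)+\tfrac12\abs{\lambda N(v(x),x)+\sigma(x)^T\nabla f(x)}^2+\nabla f(x)^T\phi(x)\br)m(x)\,dx$, where $\phi(x)=\theta(x)-\tfrac12\,\text{div}\,(\sigma(x)\sigma(x)^Tm(x))/m(x)$. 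Completing the square pointwise in $\nabla f(x)$ — i.e.\ bounding the integrand below by its value at $\nabla f(x)=-(\sigma(x)\sigma(x)^T)^{-1}(\lambda\sigma(x)N(v(x),x)+\phi(x))$ — yields
\begin{multline*}
  \Phi(\lambda)\ge\int_{\R^l}\Bl(\lambda M(v(x),x)+\frac{\lambda^2}{2}\,\abs{N(v(x),x)}^2\\
  -\frac12\,\bl(\lambda\sigma(x)N(v(x),x)+\phi(x)\br)^T\bl(\sigma(x)\sigma(x)^T\br)^{-1}\bl(\lambda\sigma(x)N(v(x),x)+\phi(x)\br)\Br)m(x)\,dx.
\end{multline*}
The coefficient of $\lambda^2$ on the right equals $C_2:=\tfrac12\int_{\R^l}\abs{\Pi(x)N(v(x),x)}^2 m(x)\,dx$, with $\Pi(x)=I_k-\sigma(x)^T(\sigma(x)\sigma(x)^T)^{-1}\sigma(x)$ the orthogonal projection onto the null space of $\sigma(x)$; the coefficients of $\lambda^1$ and $\lambda^0$ are finite constants $C_1,C_0$, because $N(v(\cdot),\cdot)\in\mathbb L^2(\R^l,\R^k,m(x)\,dx)$, $\int_{\R^l}M(v(x),x)\,m(x)\,dx$ converges absolutely, and $\phi\in\mathbb L^2(\R^l,\R^l,m(x)\,dx)$ — the last using $\theta$ of linear growth with $\int_{\R^l}\abs x^2m\,dx<\infty$, the identity $\text{div}\,(\sigma\sigma^Tm)/m=\text{div}\,(\sigma\sigma^T)+\sigma\sigma^T\nabla m/m$, the boundedness of $\text{div}\,(\sigma\sigma^T)$, and $\int_{\R^l}\abs{\nabla m}^2/m\,dx<\infty$ coming from $\sqrt m\in\mathbb W^{1,2}(\R^l)$. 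Hence $\Phi(\lambda)\ge C_2\lambda^2+C_1\lambda+C_0$ for all $\lambda\in\R$.

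The crux is that $C_2>0$. Part 2 of condition (N) forces $\beta(x)$ not to belong to the sum of the ranges of $b(x)^T$ and $\sigma(x)^T$ (as noted below Lemma \ref{le:angle}); consequently $N(v(x),x)=b(x)^Tv(x)-\beta(x)$ never lies in the range of $\sigma(x)^T$, i.e.\ $\Pi(x)N(v(x),x)\ne0$ for every $x$ and every value of $v(x)$. Since $m$ is continuous and strictly positive (Proposition 1.2.18 in Bogachev, Krylov, and R\"ockner \cite{BogKryRoc}), the integrand $\abs{\Pi(x)N(v(x),x)}^2m(x)$ is strictly positive throughout, whence $C_2>0$. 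It follows that $\lambda L-\Phi(\lambda)\le-C_2\lambda^2+(L-C_1)\lambda-C_0\to-\infty$ as $\abs\lambda\to\infty$; combined with the finiteness and continuity of $\Phi$ (a finite convex function on $\R$), the set $\{\lambda\in\R:\ \lambda L-\Phi(\lambda)\ge\alpha\}$ is closed and bounded, hence compact. The main obstacle is precisely the strict positivity $C_2>0$ — pinning down that the projection of $N(v(x),x)$ onto the null space of $\sigma(x)$ cannot vanish, which is where condition (N) is used; the integration by parts and the finiteness bookkeeping, relying on $m\in\hat{\mathbb P}$, are routine.
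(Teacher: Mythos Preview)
Your proof is correct and runs parallel to the paper's, with one streamlining. After the same integration by parts, the paper computes $\Phi(\lambda)$ \emph{exactly} by projecting, in $\mathbb L^2(\R^l,\R^l,m\,dx)$ with the inner product $\langle h_1,h_2\rangle=\int h_1^T\sigma\sigma^T h_2\,m\,dx$, onto the closed subspace $\mathbb L^{1,2}_0$; the minimiser comes out as $\nabla f=\lambda g_1+g_2$ and $\Phi(\lambda)$ becomes an explicit quadratic, whose $\lambda^2$-coefficient is then bounded via the contraction property of the projection. You bypass the Hilbert-space projection by minimising the integrand pointwise over all of $\R^l$ (not just over gradients of $\mathbb C_0^2$ functions), landing in one step on the same quadratic lower bound with leading coefficient $C_2=\tfrac12\int\abs{\Pi(x)N(v(x),x)}^2m(x)\,dx$. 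Your route is slightly more elementary; the paper's yields an exact expression for $\Phi$, not merely a bound. The decisive step---using condition (N) to force $N(v(x),x)$ outside the range of $\sigma(x)^T$---is identical in both arguments.

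One minor correction: invoking Proposition~1.2.18 of Bogachev--Krylov--R\"ockner for continuity and strict positivity of $m$ is not justified for a generic $m\in\hat{\mathbb P}$ (that result pertains to invariant densities solving a Fokker--Planck equation, as with $\hat m$ elsewhere in the paper). It is also unnecessary: since $\abs{\Pi(x)N(v(x),x)}^2>0$ for every $x$ and every value of $v(x)$, and $m$ is a probability density, $C_2>0$ follows immediately.
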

\begin{proof}
By \eqref{eq:53},
  \begin{multline*}
    \inf_{f\in\mathbb C_0^2}\int_{\R^l}
\overline H(x;\lambda,f,v)\, m(x)\,dx=
\inf_{f\in\mathbb L^{1,2}_0(\R^l,\R^l,m(x)\,dx)}\int_{\R^l}
\bl(\lambda  M(v(x),x)\\
+\frac{1}{2}\,\abs{\lambda
N(v(x),x)+{\sigma(x)}^T\nabla f(x)}^2
+\nabla f(x)^T\,\theta(x)
-\frac{1}{2}\,\nabla f(x)^T
\frac{
\text{div}({\sigma(x)}{\sigma(x)}^Tm(x))}{m(x)}\br)
\, m(x)\,dx\,.
  \end{multline*}
The infimum is attained at 
\begin{equation*}
  \nabla f(x)=\lambda g_1(x)+g_2(x)\,,
\end{equation*}
where
\begin{align*}
  g_1&=-\Pi\bl((\sigma(\cdot)\sigma(\cdot)^T)^{-1}
\sigma(\cdot)^TN(v(\cdot),\cdot)\br),\\
g_2&=\Pi\bl((\sigma(\cdot)\sigma(\cdot)^T)^{-1}
\bl(-\theta(\cdot)+
\frac{\text{div}({\sigma(\cdot)}{\sigma(\cdot)}^Tm(\cdot))}{2m(\cdot)}
\br)\br)\,,
\end{align*}
with $\Pi$ representing the operator of the orthogonal projection on
$\mathbb L^{1,2}_0(\R^l,\R^l,m(x)\,dx)$ in $\mathbb
L^2(\R^l,\R^l,m(x)\,dx)$ with respect to  the inner product
$\langle h_1,h_2\rangle
=\int_{\R^l}h_1(x)^T\sigma(x)\sigma(x)^Th_2(x)\,m(x)\,dx$\,. 
Therefore,
\begin{multline}
  \label{eq:114}
  \lambda L-\inf_{f\in\mathbb C_0^2}\int_{\R^l}
\overline H(x;\lambda,f,v)\, m(x)\,dx\\=
\lambda\bl(L-\int_{\R^l}  M(v(x),x)m(x)\,dx-
\int_{\R^l}g_1(x)^T\sigma(x)\sigma(x)^Tg_2(x)m(x)\,dx\br)
\\+\frac{1}{2}\,\int_{\R^l}g_2(x)^T\sigma(x)\sigma(x)^Tg_2(x)m(x)\,dx
-\frac{\lambda^2}{2}\,
\int_{\R^l}\bl(\abs{N(v(x),x)}^2-g_1(x)^T\sigma(x)\sigma(x)^Tg_1(x)\br)
m(x)\,dx\,.
\end{multline}
Since projection is a contraction operator,
\begin{equation*}
  \int_{\R^l}g_1(x)^T\sigma(x)\sigma(x)^Tg_1(x)m(x)\,dx
\le \int_{\R^l}N(v(x),x)^T\sigma(x)^T(\sigma(x)\sigma(x)^T)^{-1}
\sigma(x)N(v(x),x)m(x)\,dx\,.
\end{equation*}
As mentioned, by condition (N), $\beta(x)$ does not belong to the sum
of the ranges of $b(x)^T$ and of $\sigma(x)^T$\,. By \eqref{eq:8},
$N(u,x)$ does not belong to the range of $\sigma(x)^T$\,, for any $u$
and $x$\,. Therefore, the projection of $N(v(x),x)$ onto the null
space of $\sigma(x)$ is nonzero which implies that 
 $\abs{N(v(x),x)}^2-
N(v(x),x)^T\sigma(x)^T(\sigma(x)\sigma(x)^T)^{-1}
\sigma(x)N(v(x),x)$ is positive for any $x$\,,
so, the coefficient
of $\lambda^2$ on the righthand side of  \eqref{eq:114} is positive,
yielding the needed property.
\end{proof}
The next result seems to be ''well known''. We haven't been
able to find a reference, though.
\begin{lemma}
  \label{le:exp_moment}
For arbitrary $\kappa>0$\,,
\begin{equation*}
  \limsup_{t\to\infty}\mathbf Ee^{\kappa\abs{X_t}}<\infty\,.
\end{equation*}
\end{lemma}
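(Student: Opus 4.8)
The plan is to obtain a uniform-in-$t$ bound on $\mathbf Ee^{\kappa\abs{X_t}}$ by comparing $\abs{X_t}^2$ (or rather a smooth surrogate of it) with the solution of a scalar ODE, using the drift-dissipativity furnished by \eqref{eq:45}. First I would fix $\kappa>0$ and choose $\gamma>0$ small enough that $2\gamma<\gamma_0$, where $\gamma_0$ is the exponent for which $\mathbf Ee^{\gamma_0\abs{X_0}^2}<\infty$ (available by \eqref{eq:77}); since $e^{\kappa\abs{x}}\le e^{\kappa^2/(4\gamma)}e^{\gamma\abs{x}^2}$, it suffices to bound $\mathbf Ee^{\gamma\abs{X_t}^2}$ uniformly in $t$ for all sufficiently small $\gamma>0$. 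Introduce $\varphi(x)=e^{\gamma\abs{x}^2}$ (or, to keep everything in $\mathbb C^2$ with controlled growth, a truncated version $\varphi_R$ that agrees with it on $\{\abs{x}\le R\}$ and is then handled by a localization/Fatou argument at the end). Apply It\^o's lemma to $\varphi(X_t)$ using \eqref{eq:14}: the generator acting on $\varphi$ is
\begin{equation*}
\mathcal L\varphi(x)=\Bl(2\gamma\,\theta(x)^Tx+\gamma\,\mathrm{tr}\,(\sigma(x)\sigma(x)^T)+2\gamma^2\,x^T\sigma(x)\sigma(x)^Tx\Br)e^{\gamma\abs{x}^2}.
\end{equation*}

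The key step is the drift estimate. By \eqref{eq:45} there exist $c_0>0$ and $R_0>0$ with $\theta(x)^Tx\le -c_0\abs{x}^2$ for $\abs{x}\ge R_0$, while $\sigma(x)\sigma(x)^T$ is bounded, say $\norm{\sigma(x)\sigma(x)^T}\le C_\sigma$; the linear-growth bound on $\theta$ controls the region $\abs{x}\le R_0$. Hence for $\gamma$ small enough that $2\gamma^2 C_\sigma<\gamma c_0$, i.e. $2\gamma C_\sigma<c_0$, the bracket in $\mathcal L\varphi$ is bounded above by $-c_1\abs{x}^2+c_2$ for suitable positive constants $c_1,c_2$, uniformly in $x$. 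Therefore $\mathcal L\varphi(x)\le(-c_1\abs{x}^2+c_2)e^{\gamma\abs{x}^2}\le -c_1'\varphi(x)+c_2'$ for yet other constants (using that $(-c_1 s+c_2)e^{\gamma s}$ is bounded above and tends to $-\infty$, so $(-c_1 s+c_2)e^{\gamma s}\le -c_1' e^{\gamma s}+c_2'$). This is the Lyapunov/Foster–Khasminskii inequality we need. The martingale part of the It\^o expansion, $\int_0^t 2\gamma\varphi(X_s)\,(\sigma(X_s)^TX_s)^T\,dW_s$, is a true martingale on $[0,t]$ for the truncated $\varphi_R$ (bounded integrand); taking expectations and using the differential inequality, $g(t):=\mathbf E\varphi_R(X_t)$ satisfies $g'(t)\le -c_1' g(t)+c_2'$, whence by Gronwall $g(t)\le g(0)e^{-c_1't}+c_2'/c_1'\le \mathbf Ee^{\gamma\abs{X_0}^2}+c_2'/c_1'$, a bound independent of $t$ and of $R$. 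Letting $R\to\infty$ and invoking Fatou's lemma gives $\mathbf Ee^{\gamma\abs{X_t}^2}\le \mathbf Ee^{\gamma\abs{X_0}^2}+c_2'/c_1'$ for all $t\ge0$, and the claim follows.

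The main obstacle is purely technical: justifying that the stochastic integral is a genuine (rather than merely local) martingale and that one may differentiate/apply Gronwall, given that $\varphi$ itself grows superpolynomially. I would handle this by the standard truncation just described — replace $\varphi$ by $\varphi_R\in\mathbb C^2_b$, derive the differential inequality for $\mathbf E\varphi_R(X_t)$ with constants $c_1',c_2'$ that can be chosen uniformly in $R$ (the bound on $\mathcal L\varphi$ was obtained pointwise and does not see the truncation except in a compact transition region, where an $R$-independent constant absorbs it), and then pass to the limit by monotone/Fatou convergence as $R\to\infty$. An alternative, if one prefers to avoid truncation, is a localization by stopping times $\tau_R=\inf\{t:\abs{X_t}\ge R\}$ combined with the bound $\mathbf E[\varphi(X_{t\wedge\tau_R})]$, again letting $R\to\infty$; the dissipativity makes $\mathbf P(\tau_R\le t)\to0$ fast enough that nothing is lost. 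Either route is routine once the Lyapunov inequality $\mathcal L\varphi\le -c_1'\varphi+c_2'$ is in hand, which is the real content and rests entirely on the strict negativity in \eqref{eq:45} beating the quadratic term $2\gamma^2 x^T\sigma\sigma^Tx$ for $\gamma$ small.
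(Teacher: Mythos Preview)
Your argument is correct and complete: the Lyapunov inequality $\mathcal L\varphi\le -c_1'\varphi+c_2'$ with $\varphi(x)=e^{\gamma\abs{x}^2}$ does follow from \eqref{eq:45} and the boundedness of $\sigma\sigma^T$ for $\gamma$ small, and the truncation/stopping-time localisation you outline is the right way to promote the local martingale to a true one.

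The paper takes a different route. Rather than applying It\^o's lemma to the exponential directly, it works with the polynomial moments $\mathbf E\abs{X_t}^{2i}$: from \eqref{eq:14} and the drift bound $\theta(x)^Tx\le -K_1\abs{x}^2+K_2$ it derives the differential inequality $d\mathbf E\abs{X_t}^{2i}\le -2iK_1\mathbf E\abs{X_t}^{2i}\,dt+2i^2K_3\mathbf E\abs{X_t}^{2i-2}\,dt$, which yields a recursion for $M_i(t)=\sup_{s\le t}\mathbf E\abs{X_s}^{2i}/i!$ of the form $M_i(t)\le \mathbf E\abs{X_0}^{2i}/i!+(K_3/K_1)M_{i-1}(t)$, and then sums over $i$ with weight $\gamma^i$ (geometric in the second term) to recover $\mathbf Ee^{\gamma\abs{X_t}^2}\le (1-\gamma K_3/K_1)^{-1}\mathbf Ee^{\gamma\abs{X_0}^2}$. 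Your approach is the standard Foster--Khasminskii argument and is more direct; the paper's moment method sidesteps the localisation issue for the exponential (polynomial moments are elementary to localise) at the cost of an extra summation step. Both yield the same conclusion with essentially the same dependence on the initial condition.
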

\begin{proof}
We prove that, if $\gamma>0$ and is small enough, then 
\begin{equation*}
  \limsup_{t\to\infty}\mathbf Ee^{\gamma\abs{X_t}^2}<\infty\,.
\end{equation*}
By (2.2),
 there exist $K_1>0$ and $K_2>0$ such that, for all $x\in\R^l$\,,
$\theta(x)^Tx\le -K_1\abs{x}^2+K_2$\,.
On applying It\^o's lemma to  (2.1) and recalling that 
$\sigma(x)\sigma(x)^T$ is bounded, we have that,
 for some $K_3>0$ and all $i\in\mathbb N$\,,
\begin{equation*}
  d\mathbf E\abs{X_t}^{2i}\le- 2i K_1\mathbf E
\abs{X_t}^{2i}\,dt+
2i^2K_3\mathbf E \abs{X_t}^{2i-2}\,dt\,.
\end{equation*} 
Hence,
\begin{equation*}
  \mathbf E\abs{X_t}^{2i}\le \mathbf E\abs{X_0}^{2i}e^{-2iK_1t}
+2i^2K_3e^{-2iK_1t}\int_0^t e^{2iK_1s}
\mathbf E \abs{X_s}^{2i-2}\,ds\,.
\end{equation*}
Let
\begin{equation*}
  M_i(t)=\frac{1}{i!}\,\sup_{s\le t}\mathbf E\abs{X_s}^{2i}\,.
\end{equation*}
We have that 
\begin{equation*}
M_i(t)\le\frac{1}{i!}\, \mathbf E\abs{X_0}^{2i}
+\frac{K_3}{K_1}M_{i-1}(t)\,.
\end{equation*}
Hence, if $\gamma K_3/K_1<1$\,, then
\begin{equation*}
  \sum_{i=0}^\infty\gamma^iM_i(t)\le \frac{1}{1-\gamma K_3/K_1}
\sum_{i=0}^\infty\frac{\gamma^i}{i!}\, \mathbf E\abs{X_0}^{2i}\,,
\end{equation*}
so,
\begin{equation*}
  \mathbf Ee^{\gamma\abs{X_t}^2}\le
\frac{1}{1-\gamma K_3/K_1}\,
\mathbf Ee^{\gamma\abs{X_0}^2}\,.
\end{equation*}
\end{proof}

 \bibliographystyle{plain}
\bibliography{large,idemp,puh,stoch,finance,sprav,optim,pde}
\end{document}